\documentclass[reqno,a4paper]{amsart}
	%%%%%%%%%%%%%%%%packages%%%%%%%%%%%%%%%%%%%%%%%%%%%%%%%
	\usepackage[english,activeacute]{babel}
	\usepackage{amssymb,amsmath,amsthm,amsfonts,mathrsfs,hyperref,mathtools,stmaryrd}
	\usepackage[font=footnotesize]{caption}
	\usepackage{enumitem}
	
	\usepackage{tikz}
	\usetikzlibrary{shapes}
	\usetikzlibrary{shapes.geometric}
	\usetikzlibrary[topaths]
	\usetikzlibrary{arrows}
	\usepackage{tikz-network} %
	
	\definecolor{verydarkgreen}  {cmyk}{0.65,0.00,0.95,0.40}
	\definecolor{darkgreen}      {cmyk}{0.90,0.00,0.90,0.10}
	\definecolor{lightgreen}     {cmyk}{0.50,0.00,1.00,0.00}
	\definecolor{verylightgreen} {cmyk}{0.25,0.00,0.75,0.00}
	
	%\usepackage{tikz}
	%\usetikzlibrary{intersections,shapes,trees,arrows,spy,positioning,decorations,arrows.meta,decorations.pathreplacing,patterns,calc}
	%\usepackage{tikz-cd}
	%\usepackage{tkz-euclide}
	%\usepackage{pgfplots}
	%packges Sebastian 
	%\usetkzobj{all} 
	\usepackage{xcolor}
	\usepackage[numbers,square,sort]{natbib}
	\usepackage{tabularx}
	\usepackage{enumitem}  %Proper enumerate 
	\usepackage[T1]{fontenc}
	\usepackage[foot]{amsaddr}
	\usepackage{url}
	\usepackage{verbatim}
	%\usepackage[latin1]{inputenc}
	%%%%%%%%%%%%%%%%%%%%%%%%%%%%%%%%%%%%%%%%%%%%%%%%%%%%%%%%%%%%%%%%%%%%%%%%%%%%%%%%%%%%%%%%%%%%%%%%
	%%%%%%%%%%%%%%%%%%%%%%%%%%Spacing and margins%%%%%%%%%%%%%%%%%%%%%%%%%%%%%%%%%%%%%%%%%%%%%%%%%%%
	%%%%%%%%%%%%%%%%%%%%%%%%%%%%%%%%%%%%%%%%%%%%%%%%%%%%%%%%%%%%%%%%%%%%%%%%%%%%%%%%%%%%%%%%%%%%%%%%

	\textwidth 160mm
	\textheight 240mm
	\topmargin       -10mm
	\evensidemargin  -1mm
	\oddsidemargin   -1mm
	\parindent 0pt
	%%%%%%%%%%%%%%%%%%%%%%%%%%%%%%%%%%%%%%%%%%%%%%%%%%%%%%%%%%%%%%%%%%%%%%%%%%%%%%%%%%%%%%%%%%%%%%%%
	%%%%%%%%%%%%%%%%%%%%%%%%%%theoremestyle%%%%%%%%%%%%%%%%%%%%%%%%%%%%%%%%%%%%%%%%%%%%%%%%%%%%%%%%%
	%%%%%%%%%%%%%%%%%%%%%%%%%%%%%%%%%%%%%%%%%%%%%%%%%%%%%%%%%%%%%%%%%%%%%%%%%%%%%%%%%%%%%%%%%%%%%%%%
	\setlength{\captionmargin}{12pt}
	\theoremstyle{plain}
	\newtheorem{theorem}{Theorem}[section]
	
	\newtheorem{lemma}[theorem]{Lemma}
	
	\newtheorem{corollary}[theorem]{Corollary}
	\theoremstyle{definition}
	
	\newtheorem{definition}[theorem]{Definition}

	\theoremstyle{remark}
	\newtheorem{remark}{Remark}[section]
	
	\numberwithin{equation}{section}
	%%%%%%%%%%%%%%%%%%%%%%%%%%%%%%%%%%%%%%%%%%%%%%%%%%%%%%%%%%%%%%%%%%%%%%%%%%%%%%%%%%%%%%%%%%%%%%%%
	%%%%%%%%%%%%%%%%%%%%%%%%%%Fernando's notation%%%%%%%%%%%%%%%%%%%%%%%%%%%%%%%%%%%%%%%%%%%%%%%%%%%
	%%%%%%%%%%%%%%%%%%%%%%%%%%%%%%%%%%%%%%%%%%%%%%%%%%%%%%%%%%%%%%%%%%%%%%%%%%%%%%%%%%%%%%%%%%%%%%%%
	
	%%%%%%%%%%%%%%%%%%%%%%%%%%%%%%%%%%%%%%%%%%%%%%%%%%%%%%%%%%%%%%%%%%%%%%%%%%%%%%
	%%%%%%%%%%%%%%%%%%%%%%%%%%Mathbb%%%%%%%%%%%%%%%%%%%%%%%%%%%%%%%%%%%%%%%%%%%%%%
	%%%%%%%%%%%%%%%%%%%%%%%%%%%%%%%%%%%%%%%%%%%%%%%%%%%%%%%%%%%%%%%%%%%%%%%%%%%%%%

	\newcommand{\Eb}  {{\mathbb E}}

	\newcommand{\Nb}  {{\mathbb N}}
	
	\newcommand{\Pb}  {{\mathbb P}}
	
	\newcommand{\Rb}  {{\mathbb R}}

	%%%%%%%%%%%%%%%%%%%%%%%%%%%%%%%%%%%%%%%%%%%%%%%%%%%%%%%%%%%%%%%%%%%%%%%%%%%%%%
	%%%%%%%%%%%%%%%%%%%%%%%%%%Mathcal%%%%%%%%%%%%%%%%%%%%%%%%%%%%%%%%%%%%%%%%%%%%%
	%%%%%%%%%%%%%%%%%%%%%%%%%%%%%%%%%%%%%%%%%%%%%%%%%%%%%%%%%%%%%%%%%%%%%%%%%%%%%%
	
	%large curly letters

	\newcommand{\Ns} {{\mathcal N}}
	
	\newcommand{\Ps} {{\mathcal P}}

	\newcommand{\Us} {{\mathcal U}}

	%small curly letters
	%\newcommand{\cs}{{\mathcal c}} %there is no mathcal c, use mathfrak c instead
	\newcommand{\cs}{{\mathfrak c}}
	
	%%%%%%%%%%%%%%%%%%%%%%%%%%%%%%%%%%%%%%%%%%%%%%%%%%%%%%%%%%%%%%%%%%%%%%%%%%%%%%
	%%%%%%%%%%%%%%%%%%%%%%%%%%Miscelaneous%%%%%%%%%%%%%%%%%%%%%%%%%%%%%%%%%%%%%%%%
	%%%%%%%%%%%%%%%%%%%%%%%%%%%%%%%%%%%%%%%%%%%%%%%%%%%%%%%%%%%%%%%%%%%%%%%%%%%%%%
	\newcommand{\dd}{{\, \rm d}}
	\newcommand{\ee}{{\rm e}}
	\newcommand{\bee}{{\mathbf{e}}}
	
	\newcommand{\1}{{\bf 1}}
	\newcommand{\defeq}{\mathrel{\mathop:}=}
	\newcommand{\eqdef}{=\mathrel{\mathop:}}
	\newcommand{\EE}[1]{\mathbb{E} \left[#1\right]}
	\newcommand{\PP}[1]{\mathbb{P} \left[#1\right]}
	\newcommand{\Var}[1]{\textnormal{Var} \left[#1\right]}
	
	\newcommand{\tC}{\texttt{C}}
	\newcommand{\tla}{\boldsymbol{\lambda}}
	%%%%%%%%%%%%%%%%%%%%%%%%%%%%%%%%%%%%%%%%%%%%%%%%%%%%%%%%%%%%%%%%%%%%%%%%%%%%%%
	%%%%%%%%%%%%%%%%%%%%%%%%%%Editing%%%%%%%%%%%%%%%%%%%%%%%%%%%%%%%%%%%%%%%%%%%%%
	%%%%%%%%%%%%%%%%%%%%%%%%%%%%%%%%%%%%%%%%%%%%%%%%%%%%%%%%%%%%%%%%%%%%%%%%%%%%%%
	%\newcommand{\add}[1]{\color{blue}{#1} \color{black}}
	%\newcommand{\fer}[1]{\color{red}Fernando: {#1} \color{black}}
	%\newcommand{\fred}[1]{\color{blue}Frederic: {#1} \color{black}}
	%\newcommand{\flo}[1]{\color{orange}Florin: {#1} \color{black}}
	%\newcommand{\todo}[1]{\color{red}{#1}\color{black}}

	%\newcommand{\red}[1]{\textcolor{red}{#1}}
	%\newcommand{\fer}[1]{\color{red}Fernando: {#1} \color{black}}
	%\newcommand{\fred}[1]{{\color{blue}Frederic: {#1} }}
	%\newcommand{\flo}[1]{\color{orange}Florin: {#1} \color{black}}
	
	%%%%%%%%%%%%%%%%%%%%%%%%%%%%%%%%%%%%%%%%%%%%%%%%%%%%%%%%%%%%%%%%%%%%%%%%%%%%%%%%%%%%%%%%%%%%%%%%
	%%%%%%%%%%%%%%%%%%%%%%%%%%Title, address, etc.%%%%%%%%%%%%%%%%%%%%%%%%%%%%%%%%%%%%%%%%%%%%%%%%%%
	%%%%%%%%%%%%%%%%%%%%%%%%%%%%%%%%%%%%%%%%%%%%%%%%%%%%%%%%%%%%%%%%%%%%%%%%%%%%%%%%%%%%%%%%%%%%%%%%

	%Commands for changing the appearance of u,x, and so on:
	
	\newcommand{\bU}{\mathbf{U}}
	\newcommand{\bx}{\mathbf{x}}
	\newcommand{\bX}{\mathbf{X}}
	\newcommand{\bc}{\mathbf{c}}
	\newcommand{\bC}{\mathbf{C}}
	\newcommand{\bM}{\mathbf{M}}

	\newcommand{\bl}{\boldsymbol \ell}

	%%%%%%%%%%%%%%%%%%%%%%%%%%%%%%%%%%%%%%%%%%%%%%%%%%%%%%%
	
	\date{\today}
	\title[]{On the block size spectrum of a class of exchangeable dynamic random graphs}
	\author{Frederic Alberti}\address{Frederic Alberti: fralbert@uni-mainz.de, Johannes Gutenberg Universität Mainz, Institut für Mathematik,  Germany} \author{Florin Boenkost} \address{Florin Boenkost: \,florin.boenkost@univie.ac.at, Universität Wien, Fakultät für Mathematik, Austria}  \author{Fernando Cordero} \address{Fernando Cordero: fernando.cordero@boku.ac.at, BOKU University, Institute of Mathematics, Department of Natural Sciences and Sustainable Ressources, Austria}

	\begin{document}
		\begin{abstract}
			
			In this work we introduce the dynamic $\Theta$-random graph and the associated $\Theta$-coalescent with momentum. Dynamic $\Theta$-random graphs are a subclass of exchangeable and consistent random graph processes, parametrised by a measure $\Theta$ on $[0,1]\times (0,1]$, inspired by the classic $\Lambda$-coalescent from mathematical population genetics. The $\Theta$-coalescent with momentum accounts for the small connected components of this graph; in contrast to the underlying random graph it is exchangeable but not consistent. Our main results specialise on the case where $\Theta$ is the product of a beta measure and a Dirac mass at $1$. We prove a dynamic law of large numbers for the block size spectrum, which tracks the numbers of blocks containing $1,...,d$ elements. On top of that, we provide a functional limit theorem for the fluctuations. The limit process satisfies a stochastic differential equation of Ornstein-Uhlenbeck type.\\
			
			Keywords: \emph{random graphs, coalescent process, functional limit theorem, exchangeability, Poisson representation, Ornstein-Uhlenbeck type process.}\\
			
			MSC 2020 classification: \emph{Primary: 60J90, Secondary: 05C80, 60F17, 60F15.}        
		\end{abstract}
		
		\maketitle	
		
		\section{Introduction}
		
		The study of large random graphs and networks is an active and rapidly developing area of research~\cite{vdHofstadt2016,Durrett2007} and for example \cite{Berestycki2018,Borgs2019}. In addition to their rich mathematical structure, random networks find applications in a large number of fields such as biology~\cite{Koutrouli2020}, sociology~\cite{Feld1991}, neuroscience~\cite{Bressloff2014}, computer science~\cite{BrinPage1998}, and more recently and extensively in the context of neural networks \cite{Wu2021}.
		
		In mathematical population genetics, a particular kind of random tree structures, so-called 
		\emph{coalescent trees} play a crucial role in the description of ancestral structures. They are often expressed as Markov processes on the set of partitions of integers between $1$ and $n$, where $n$ represents the sample size. At any given time $t$, a partition
		$\Pi_t$ of $\{1,\ldots,n\}$ represents the ancestral relationships among genes in the sample; two samples belong to the same block, if and only if they have a common ancestor at (backward) time $t$. 
		
		Particular attention has been paid to coalescent processes that are \emph{exchangeable} in the sense that the underlying population is assumed to be sufficiently homogeneous, so that its evolution is invariant with regards to how the sample is labelled. Moreover, coalescent processes are often assumed to be \emph{consistent} in the sense that picking a smaller subsample of a larger sample is assumed to yield the same observation as starting with a smaller sample to begin with.
		A complete classification of coalescent processes that are consistent and exchangeable and do not exhibit multiple mergers has been carried out in~\cite{Sagitov1999,Pitman1999} 
		and is one of the cornerstones of coalescent theory~\cite{Wakeley2016}. Such a coalescent process is uniquely characterised by a finite measure $\Lambda$ on $[0,1]$. For $u > 0$, we encounter 
		a 
		\emph{$u$-merger}
		at rate $\Lambda(\dd u)/u^2$ in which
		each block is coloured independently with probability $u$. Subsequently, all coloured blocks merge into a single block. In addition, each pair of blocks merges independently at rate $\Lambda(\{ 0 \})$.
		Since then, many of their properties have been investigated, such as the phenomenon of coming down from infinity and asymptotic block counts, in particular in the special case where $\Lambda$ is the beta
		distribution~\cite{Limic2015,Miller2023,Schweinsberg2000CDI}.
		
		More generally, a similar classification of exchangeable and consistent graph-valued (Feller) processes has been carried out by
		H.~Crane~\cite{Crane2017}. He showed that such processes may only perform the following three types of transitions. First, they may perform \emph{edgewise} transitions at which edges (dis)appear independently at fixed rates. Secondly, at \emph{vertexwise} transitions, edges adjacent to a fixed vertex appear or disappear independently. Finally, they may perform \emph{global} transitions, which amount to the addition and removal of dense subgraphs, corresponding roughly speaking, to random graph limits or graphons~\cite{DiaconisJanson2008}.
		
		Global transitions, in particular, render exchangeable and consistent Feller processes on graphs an extremely rich class of stochastic processes, which makes its systematic exploration a challenging task.
		However, the special case where only edgewise transitions are performed and, moreover, edges only appear but do not disappear, has been well studied; this is the traditional setting of the dynamic Erd\H{o}s-Rényi model. It was shown that, after letting the total
		number of vertices tend to infinity, slowing down time and normalising appropriately, the evolution of the frequencies of small connected components is captured by a deterministic ordinary differential equation. This equation is known as \emph{Smoluchowski's coagulation equation}~\cite{Fournier2004}. Moreover, it was shown that the structure of connected components can be described by the excursions of a Brownian motion with drift; see~\cite{Broutin2015}, which generalises an earlier result by D.~Aldous~\cite{Aldous1997} on the static Erd\H{o}s-Rényi model.
		
		%In this paper, we propose what we feel to be a natural subclass of exchangeable and consistent Feller processes on random graphs which generalise the dynamic Erd\H{o}s-Rényi model. 
		Inspired by the aforementioned $\Lambda$-coalescents from mathematical biology, we consider the natural subclass of exchangeable consistent dynamic random graphs that exhibits only edgewise and global transitions, but no vertexwise transitions. Moreover, like the dynamic Erd\H{o}s-Rényi graph, our model will be monotone in the sense that edges are only added, not deleted. Furthermore, we only allow for those global transitions that consist of the superposition of Bernoulli-random graphs. Put simply, these are transitions in which we colour each vertex independently with a certain probability $u$, and subsequently connect each pair of selected vertices with the same random probability $q$. This also has a natural interpretation in terms of the evolution of social networks; a global transition corresponds to a large meeting in which each individual participates with probability $u$ and each pair of participants has a certain probability $q$ of subsequently becoming friends.
		
		For our main results, we will furthermore take $\Theta$  of the form $\Lambda \otimes \delta_1^{}$, where $\Lambda$ is the Beta distribution,  which is in line with the tradition in population genetics and allows for nice and explicit calculations. We expect our results to also hold in (slightly) greater generality. Similar to~\cite{Miller2023}, we derive a dynamic law of large numbers for the number of connected components containing $1,...,d$ elements, see Remark \ref{rem:Theorem2.4}. We also prove a non-standard functional limit theorem for the fluctuations around this deterministic limit. Our results mirror those of~\cite{Limic2015} on the fluctuations of the total number of blocks in the $\Lambda$-coalescent when $\Lambda$ looks like a Beta distribution around the origin. More details can be found in Remark \ref{rem:limic}. Although our results are only stated for a subclass of $\Theta$-random graphs, the introduction of the more general class allows us to relate our results to the existing literature.
		
		The rest of the paper is organised as follows. 
		After introducing the model in full generality, we briefly discuss a possible application in the modelling of social bubbles. Then, we restrict to the setting of $\Theta$ being of the form
		$\Lambda \times \delta_1$, with $\Lambda$ being the beta measure. 
		Our main results are stated in Subsection~\ref{subsec:results} and we start Section~\ref{sect:proofs} by adapting the Poisson integral representation from~\cite{Limic2015} to our setting, which will be the central tool for our proof. These kinds of representation are reminiscent of those used in \cite{Kurtz1970} or \cite{EthierKurtz1986}. Right after, we give an overview over the structure of the proof before diving into the technical details. Finally, in Section~\ref{sect:calculus}, we recall some useful results from real analysis that are used elsewhere in the text.

		\subsection*{Notation} Before proceeding with the main text, let us fix some notation. In general we will use bold letters to denote vectors. Given $d \in \Nb$,
		we introduce three norms on $\Rb^d$, namely 
		\begin{equation*}
			|\bx| \defeq \sum_{i = 1}^d |x_i^{}| , \quad \|\bx  \|_2 \defeq \left( \sum_{i=1}^d x_i^2 \right)^{\frac{1}{2}}, \quad\textnormal{and}\quad
			\| \bx \| \defeq \sum_{i = 1}^d i |x_i^{} |.
		\end{equation*}
		We also write
		\begin{equation*}
			\Ps_{2+}(i) \defeq \big \{ \bl \in \Nb^d : \| \bl \| = i, \ell_i^{} = 0    \big \},
		\end{equation*}
		for the set of \emph{integer partitions} of $i$ into at least two parts. For any $\bc \in \frac{1}{n} \Rb^d$ and $\bl \in \Nb^d$, we make use of the abbreviation
		\begin{align*}
			\binom{n \bc}{\bl}:= \prod_{j=1}^d \binom{n c_j}{\ell_j}.
		\end{align*}
		Furthermore, we denote by $\bee_i^{}$ the 
		$i$-th canonical unit vector in 
		$\Rb^d$ so that 
		$\bc = \sum_{i=1}^d c_i^{} \bee_i^{}$.
		
		\section{Model and main results}
		
		\subsection{Two classes of exchangeable random graph processes}
		\begin{definition}\label{def:thetamuN}
			[$\Theta$-dynamic random graphs]
			Let $\Theta$ be a finite nonnegative measure on $[0,1]\times(0,1]$ and 
			\begin{align} \label{eq:smalltheta}
				\theta \defeq \int_{ \{ 0 \} \times [0,1]  } q   \, \Theta(\dd u, \dd q).
			\end{align}
			Let $N$ be a Poisson point process on $[0,\infty) \times (0,1]^2$ with intensity
			\begin{equation} \label{poissonintensity}
				\mu(\! \dd t, \dd u, \dd q) \defeq \dd t \frac{\Theta(\! \dd u, \dd q)}{u^2} \1_{ u >0}.
			\end{equation}
			The $\Theta$-dynamic random graph on $n$ vertices is the Markov process $G^n \coloneqq(G^n_t)_{t \geqslant 0}^{}$  on  the set of (undirected) graphs with vertices $[n] \defeq \{1,\ldots,n\}$, which evolves as follows.
			For each atom $(t,u,q)$ of $N$, $G^n$ performs at time $t$ a $(u,q)$-merger. This means that each vertex is coloured independently with probability $u$. Then, independently for each pair of coloured vertices that is not already joined by an edge, we draw an edge between them with probability $q$. In addition and independently of $(u,q)$-mergers, an edge appears independently at rate $\theta$ between each pair $i$ and $j$ that is not already joined by an edge.
		\end{definition}
		Note that the parametrisation in terms of a measure $\Theta$ is not unique. For instance, different $\Theta$ can yield the same pair-coalescence rate $\theta$ in
		Eq.~\eqref{eq:smalltheta}. An example of a transition of $G^n$ is given in Figure \ref{fig:1}. Although the Poisson point process $N$ can have an infinite number of points in sets of the form $[0,t)\times(0,1]^2$, a standard argument shows that, almost surely, only a finite number of them do actually result in a nontrivial transition in $G^n$. Therefore, the previous definition does indeed define a Markov process with càdlàg paths on the set of undirected graphs with $n$ vertices, if the latter is endowed with the discrete topology.
		
		The $\Theta$-dynamic random graph combines the classic dynamic Erd\H{o}s-Rényi (or Bernoulli) random graph with the possibility of large (dense) cliques emerging all at once. Note that this defines a consistent family $(G^n)_{n\in\Nb}$ of exchangeable dynamic random graphs, and thus, a Feller process taking values in the graphs with vertex set $\Nb$~\cite{Crane2017}. In general, $G_0^n$ might be any graph with $n$ vertices, however in our case we usually consider $G_0^n=([n],\varnothing)$.

		\begin{figure}[h]
			\scalebox{0.8}{
				\begin{minipage}[t]{0.25\textwidth}
					\begin{center}
						\begin{tikzpicture}[scale=0.45]
							\node[shade,shading=ball,circle,ball color=black!10!white, draw, minimum size=0.3cm, inner sep=0pt,opacity=1] (A) at ({360/16 * 0}:3cm) {\tiny{$5$}};
							\node[shade,shading=ball,circle,ball color=black!10!white,circle, draw, minimum size=0.3cm, inner sep=0pt,opacity=1] (B) at ({360/16 * 1}:3cm) {\tiny{$4$}};
							\node[shade,shading=ball,circle,ball color=green!60!white,circle, draw, minimum size=0.3cm, inner sep=0pt,opacity=1] (C) at ({360/16 * 2}:3cm) {\tiny{$3$}};
							\node[shade,shading=ball,circle,ball color=black!10!white,circle, draw, minimum size=0.3cm, inner sep=0pt,opacity=1] (D)at ({360/16 * 3}:3cm) {\tiny{$2$}};
							\node[shade,shading=ball,circle,ball color=black!10!white,circle, draw, minimum size=0.3cm, inner sep=0pt,opacity=1] (E) at ({360/16 * 4}:3cm) {\tiny{$1$}} ;
							\node[shade,shading=ball,circle,ball color=green!60!white,circle, draw, minimum size=0.3cm, inner sep=0pt] (F) at ({360/16 * 5}:3cm) {\tiny{$16$}};
							\node[shade,shading=ball,circle,ball color=black!10!white,circle, draw, minimum size=0.3cm, inner sep=0pt,opacity=1] (G) at ({360/16 * 6}:3cm) {\tiny{$15$}};
							\node[shade,shading=ball,circle,ball color=black!10!white,circle, draw, minimum size=0.3cm, inner sep=0pt,opacity=1] (H) at ({360/16 * 7}:3cm) {\tiny{$14$}};
							\node[shade,shading=ball,circle,ball color=black!10!white,circle, draw, minimum size=0.3cm, inner sep=0pt,opacity=1] (I) at ({360/16 * 8}:3cm) {\tiny{$13$}};
							\node[shade,shading=ball,circle,ball color=green!60!white,circle, draw, minimum size=0.3cm, inner sep=0pt] (J) at ({360/16 * 9}:3cm) {\tiny{$12$}};
							\node[shade,shading=ball,circle,ball color=green!60!white,circle, draw, minimum size=0.3cm, inner sep=0pt] (K) at ({360/16 * 10}:3cm) {\tiny{$11$}};
							\node[shade,shading=ball,circle,ball color=black!10!white,circle, draw, minimum size=0.3cm, inner sep=0pt,opacity=1] (L) at ({360/16 * 11}:3cm) {\tiny{$10$}};
							\node[shade,shading=ball,circle,ball color=black!10!white,circle, draw, minimum size=0.3cm, inner sep=0pt,opacity=1] (M) at ({360/16 * 12}:3cm){\tiny{$9$}};
							\node[shade,shading=ball,circle,ball color=black!10!white,circle, draw, minimum size=0.3cm, inner sep=0pt,opacity=1] (N) at ({360/16 * 13}:3cm){\tiny{$8$}};
							\node[shade,shading=ball,circle,ball color=black!10!white,circle, draw, minimum size=0.3cm, inner sep=0pt,opacity=1] (O)at ({360/16 * 14}:3cm) {\tiny{$7$}};
							\node[shade,shading=ball,circle,ball color=black!10!white,circle, draw, minimum size=0.3cm, inner sep=0pt,opacity=1] (P) at ({360/16 * 15}:3cm) {\tiny{$6$}};
							
							\Edge[bend=25,opacity=0.5](A)(C);
							\Edge[bend=-8,opacity=0.5](A)(P);
							\Edge[bend=-25,opacity=0.5](C)(P);
							
							\Edge[bend=8,opacity=0.5](F)(G);
							\Edge[bend=8,opacity=0.5](G)(H);
							\Edge[bend=8,opacity=0.5](H)(M);
							\Edge[bend=-8,opacity=0.5](M)(F);
							\Edge[bend=-8,opacity=0.5](M)(G);
							\Edge[bend=-25,opacity=0.5](H)(F);
							
							\Edge[bend=-8,opacity=0.5](D)(I); 
						\end{tikzpicture}
				\end{center}\end{minipage}
				\begin{minipage}[t]{0.1\textwidth}
					\begin{center}
						\begin{tikzpicture}
							\draw [line width=5pt, -stealth, white!80!black] (0,1) -- (1,1);
							\node[] (A) at (0,0) {};
						\end{tikzpicture}
					\end{center}
				\end{minipage}\begin{minipage}[t]{0.25\textwidth}
					\begin{center}
						\begin{tikzpicture}[scale=0.45]
							\node[shade,shading=ball,circle,ball color=black!10!white, draw, minimum size=0.3cm, inner sep=0pt,opacity=1] (A) at ({360/16 * 0}:3cm) {\tiny{$5$}};
							\node[shade,shading=ball,circle,ball color=green!60!white,circle, draw, minimum size=0.3cm, inner sep=0pt,opacity=1] (B) at ({360/16 * 1}:3cm) {\tiny{$4$}};
							\node[shade,shading=ball,circle,ball color=black!60!white,circle, draw, minimum size=0.3cm, inner sep=0pt,opacity=1] (C) at ({360/16 * 2}:3cm) {\tiny{$3$}};
							\node[shade,shading=ball,circle,ball color=black!10!white,circle, draw, minimum size=0.3cm, inner sep=0pt,opacity=1] (D)at ({360/16 * 3}:3cm) {\tiny{$2$}};
							\node[shade,shading=ball,circle,ball color=black!10!white,circle, draw, minimum size=0.3cm, inner sep=0pt,opacity=1] (E) at ({360/16 * 4}:3cm) {\tiny{$1$}} ;
							\node[shade,shading=ball,circle,ball color=black!60!white,circle, draw, minimum size=0.3cm, inner sep=0pt] (F) at ({360/16 * 5}:3cm) {\tiny{$16$}};
							\node[shade,shading=ball,circle,ball color=black!10!white,circle, draw, minimum size=0.3cm, inner sep=0pt,opacity=1] (G) at ({360/16 * 6}:3cm) {\tiny{$15$}};
							\node[shade,shading=ball,circle,ball color=black!10!white,circle, draw, minimum size=0.3cm, inner sep=0pt,opacity=1] (H) at ({360/16 * 7}:3cm) {\tiny{$14$}};
							\node[shade,shading=ball,circle,ball color=green!60!white,circle, draw, minimum size=0.3cm, inner sep=0pt,opacity=1] (I) at ({360/16 * 8}:3cm) {\tiny{$13$}};
							\node[shade,shading=ball,circle,ball color=black!60!white,circle, draw, minimum size=0.3cm, inner sep=0pt] (J) at ({360/16 * 9}:3cm) {\tiny{$12$}};
							\node[shade,shading=ball,circle,ball color=black!60!white,circle, draw, minimum size=0.3cm, inner sep=0pt] (K) at ({360/16 * 10}:3cm) {\tiny{$11$}};
							\node[shade,shading=ball,circle,ball color=black!10!white,circle, draw, minimum size=0.3cm, inner sep=0pt,opacity=1] (L) at ({360/16 * 11}:3cm) {\tiny{$10$}};
							\node[shade,shading=ball,circle,ball color=black!10!white,circle, draw, minimum size=0.3cm, inner sep=0pt,opacity=1] (M) at ({360/16 * 12}:3cm){\tiny{$9$}};
							\node[shade,shading=ball,circle,ball color=green!60!white,circle, draw, minimum size=0.3cm, inner sep=0pt,opacity=1] (N) at ({360/16 * 13}:3cm){\tiny{$8$}};
							\node[shade,shading=ball,circle,ball color=black!10!white,circle, draw, minimum size=0.3cm, inner sep=0pt,opacity=1] (O)at ({360/16 * 14}:3cm) {\tiny{$7$}};
							\node[shade,shading=ball,circle,ball color=black!10!white,circle, draw, minimum size=0.3cm, inner sep=0pt,opacity=1] (P) at ({360/16 * 15}:3cm) {\tiny{$6$}};
							
							\Edge[bend=25,opacity=0.5](A)(C);
							\Edge[bend=-8,opacity=0.5](A)(P);
							\Edge[bend=-25,opacity=0.5](C)(P);
							
							\Edge[bend=8,opacity=0.5](F)(G);
							\Edge[bend=8,opacity=0.5](G)(H);
							\Edge[bend=8,opacity=0.5](H)(M);
							\Edge[bend=-8,opacity=0.5](M)(F);
							\Edge[bend=-8,opacity=0.5](M)(G);
							\Edge[bend=-25,opacity=0.5](H)(F);
							
							\Edge[bend=-8,opacity=0.5](D)(I); 
							
							\Edge[bend=-8](K)(J);
							\Edge[bend=-8](K)(C);
							\Edge[bend=-8](K)(F); 
							\Edge[bend=-8](J)(F); 
							\Edge[bend=-8](F)(C);
							\Edge[bend=-8](J)(C);         
						\end{tikzpicture}
					\end{center}
					
				\end{minipage}\begin{minipage}[t]{0.1\textwidth}
					\begin{center}
						\begin{tikzpicture}
							\draw [line width=5pt, -stealth, white!80!black] (0,1) -- (1,1);
							\node[] (A) at (0,0) {};
						\end{tikzpicture}
					\end{center}
				\end{minipage}\begin{minipage}[t]{0.25\textwidth}
					\begin{center}
						\begin{tikzpicture}[scale=0.45]
							\node[shade,shading=ball,circle,ball color=black!10!white, draw, minimum size=0.3cm, inner sep=0pt,opacity=1] (A) at ({360/16 * 0}:3cm) {\tiny{$5$}};
							\node[shade,shading=ball,circle,ball color=black!60!white,circle, draw, minimum size=0.3cm, inner sep=0pt,opacity=1] (B) at ({360/16 * 1}:3cm) {\tiny{$4$}};
							\node[shade,shading=ball,circle,ball color=black!10!white,circle, draw, minimum size=0.3cm, inner sep=0pt,opacity=1] (C) at ({360/16 * 2}:3cm) {\tiny{$3$}};
							\node[shade,shading=ball,circle,ball color=black!10!white,circle, draw, minimum size=0.3cm, inner sep=0pt,opacity=1] (D)at ({360/16 * 3}:3cm) {\tiny{$2$}};
							\node[shade,shading=ball,circle,ball color=black!10!white,circle, draw, minimum size=0.3cm, inner sep=0pt,opacity=1] (E) at ({360/16 * 4}:3cm) {\tiny{$1$}} ;
							\node[shade,shading=ball,circle,ball color=black!10!white,circle, draw, minimum size=0.3cm, inner sep=0pt,opacity=1] (F) at ({360/16 * 5}:3cm) {\tiny{$16$}};
							\node[shade,shading=ball,circle,ball color=black!10!white,circle, draw, minimum size=0.3cm, inner sep=0pt,opacity=1] (G) at ({360/16 * 6}:3cm) {\tiny{$15$}};
							\node[shade,shading=ball,circle,ball color=black!10!white,circle, draw, minimum size=0.3cm, inner sep=0pt,opacity=1] (H) at ({360/16 * 7}:3cm) {\tiny{$14$}};
							\node[shade,shading=ball,circle,ball color=black!60!white,circle, draw, minimum size=0.3cm, inner sep=0pt,opacity=1] (I) at ({360/16 * 8}:3cm) {\tiny{$13$}};
							\node[shade,shading=ball,circle,ball color=black!10!white,circle, draw, minimum size=0.3cm, inner sep=0pt,opacity=1] (J) at ({360/16 * 9}:3cm) {\tiny{$12$}};
							\node[shade,shading=ball,circle,ball color=black!10!white,circle, draw, minimum size=0.3cm, inner sep=0pt,opacity=1] (K) at ({360/16 * 10}:3cm) {\tiny{$11$}};
							\node[shade,shading=ball,circle,ball color=black!10!white,circle, draw, minimum size=0.3cm, inner sep=0pt,opacity=1] (L) at ({360/16 * 11}:3cm) {\tiny{$10$}};
							\node[shade,shading=ball,circle,ball color=black!10!white,circle, draw, minimum size=0.3cm, inner sep=0pt,opacity=1] (M) at ({360/16 * 12}:3cm){\tiny{$9$}};
							\node[shade,shading=ball,circle,ball color=black!60!white,circle, draw, minimum size=0.3cm, inner sep=0pt,opacity=1] (N) at ({360/16 * 13}:3cm){\tiny{$8$}};
							\node[shade,shading=ball,circle,ball color=black!10!white,circle, draw, minimum size=0.3cm, inner sep=0pt,opacity=1] (O)at ({360/16 * 14}:3cm) {\tiny{$7$}};
							\node[shade,shading=ball,circle,ball color=black!10!white,circle, draw, minimum size=0.3cm, inner sep=0pt,opacity=1] (P) at ({360/16 * 15}:3cm) {\tiny{$6$}};
							
							\Edge[bend=25,opacity=0.5](A)(C);
							\Edge[bend=-8,opacity=0.5](A)(P);
							\Edge[bend=-25,opacity=0.5](C)(P);
							\Edge[bend=8,opacity=0.5](F)(G);
							\Edge[bend=8,opacity=0.5](G)(H);
							\Edge[bend=8,opacity=0.5](H)(M);
							\Edge[bend=-8,opacity=0.5](M)(F);
							\Edge[bend=-8,opacity=0.5](M)(G);
							\Edge[bend=-25,opacity=0.5](H)(F);
							\Edge[bend=-8,opacity=0.5](D)(I); 
							\Edge[bend=-8,opacity=0.5](K)(J);
							\Edge[bend=-8,opacity=0.5](K)(C);
							\Edge[bend=-8,opacity=0.5](K)(F); 
							\Edge[bend=-8,opacity=0.5](J)(F); 
							\Edge[bend=-8,opacity=0.5](F)(C);
							\Edge[bend=-8,opacity=0.5](J)(C);           
							\Edge[bend=-8,opacity=1](B)(I); 
							\Edge[bend=-8,opacity=1](B)(N);
							\Edge[bend=-8,opacity=1](N)(I);    
							
						\end{tikzpicture}
					\end{center}
					
				\end{minipage}
				
			}
			\vspace{.9cm}

			\scalebox{0.8}{ 
				\begin{minipage}[t]{0.25\textwidth}
					\begin{center}
						\begin{tikzpicture}[scale=0.45]
							\node[shade,shading=ball,circle,ball color=black!10!white, draw, minimum size=0.3cm, inner sep=0pt,opacity=1] (A) at ({360/16 * 0}:3cm) {\tiny{$5$}};
							\node[shade,shading=ball,circle,ball color=black!10!white,circle, draw, minimum size=0.3cm, inner sep=0pt,opacity=1] (B) at ({360/16 * 1}:3cm) {\tiny{$4$}};
							\node[shade,shading=ball,circle,ball color=green!60!white,circle, draw, minimum size=0.3cm, inner sep=0pt,opacity=1] (C) at ({360/16 * 2}:3cm) {\tiny{$3$}};
							\node[shade,shading=ball,circle,ball color=black!10!white,circle, draw, minimum size=0.3cm, inner sep=0pt,opacity=1] (D)at ({360/16 * 3}:3cm) {\tiny{$2$}};
							\node[shade,shading=ball,circle,ball color=black!10!white,circle, draw, minimum size=0.3cm, inner sep=0pt,opacity=1] (E) at ({360/16 * 4}:3cm) {\tiny{$1$}} ;
							\node[shade,shading=ball,circle,ball color=green!60!white,circle, draw, minimum size=0.3cm, inner sep=0pt] (F) at ({360/16 * 5}:3cm) {\tiny{$16$}};
							\node[shade,shading=ball,circle,ball color=black!10!white,circle, draw, minimum size=0.3cm, inner sep=0pt,opacity=1] (G) at ({360/16 * 6}:3cm) {\tiny{$15$}};
							\node[shade,shading=ball,circle,ball color=black!10!white,circle, draw, minimum size=0.3cm, inner sep=0pt,opacity=1] (H) at ({360/16 * 7}:3cm) {\tiny{$14$}};
							\node[shade,shading=ball,circle,ball color=black!10!white,circle, draw, minimum size=0.3cm, inner sep=0pt,opacity=1] (I) at ({360/16 * 8}:3cm) {\tiny{$13$}};
							\node[shade,shading=ball,circle,ball color=green!60!white,circle, draw, minimum size=0.3cm, inner sep=0pt] (J) at ({360/16 * 9}:3cm) {\tiny{$12$}};
							\node[shade,shading=ball,circle,ball color=green!60!white,circle, draw, minimum size=0.3cm, inner sep=0pt] (K) at ({360/16 * 10}:3cm) {\tiny{$11$}};
							\node[shade,shading=ball,circle,ball color=black!10!white,circle, draw, minimum size=0.3cm, inner sep=0pt,opacity=1] (L) at ({360/16 * 11}:3cm) {\tiny{$10$}};
							\node[shade,shading=ball,circle,ball color=black!10!white,circle, draw, minimum size=0.3cm, inner sep=0pt,opacity=1] (M) at ({360/16 * 12}:3cm){\tiny{$9$}};
							\node[shade,shading=ball,circle,ball color=black!10!white,circle, draw, minimum size=0.3cm, inner sep=0pt,opacity=1] (N) at ({360/16 * 13}:3cm){\tiny{$8$}};
							\node[shade,shading=ball,circle,ball color=black!10!white,circle, draw, minimum size=0.3cm, inner sep=0pt,opacity=1] (O)at ({360/16 * 14}:3cm) {\tiny{$7$}};
							\node[shade,shading=ball,circle,ball color=black!10!white,circle, draw, minimum size=0.3cm, inner sep=0pt,opacity=1] (P) at ({360/16 * 15}:3cm) {\tiny{$6$}};
							
							\Edge[bend=25,opacity=0.5](A)(C);
							\Edge[bend=-8,opacity=0.5](A)(P);
							%\Edge[bend=-25,opacity=0.5](C)(P);
							
							%\Edge[bend=8,opacity=0.5](F)(G);
							\Edge[bend=8,opacity=0.5](G)(H);
							%\Edge[bend=8,opacity=0.5](H)(M);
							\Edge[bend=-8,opacity=0.5](M)(F);
							%\Edge[bend=-8,opacity=0.5](M)(G);
							\Edge[bend=-25,opacity=0.5](H)(F);
							
							\Edge[bend=-8,opacity=0.5](D)(I); 
						\end{tikzpicture}
				\end{center}\end{minipage}\begin{minipage}[t]{0.1\textwidth}
					\begin{center}
						\begin{tikzpicture}
							\draw [line width=5pt, -stealth, white!80!black] (0,1) -- (1,1);
							\node[] (A) at (0,0) {};
						\end{tikzpicture}
					\end{center}
				\end{minipage}\begin{minipage}[t]{0.25\textwidth}
					\begin{center}
						\begin{tikzpicture}[scale=0.45]
							\node[shade,shading=ball,circle,ball color=black!10!white, draw, minimum size=0.3cm, inner sep=0pt,opacity=1] (A) at ({360/16 * 0}:3cm) {\tiny{$5$}};
							\node[shade,shading=ball,circle,ball color=green!60!white,circle, draw, minimum size=0.3cm, inner sep=0pt,opacity=1] (B) at ({360/16 * 1}:3cm) {\tiny{$4$}};
							\node[shade,shading=ball,circle,ball color=black!60!white,circle, draw, minimum size=0.3cm, inner sep=0pt,opacity=1] (C) at ({360/16 * 2}:3cm) {\tiny{$3$}};
							\node[shade,shading=ball,circle,ball color=black!10!white,circle, draw, minimum size=0.3cm, inner sep=0pt,opacity=1] (D)at ({360/16 * 3}:3cm) {\tiny{$2$}};
							\node[shade,shading=ball,circle,ball color=black!10!white,circle, draw, minimum size=0.3cm, inner sep=0pt,opacity=1] (E) at ({360/16 * 4}:3cm) {\tiny{$1$}} ;
							\node[shade,shading=ball,circle,ball color=black!60!white,circle, draw, minimum size=0.3cm, inner sep=0pt] (F) at ({360/16 * 5}:3cm) {\tiny{$16$}};
							\node[shade,shading=ball,circle,ball color=black!10!white,circle, draw, minimum size=0.3cm, inner sep=0pt,opacity=1] (G) at ({360/16 * 6}:3cm) {\tiny{$15$}};
							\node[shade,shading=ball,circle,ball color=black!10!white,circle, draw, minimum size=0.3cm, inner sep=0pt,opacity=1] (H) at ({360/16 * 7}:3cm) {\tiny{$14$}};
							\node[shade,shading=ball,circle,ball color=green!60!white,circle, draw, minimum size=0.3cm, inner sep=0pt,opacity=1] (I) at ({360/16 * 8}:3cm) {\tiny{$13$}};
							\node[shade,shading=ball,circle,ball color=black!60!white,circle, draw, minimum size=0.3cm, inner sep=0pt] (J) at ({360/16 * 9}:3cm) {\tiny{$12$}};
							\node[shade,shading=ball,circle,ball color=black!60!white,circle, draw, minimum size=0.3cm, inner sep=0pt] (K) at ({360/16 * 10}:3cm) {\tiny{$11$}};
							\node[shade,shading=ball,circle,ball color=black!10!white,circle, draw, minimum size=0.3cm, inner sep=0pt,opacity=1] (L) at ({360/16 * 11}:3cm) {\tiny{$10$}};
							\node[shade,shading=ball,circle,ball color=black!10!white,circle, draw, minimum size=0.3cm, inner sep=0pt,opacity=1] (M) at ({360/16 * 12}:3cm){\tiny{$9$}};
							\node[shade,shading=ball,circle,ball color=green!60!white,circle, draw, minimum size=0.3cm, inner sep=0pt,opacity=1] (N) at ({360/16 * 13}:3cm){\tiny{$8$}};
							\node[shade,shading=ball,circle,ball color=black!10!white,circle, draw, minimum size=0.3cm, inner sep=0pt,opacity=1] (O)at ({360/16 * 14}:3cm) {\tiny{$7$}};
							\node[shade,shading=ball,circle,ball color=black!10!white,circle, draw, minimum size=0.3cm, inner sep=0pt,opacity=1] (P) at ({360/16 * 15}:3cm) {\tiny{$6$}};
							
							\Edge[bend=25,opacity=0.5](A)(C);
							\Edge[bend=-8,opacity=0.5](A)(P);
							%\Edge[bend=-25,opacity=0.5](C)(P);
							
							%\Edge[bend=8,opacity=0.5](F)(G);
							\Edge[bend=8,opacity=0.5](G)(H);
							%\Edge[bend=8,opacity=0.5](H)(M);
							\Edge[bend=-8,opacity=0.5](M)(F);
							%\Edge[bend=-8,opacity=0.5](M)(G);
							\Edge[bend=-25,opacity=0.5](H)(F);
							
							\Edge[bend=-8,opacity=0.5](D)(I); 
							
							\Edge[bend=-8](K)(J);
							%\Edge[bend=-8](K)(C);
							\Edge[bend=-8](K)(F); 
							%\Edge[bend=-8](J)(F); 
							%\Edge[bend=-8](F)(C);
							\Edge[bend=-8](J)(C);         
						\end{tikzpicture}
					\end{center}
					
				\end{minipage}\begin{minipage}[t]{0.1\textwidth}
					\begin{center}
						\begin{tikzpicture}
							\draw [line width=5pt, -stealth, white!80!black] (0,1) -- (1,1);
							\node[] (A) at (0,0) {};
						\end{tikzpicture}
					\end{center}
				\end{minipage}\begin{minipage}[t]{0.25\textwidth}
					\begin{center}
						\begin{tikzpicture}[scale=0.45]
							\node[shade,shading=ball,circle,ball color=black!10!white, draw, minimum size=0.3cm, inner sep=0pt,opacity=1] (A) at ({360/16 * 0}:3cm) {\tiny{$5$}};
							\node[shade,shading=ball,circle,ball color=black!60!white,circle, draw, minimum size=0.3cm, inner sep=0pt,opacity=1] (B) at ({360/16 * 1}:3cm) {\tiny{$4$}};
							\node[shade,shading=ball,circle,ball color=black!10!white,circle, draw, minimum size=0.3cm, inner sep=0pt,opacity=1] (C) at ({360/16 * 2}:3cm) {\tiny{$3$}};
							\node[shade,shading=ball,circle,ball color=black!10!white,circle, draw, minimum size=0.3cm, inner sep=0pt,opacity=1] (D)at ({360/16 * 3}:3cm) {\tiny{$2$}};
							\node[shade,shading=ball,circle,ball color=black!10!white,circle, draw, minimum size=0.3cm, inner sep=0pt,opacity=1] (E) at ({360/16 * 4}:3cm) {\tiny{$1$}} ;
							\node[shade,shading=ball,circle,ball color=black!10!white,circle, draw, minimum size=0.3cm, inner sep=0pt,opacity=1] (F) at ({360/16 * 5}:3cm) {\tiny{$16$}};
							\node[shade,shading=ball,circle,ball color=black!10!white,circle, draw, minimum size=0.3cm, inner sep=0pt,opacity=1] (G) at ({360/16 * 6}:3cm) {\tiny{$15$}};
							\node[shade,shading=ball,circle,ball color=black!10!white,circle, draw, minimum size=0.3cm, inner sep=0pt,opacity=1] (H) at ({360/16 * 7}:3cm) {\tiny{$14$}};
							\node[shade,shading=ball,circle,ball color=black!60!white,circle, draw, minimum size=0.3cm, inner sep=0pt,opacity=1] (I) at ({360/16 * 8}:3cm) {\tiny{$13$}};
							\node[shade,shading=ball,circle,ball color=black!10!white,circle, draw, minimum size=0.3cm, inner sep=0pt,opacity=1] (J) at ({360/16 * 9}:3cm) {\tiny{$12$}};
							\node[shade,shading=ball,circle,ball color=black!10!white,circle, draw, minimum size=0.3cm, inner sep=0pt,opacity=1] (K) at ({360/16 * 10}:3cm) {\tiny{$11$}};
							\node[shade,shading=ball,circle,ball color=black!10!white,circle, draw, minimum size=0.3cm, inner sep=0pt,opacity=1] (L) at ({360/16 * 11}:3cm) {\tiny{$10$}};
							\node[shade,shading=ball,circle,ball color=black!10!white,circle, draw, minimum size=0.3cm, inner sep=0pt,opacity=1] (M) at ({360/16 * 12}:3cm){\tiny{$9$}};
							\node[shade,shading=ball,circle,ball color=black!60!white,circle, draw, minimum size=0.3cm, inner sep=0pt,opacity=1] (N) at ({360/16 * 13}:3cm){\tiny{$8$}};
							\node[shade,shading=ball,circle,ball color=black!10!white,circle, draw, minimum size=0.3cm, inner sep=0pt,opacity=1] (O)at ({360/16 * 14}:3cm) {\tiny{$7$}};
							\node[shade,shading=ball,circle,ball color=black!10!white,circle, draw, minimum size=0.3cm, inner sep=0pt,opacity=1] (P) at ({360/16 * 15}:3cm) {\tiny{$6$}};
							
							\Edge[bend=25,opacity=0.5](A)(C);
							\Edge[bend=-8,opacity=0.5](A)(P);
							%\Edge[bend=-25,opacity=0.5](C)(P);
							%\Edge[bend=8,opacity=0.5](F)(G);
							\Edge[bend=8,opacity=0.5](G)(H);
							%\Edge[bend=8,opacity=0.5](H)(M);
							\Edge[bend=-8,opacity=0.5](M)(F);
							%\Edge[bend=-8,opacity=0.5](M)(G);
							\Edge[bend=-25,opacity=0.5](H)(F);
							\Edge[bend=-8,opacity=0.5](D)(I); 
							\Edge[bend=-8,opacity=0.5](K)(J);
							%\Edge[bend=-8,opacity=0.5](K)(C);
							\Edge[bend=-8,opacity=0.5](K)(F); 
							%\Edge[bend=-8,opacity=0.5](J)(F); 
							%\Edge[bend=-8,opacity=0.5](F)(C);
							\Edge[bend=-8,opacity=0.5](J)(C);           
							\Edge[bend=-8,opacity=1](B)(I); 
							\Edge[bend=-8,opacity=1](B)(N);
							%\Edge[bend=-8,opacity=1](N)(I);    
							
						\end{tikzpicture}
					\end{center}
					
				\end{minipage}
			}
			\caption{\label{fig:1} 
				An illustration of two consecutive transitions of the $\Theta$-graph on $16$ vertices. Left: a given state of the graph. Independently of each other, vertices are coloured green with some probability $u$.
				Middle: each pair of coloured vertices is independently joined by an edge with some probability $q$. The same procedure is repeated to produce a second transition 
				(middle to right). The top panel shows the special case $q = 1$ (which will be our focus in what follows). In the bottom panel, $q < 1$.
				In the case $q=1$, the associated $\Theta$ coalescent with momentum (Def.~\ref{def:multlambda}) performs the transitions: \\  $ \text{ } \qquad \qquad \qquad \{ \{1\}, \{2,13\}, \{\textcolor{darkgreen}{3},5,6 \}, \{4 \}, \{7\}, \{8\}, \{9,14,15,\textcolor{darkgreen}{16}\},
				\{10\}, \{\textcolor{darkgreen}{11}\}, \{\textcolor{darkgreen}{12}\} $ \\
				$ \text{ } \qquad \qquad \qquad \qquad \to
				\{ \{1\}, \{2,\textcolor{darkgreen}{13}\}, \{3,5,6,9,11,12,14,15,16\}, \{\textcolor{darkgreen}{4}\}, \{7\}, \{\textcolor{darkgreen}{8}\}, 
				\{10\}    \}$
				\\  $ \text{ } \qquad \qquad \qquad \qquad \qquad
				\to \{   
				\{1\}, \{2,4,8,13\}, \{3,5,6,9,11,12,14,15,16\}, \{7\}, \{10\}
				\}.
				$
			}
		\end{figure}
		
		It is natural to associate with $G^n$ a coalescent process, i.e. a Markov process taking values in the partitions of $[n]$, corresponding to the connected components
		of $G^n$, which becomes coarser as connected components merge over time. Note that somewhat contrary to common usage, we think of connected components as collection of vertices, not as subgraphs.
		
		\begin{definition} \label{def:multlambda}
			Let $G^n$ be the $\Theta$-dynamic random graph. We call the process $\Pi^n \coloneqq (\Pi^n_t)_{t \geqslant 0}^{}$, with $\Pi^n_t$ being the set of sets of vertices associated with the connected components of $G^n_t$, the \emph{$\Theta$-coalescent with momentum} on $n$ vertices.
		\end{definition}
		In this setting, larger blocks coalesce faster which is why we call it a coalescent with momentum, see \eqref{multiplicativemergeprob}.
		Clearly, $\Pi^n$ is a Markov process in its own right; upon encountering a $(u,q)$-merger,  we colour
		each vertex in $[n]$ independently with probability $u$. Subsequently and independently between each pair of coloured vertices, we draw an edge with probability $q$. Finally, we merge all blocks that are connected by these edges. More precisely, for each maximal set $\{A_1,A_2,\ldots,A_k\}$ of subsets of
		$\Pi_{t-}^n$ such that for all $i$ and $j$ there exist $v_i^{} \in A_i$ and $v_j^{} \in A_j$ such that $v_i^{}$ and $v_j^{}$ are joined by an edge, we form $\Pi_t^n$ by replacing $A_1,\ldots,A_k$ with the single block
		$A_1 \cup \ldots \cup A_k$. Moreover, since our construction is exchangeable, the process recording only the sizes of connected components (see Def.~\ref{def:sizespectrum} below) up to a certain order is Markovian as well.

		\begin{definition} \label{def:sizespectrum}
			For any fixed $n,d \in\Nb_+$ and with $\Pi^{n}$ as in Definition~\ref{def:multlambda}, we call the process
			\newline
			\mbox{
				$\texttt{C}^{n} = (\texttt{C}^{n}_t)_{t \geqslant 0}^{} = (\texttt{C}^{n}_{t,1}, \ldots, \texttt{C}^{n}_{t,d} )_{t \geqslant 0}^{}$
			} with
			\begin{equation*}
				\texttt{C}^{n}_{t,i} \defeq \big | \{ A \in \Pi_{t}^{n} : |A| = i \}     \big |
			\end{equation*}
			the \emph{block size spectrum (up to order $d$)} of $\Pi^{n}$.
		\end{definition}
		
		In particular, if we consider the Erd\H{o}s-Rényi setting ($\Theta( (0,1]^2)=0$, $\theta >0$), the block size spectrum of $\Pi^n_t$ corresponds to a special case of the so-called Marcus-Lushnikov process \cite{Marcus1968,Lushnikov1973} where pairs of particles with masses $x$ and $y$ merge independently at  rate $K(x,y)/n$, with $K(x,y) = xy$.
		
		\subsection{A motivation: social bubbles}
		Consider a community consisting of $n$ individuals interacting with each other at social gatherings. Their timing and impact are represented by the constant $\theta \geqslant 0$ and the Poisson point process $N$ from Def.~\eqref{def:thetamuN} in the following way.
		If $(t,u,q) \in N$, a social gathering takes place at time $t$. Individuals attend this meeting independently of each other with probability $u$ and friendships form independently between each pair of participants with probability $q$. In addition, each pair of individuals meets independently of the others (and independently of large social gatherings) and becomes friends at rate $\theta$. We define the \emph{social bubble} of a group of individuals as the individuals themselves, their friends, the friends of their friends, and so on. For simplicity, we assume that no one knows anyone in the beginning ($t=0$). For a more precise description, we identify each individual in the community with a number $i\in[n]$ and consider the partition-valued process $\Pi^n$ started in
		$\Pi^n_0 = 
		\big \{ \{ i \} : i \in [n] \big \}
		$.
		The social bubble at time $t$ of a group $S_0\subset [n]$ of individuals is then given by
		$$S_t\coloneqq \bigcup\limits_{\substack{B\in\Pi_t^n\\B\cap S_0\neq \varnothing}}B,$$
		the set of all individuals that are friends of friends of $\ldots$ of individuals in $S_0$;
		see Fig. \ref{fig:Diagram}. Assume now that $S_0\subset [n]$ is a uniform sample of size $m_0$ and let $M_t^n$ denote the size of the social bubble at time $t$. In particular, $M_0^n=m_0$. The process $M^n\coloneqq (M_t^n)_{t\geqslant 0}^{}$ is clearly not Markovian. Assume for instance that $S_0 = \{1\}$ and that individuals $1$ and $4$ meet and become friends at some time $t > 0$. If no other event occurred in the interval $(0,t)$, we would have $S_t = \{1,4\}$, whereas if individuals $2$ and $4$ became friends during that time interval (and no other event occurred), we would have $S_t \supseteq \{1,2,4\}$, even though $S_{t-} = \{1\}$ in both cases.
		
		%To describe the evolution of $S_t$???,
		We therefore consider the auxiliary process $(\tla_t^{n},\tC_t^n)_{t\geqslant 0}^{}$, where $\tC_t^n=(\tC_{t,1}^n,\ldots,\tC_{t,n}^n)$ is the full block-size spectrum (see Definition~\eqref{def:sizespectrum}) of $\Pi_t^n$ and $\tla_t^{n}\coloneqq(\lambda_{t,1}^{n},\ldots,\lambda_{t,n}^{n})$ with
		$$\lambda_{t,i}^{n}\coloneqq|\{B\in\Pi_t^n: |B|=i,\, B\cap S_0\neq\varnothing\}|,$$
		
		\begin{figure}[b!]
			\scalebox{0.8}{
				\begin{minipage}[b]{0.5\textwidth}
					\centering
					\scalebox{0.5}{\begin{tikzpicture}
							
							\node[] at (-2,0) {$0$};
							\node[] at (-2,2) {$t_1$};
							\node[] at (-2,5) {$t_2$};
							\node[] at (-2,7) {$t_3$};
							\node[] at (-2,8) {$T$};
							
							% asg Horizontal
							\draw[opacity=1, dotted]  (-1,0) -- (-1,8);
							\draw[opacity=1, dotted]  (0,0) -- (0,8);
							\draw[opacity=1, dotted]  (1,0) -- (1,8);
							\draw[opacity=1, dotted]  (2,0) -- (2,8);
							\draw[opacity=1, dotted]  (3,0) -- (3,8);
							\draw[opacity=1, dotted]  (4,0) -- (4,8);
							\draw[opacity=1, dotted]  (5,0) -- (5,8);
							\draw[opacity=1, dotted]  (6,0) -- (6,8);
							\draw[opacity=1, dotted]  (7,0) -- (7,8);
							
							\node[below] at (-1,0) {$1$};
							\node[below] at (0,0) {$2$};
							\node[below] at (1,0) {$3$};
							\node[below] at (2,0) {$4$};
							\node[below] at (3,0) {$5$};
							\node[below] at (4,0) {$6$};
							\node[below] at (5,0) {$7$};
							\node[below] at (6,0) {$8$};
							\node[below] at (7,0) {$9$};
							
							\begin{scope}[every node/.style={circle,fill,thick,draw,scale=0.6}]
								\node (A1) at (0,2) {};
								\node (A2) at (1,2) {};
								\node (A3) at (3,2) {};
								
								\node (B1) at (1,5) {};
								\node (B2) at (2,5) {};
								\node (B3) at (5,5) {};
								\node (B4) at (7,5) {};
								
								\node (C1) at (2,7) {};
								\node (C2) at (4,7) {};
							\end{scope}
							
							\draw[bend right, very thick]  (A1) to node [auto] {} (A2);
							\draw[bend right, very thick] (A1) to node [auto] {} (A3);
							
							\draw[bend right, very thick] (B1) to node [auto] {} (B4);
							\draw[bend right, very thick]  (B2) to node [auto] {} (B3);
							
							\draw[bend right, very thick]  (C1) to node [auto] {} (C2);

					\end{tikzpicture}}
				\end{minipage}\begin{minipage}[b]{0.5\textwidth}
					\centering
					\scalebox{0.5}{\begin{tikzpicture}
							
							\node[] at (-2,0) {$0$};
							\node[] at (-2,2) {$t_1$};
							\node[] at (-2,5) {$t_2$};
							\node[] at (-2,7) {$t_3$};
							\node[] at (-2,8) {$T$};
							
							% asg Horizontal
							\draw[opacity=1, dotted]  (-1,0) -- (-1,8);
							\draw[opacity=1, dotted]  (0,0) -- (0,8);
							\draw[opacity=1, dotted]  (1,0) -- (1,8);
							\draw[opacity=1, dotted]  (2,0) -- (2,8);
							\draw[opacity=1, dotted]  (3,0) -- (3,8);
							\draw[opacity=1, dotted]  (4,0) -- (4,8);
							\draw[opacity=1, dotted]  (5,0) -- (5,8);
							\draw[opacity=1, dotted]  (6,0) -- (6,8);
							\draw[opacity=1, dotted]  (7,0) -- (7,8);
							
							\node[below] at (-1,0) {$1$};
							\node[below] at (0,0) {$2$};
							\node[below] at (1,0) {$3$};
							\node[below] at (2,0) {$4$};
							\node[below] at (3,0) {$5$};
							\node[below] at (4,0) {$6$};
							\node[below] at (5,0) {$7$};
							\node[below] at (6,0) {$8$};
							\node[below] at (7,0) {$9$};
							
							\begin{scope}[every node/.style={circle,fill,thick,draw, color=red,scale=0.6}]
								\node (A1) at (0,2) {};
								\node (A2) at (1,2) {};
								\node (A3) at (3,2) {};
								\node (B1) at (1,5) {};
								\node (B4) at (7,5) {};			
								
							\end{scope}
							\begin{scope}[every node/.style={circle,fill,thick,draw, color=blue,scale=0.6}]
								\node (B2) at (2,5) {};
								\node (B3) at (5,5) {};
								\node (C1) at (2,7) {};
								\node (C2) at (4,7) {};
							\end{scope}

							\begin{scope}[every node/.style={circle,fill,opacity=0.4,thick,draw, color=black,scale=0.6}]
								
								\node (S0) at (-1,0) {};
								\node (S1) at (0,0) {};
								\node[color=red,opacity=0.3] (S2) at (1,0) {};
								\node (S3) at (2,0) {};
								\node (S4) at (3,0) {};
								\node (S5) at (4,0) {};
								\node[color=blue,opacity=0.3] (S6) at (5,0) {};
								\node (S7) at (6,0) {};
								\node (S8) at (7,0) {};
								\node (T0) at (-1,8) {};
								\node[color=red,opacity=0.3] (T1) at (0,8) {};
								\node[color=red,opacity=0.3] (T2) at (1,8) {};
								\node[color=blue,opacity=0.3] (T3) at (2,8) {};
								\node[color=red,opacity=0.3] (T4) at (3,8) {};
								\node[color=blue,opacity=0.3] (T5) at (4,8) {};
								\node[color=blue,opacity=0.3] (T6) at (5,8) {};
								\node (T7) at (6,8) {};
								\node[color=red,opacity=0.3] (T8) at (7,8) {};

								\node[opacity=0.3] (Sc0) at (-1,7) {};
								\node[color=red,opacity=0.3] (Sc1) at (0,7) {};
								\node[color=red,opacity=0.3] (Sc2) at (1,7) {};
								\node[color=red,opacity=0.3] (Sc4) at (3,7) {};
								\node[color=blue,opacity=0.3] (Sc6) at (5,7) {};
								\node (Sc7) at (6,6) {};
								\node[color=red,opacity=0.3] (Sc8) at (7,7) {};
								
								\node (Sb0) at (-1,5) {};
								\node[color=red] (Sb1) at (0,5) {};
								\node (Sb5) at (4,5) {};
								\node[color=red] (Sb4) at (3,5) {};
								\node (Sb7) at (6,5) {};
								
								\node (Sa0) at (-1,2) {};
								\node (Sa3) at (2,2) {};
								\node (Sa5) at (4,2) {};
								\node[color=blue,opacity=0.3] (Sa6) at (5,2) {};
								\node (Sa7) at (6,2) {};
								\node (Sa8) at (7,2) {};
								
							\end{scope}
							
							\draw[bend right, color=red, very thick]  (A1) to node [auto] {} (A2);
							\draw[bend right, color=red, very thick] (A1) to node [auto] {} (A3);
							
							\draw[bend right, color=red,very thick] (B1) to node [auto] {} (B4);
							\draw[bend right,color=blue, very thick]  (B2) to node [auto] {} (B3);
							\draw[bend right,color=red, very thick, color=red, opacity=0.3] (Sb1) to node [auto] {} (B1);
							\draw[bend right, very thick, color=red, opacity=0.3] (Sb1) to node [auto] {} (Sb4);
							
							\draw[bend right, color=blue, very thick]  (C1) to node [auto] {} (C2);
							\draw[bend right, very thick, color=red, opacity=0.1] (Sc1) to node [auto] {} (Sc2);
							\draw[bend right, dotted, color=red, opacity=0.1] (Sc2) to node [auto] {} (Sc4);
							\draw[bend right, very thick, color=red, opacity=0.1] (Sc1) to node [auto] {} (Sc4);
							\draw[bend right, color=red, very thick, opacity=0.3]  (Sc2) to node [auto] {} (Sc8); 
							\draw[bend right, color=blue, very thick, opacity=0.3]  (C1) to node [auto] {} (Sc6);
							
							\draw[opacity=1, color=blue, dotted]  (S6) to node [auto] {} (Sa6);
							\draw[opacity=1, color=blue, dotted]  (Sa6) to node [below] {} (B3);
							\draw[opacity=1, color=blue, dotted] (B3) to node [below] {} (Sc6) -- (T6); 	  
							\draw[opacity=1, color=blue, dotted]  (B2) -- (C1) -- (T3);
							\draw[opacity=1, color=red, dotted]  (S2) -- (A2) -- (B1) -- (Sc2) -- (T2);
							\draw[opacity=1, color=red, dotted]  (A1) -- (Sb1) -- (Sc1) -- (T1);
							\draw[opacity=1, color=red, dotted]  (A3) -- (Sb4) -- (Sc4) -- (T4);	
							\draw[opacity=1, color=red, dotted]  (B4) -- (Sc8) -- (T8);	          
							\draw[opacity=1, color=blue, dotted]  (C2) -- (T5);
							
					\end{tikzpicture}}
					
				\end{minipage}
			}
			\caption{Left: a graphical representation of the occurrence of social gatherings and the friendship relations established on their occasion in a community consisting of $9$ individuals; social gatherings occur at times $t_1, t_2, t_3$; participants are depicted as vertices of a graph; pairs of participants are connected by an edge if they become friends during the meeting. Right: the social network process associated to individual $3$ (resp. $7$) corresponding to the social gatherings in the left panel; the social gatherings are superposed on the diagram: at the times of a meeting, participants are depicted in dark red or dark blue; light red (resp. light blue) vertices represent individuals that belong to the social network of individual $3$ (resp. $7$), but that are not part of the meeting; light red and light blue edges represent connections established in previous meetings. At any time, vertices that are coloured red (resp. blue) belong to the social network of individual $3$ (resp. $7$).}
			\label{fig:Diagram}
		\end{figure}
		i.e. $\lambda_{t,i}^n$ is the number of blocks with size $i$ contained in the social bubble of $S_0$ at time $t$.

		Note that 
		$M_t = |S_t| = \sum_{i=1}^n i \lambda_{t,i}^n$.
		Owing to the exchangeability of the underlying graph process,
		$(\tla_t^n, \tC_t^n)_{t \geqslant 0}^{}$
		is actually a Markov process; assume that we observe a merger of $k$ (say, at time $t$) blocks $A_1,\ldots,A_k$ into one, $\ell_i^{}$ of which 
		contain exactly $i$ vertices so that
		$k = \ell_1^{} + \ldots + \ell_n^{}$. By exchangeability, the participating blocks are uniformly (with replacement) distributed. Since we also assumed that 
		$S_0$ is a uniform sample of size $m_0^{}$, this means that the numbers $V_i$ of blocks of size $i$ that participate in the merger \emph{and} contain an individual inside $S_0$ are independent for different $i$ and hypergeometrically distributed. More precisely, 
		\begin{equation*}
			\PP{V_i = v_i^{}\mid \tC_{t-,i}^n,\lambda_{t-,i}^n }
			=
			\frac{
				\binom{\tC_{t-,i}^n -\lambda_{t-,i}^n }{\ell_i^{} - v_i^{}}
				\binom{\lambda_{t-,i}^n}{v_i^{}}
			}
			{
				\binom{\tC_{t-,i}^n}{\ell_i^{}}
			}.
		\end{equation*}
		Clearly, the values of the $V_i^{}$ 
		determine the transition of 
		$(\lambda_t^n,\tC_t^n)$; we have
		\begin{equation*}
			\tC_{t}^n 
			= \tC_{t-}^n 
			+
			\bee_{\|\bl\|}^{}
			-
			\ell_1^{} \bee_1^{} - \ldots - \ell_n^{} \bee_n^{}.
		\end{equation*}
		and 
		\begin{equation*}
			\tla_t^n = \tla_{t-}^n 
			+
			\1_{|{\bf V}| > 0 } \,
			(\bee_{\| \bl \|}^{}
			- 
			V^{}_1 \bee_1^{} - \ldots - V^{}_n \bee_n^{}),
		\end{equation*}
		where $\bl = (\ell_1^{},\ldots,\ell_n^{})$
		and
		${ \bf V} = (V_1,\ldots,V_n)$.
		
		In fact, we can determine the distribution of $\tla^n_t$ conditional on the full block size spectrum $\tC_t^n$. More precisely, one can show that (see Appendix \ref{app-bubble} for the details)
		\begin{equation}
			\label{samplingformula1}
			\PP{\tla_t^{n}=\bl\mid \tC_t^n=\bc}=\frac{ \prod\limits_{i\in[n]}\binom{c_i}{\ell_i}\,  i^{\ell_i} }{\binom{n}{m_0}}  \Eb\bigg[\binom{\lVert \bl \rVert-F_{\bl}}{m_0-|\bl|}\bigg]1_{\{|\bl|\leqslant m_0
				\leqslant \lVert\bl\rVert,\, \bl \leqslant \bc \}}, 
		\end{equation}
		where $F_{\bl}=\sum_{i=1}^n\sum_{j=1}^{\ell_i}F_{i,j}$, and
		the random variables $(F_{i,j}:i\in[n],j\in[\ell_i])$ are independent and $F_{i,j}$
		is uniformly distributed on $[i]$, where $\bl \leqslant \bc$ is to be understood componentwise. 
		
		In particular, we have
		$$\PP{M_t^{n}=m}=\sum_{\lVert\bl\rVert=m}\frac{ \Eb\left[ \prod\limits_{i\in[n]}\binom{\tC_{t,i}^n}{\ell_i}\, i^{\ell_i}  \binom{\lVert\bl\rVert-F_{\bl}}{m_0-|\bl|}\right]}{\binom{n}{m_0}}, \quad m_0\leqslant m\leqslant n,$$
		where the $F_{\bl}$ is assumed to be independent of $\tC_t^n$ under $\Pb$.
		For example, with $m_0=1$, this yields $\PP{M_t^{n}=m}=\Eb[\tC_{t,m}^n]/n$.
		\subsection{$\Lambda$-dynamic random graphs and $\Lambda$-coalescents with momentum}
		
		For a finite nonnegative measure $\Lambda$ on $[0,1]$, the 
		\emph{$\Lambda$-dynamic random graph on $n$ vertices}
		is the $\Theta$-dynamic random graph on $n$ vertices with $\Theta(\dd u, \dd q) = \Lambda(\dd u) \delta_1(\dd q)$. By Eq.~\eqref{eq:smalltheta}, we then have
		$\theta = \Lambda(\{0\})$,
		see the top of Figure \ref{fig:1} for an illustration. 
		We call the associated coalescent process 
		$\Pi^n$ (see Definition~\ref{def:multlambda})
		the
		\emph{$\Lambda$-coalescent with momentum}.
		In the case of $\Lambda$-graphs, by a slight abuse of notation, we drop the last component in $N$ and $\mu$, i.e. we say that $N$ is a Poisson point process on $[0,\infty) \times (0,1]$ with intensity $\mu$, where
		\begin{align}
			\mu( \dd t , \dd u )= \dd t  \frac{\Lambda(\dd u)}{ u^2} \1_{(0,1]}(u). \label{eq:definition_of_N}
		\end{align}

		In this special case, where $q$ is always equal to $1$, we speak of $u$-mergers instead of $(u,1)$-mergers. Upon a $u$-merger, simply colour each vertex with probability $u$ and merge all blocks that contain at least one coloured vertex. 
		Equivalently,  we may skip the colouring step and mark each block $A$ independently with probability 
		\begin{equation} \label{markingprobability}
			p_{|A|}^{} (u) \defeq 1 - (1 - u)^{|A|} = |A| u + O(u^2).
		\end{equation} 
		With this, the probability that a given collection of $m$ blocks
		$A_1, \ldots, A_m$ participates in a $u$-merger is 
		\begin{equation} \label{multiplicativemergeprob}
			\prod_{i=1}^m p_{|A_i|}^{} (u) = \prod_{i = 1}^m \big ( 1 - (1-u)^{|A_i|}   \big ) = u^m \prod_{i = 1}^m |A_i| + O(u^{m+1}).
		\end{equation}
		Moreover, each pair $A,B$ of blocks in the $\Lambda$-coalescent with momentum merges independently at rate $\theta |A||B|$. This is in contrast to the classic $\Lambda$-coalescents studied in mathematical population genetics~\cite{Pitman1999, Sagitov1999}; there, upon a $u$-merger, each block is marked independently  with probability $u$, regardless of its size. In particular, the probability that a given collection of $m$ blocks $A_1,\ldots, A_m$ participates in a $u$-merger is $u^m$.
		For the general $\Theta$-coalescent, the analogue of Eq.~\eqref{multiplicativemergeprob} is significantly more complicated and we refrain from stating a precise formula here.

		%\subsection{Relation to other random graph models}
		To the best of our knowledge, the $\Lambda$-dynamic random graph model and the
		$\Lambda$-coalescent with momentum as defined here are new and have not been considered in this formulation. Our model is inspired by the construction of the classic $\Lambda$-coalescent known in mathematical biology and physics \cite{Berestycki2009}. The crucial difference lies in the fact that the rate at which two blocks get connected in a $\Lambda$-coalescent does not depend on the size of blocks, whereas in our case  bigger blocks are connected at a faster rate.
		
		\begin{remark}[Comparison to other random graph models]
			A static random graph, the so-called superposition of Bernoulli random graphs, has been studied for example in \cite{Bloznelis2024}; see also the references therein. In each step a random set $X$ of vertices is sampled and then a Bernoulli random graph is generated on $X$ with a random edge probability $q$. This procedure is repeated $m$ times and the final random graph is obtained by a superposition of these $m$ random graphs. From a certain point of view, this procedure mimics a static construction of $G^n_t$ if $t$ is chosen such that $G^n_t$ has been updated $m$ times. However, in our case the subsample of vertices considered in each step is a random variable which depends (implicitly) on $n$, whereas in \cite{Bloznelis2024} the distribution of $X$ (describing the random sampled subset) is fixed in $n$. Nevertheless, \cite{Bloznelis2024} consider similar questions as we do here and we discuss their results and compare them to our results in more detail in Remark \ref{rem:Bloznelis}.
			
			Another random (hyper)-graph model similar to ours is the one studied in \cite{Lemaire2017}. In this work, the authors analyse a dynamically evolving random graph, where at each time of a Poisson process, a finite random subset of vertices is selected uniformly, and a complete graph is drawn between them. Then, the random graph is constructed as a superposition of this complete graph and the prior state of the random graph.
			This can be seen as a dynamic version of the superposition of Bernoulli-random graphs in~\cite{Bloznelis2024} in the special case $q = 1$.
			Again this resembles our model, where the key difference lies in the fact that the sizes of the connected components added in \cite{Lemaire2017} does not depend on $n$, whereas in our model the coalescence rates grow with the sizes of the blocks involved.
			
			In the literature surrounding random graphs the block size spectrum is not considered often, but instead the so-called subgraph counts. That is counting the number of occasions a given graph $F$ appears in $G^n$, see for example \cite{Janson1990,Rucinski1988}. The block size frequency is a similar random graph statistics, however it counts the number of connected graphs appearing in $G^n$, which have $1,...,d$ vertices. Therefore, the block size frequency is a coarser statistics than the random graph counts and stops being informative once the graph becomes connected.
		\end{remark}

		\subsection{Results for the beta-dynamic random graph}
		\label{subsec:betacase} \label{subsec:results}
		%Similarly, as in the literature surrounding $\Lambda$-coalescents, we define 
		%\begin{align}
		%    \Lambda( \dd u ) = \Theta(\dd u , (0,1]),  
		%\end{align}
		%which is a finite measure on $(0,1]$. This measure entails that at rate $|\Lambda(\dd u)|$ a fraction $u$ of vertices is being selected, where $u$ is chosen according to $\Lambda(\dd u)/|\Lambda(\dd u)|$.
		In the following, we consider a special case of $\Lambda$-dynamic graphs, where $\Lambda$ is proportional to the Beta distribution, i.e.
		\begin{equation} \label{betadist}
			\Lambda (\dd u) \defeq \1_{[0,1]}(u) u^{\alpha - 1} (1 - u)^{\beta - 1} \dd u,
		\end{equation}
		for some fixed $\alpha \in (0,1)$ and $\beta > 0$, neglecting the normalisation $B(\alpha,\beta)^{-1}$ for ease of notation. This implies in particular, that we connect all chosen vertices with a complete graph and that we have $\theta =0$.

		%\red{We need to check how we want to define the $\Lambda$, before it was a measure on $[0,1]$ now it is on $[0,1]^2$ .}
		%We will also fix $d \in \Nb_+$ in Definition~\ref{def:sizespectrum}. 
		
		We want to study the asymptotics of the block size spectrum as $n$ tends to infinity. It is important to keep in mind that in the parameter regime we are considering here (i.e., $\alpha \in (0,1)$), the classic Beta coalescent comes down from infinity, meaning that, even when started with infinitely many blocks at time $0$, it consists of a finite number of blocks of infinite size at every positive time. This implies that the rate of merging explodes and hence one has to slow down time accordingly to observe a non-trivial limit, see e.g. \cite{Miller2023} or \cite{Berestycki2010}. Since mergers occur with higher probability in the multiplicative setting, as seen in Eq.~\eqref{multiplicativemergeprob}, this will also be true for us.%moreover, Eq.~\eqref{multiplicativemergeprob} then also implies that the rate of merging explodes and we must slow down time to observe a non-trivial limit. 
		
		Let us be a bit more precise. For a single vertex to participate in a non-silent $u$-merger, it must be coloured (which happens with probability $u$) and at least one other vertex needs to be coloured as well 
		(which happens with probability $1 - (1-u)^n$). Thus, the rate at which each individual vertex is affected by a non-silent $u$-merger is
		\begin{align*}
			\int_{(0,1]} u (1-(1-u)^{n-1}) \frac{\Lambda(\dd u)}{u^2} &= \int_{(0,1]} u^{\alpha-2} (1-(1-u)^{n-1}) (1-u)^{\beta-1} \dd u \\
			&= n^{1-\alpha} \frac{\Gamma(\alpha)}{1-\alpha} + O(1),
		\end{align*}
		due to Lemma \ref{lem:integralestimate2}.
		Since there are $n$ vertices in total, this means that after slowing down time by a factor of the order 
		$n^{\alpha - 1}$, we expect to see a macroscopic (i.e. of order $n$) number of vertices involved in merging events per unit time. This motivates the following definition.
		
		\begin{definition}
			In what follows, we denote by 
			\begin{equation*}
				\bC^{n}_t \defeq \frac{1}{n} 
				\texttt{C}^{n}_{n^{\alpha - 1} t} 
			\end{equation*}
			the \emph{rescaled and normalised block size spectrum} of $\Pi^{n}$. 
		\end{definition}
		
		Before proceeding with our further investigation of $\bC^n$ and stating our main results, namely a law of large numbers and a functional limit theorem, note that $\bC^n$ is a Markov chain in its own right with state space
		\begin{equation*}
			E^n \defeq \Big \{ \bc \in [0,1]^d \cap \frac{1}{n} \Nb^d : \sum_{j = 1}^d j c_j^{} \leqslant 1   \Big \}.
		\end{equation*}
		Note that the family $(\Pi^{n})_{n \in \Nb_+}$ is not consistent, whence we cannot define the multiplicative $\Lambda$-coalescent on $\Nb_+$. 
		On the other hand, the family $(G^{n})_{n \in \Nb_+}$ of underlying graph processes is exchangeable as well as consistent, which will allow us to couple $\Pi^{n}$ for different $n$ in a natural way. Such a coupling will play an important role in our proofs; see Subsection~\ref{subsec:poissonrep} for a precise definition.

		%\subsection{Results}
		%\label{subsec:results}
		
		Our first goal is to derive a dynamic law of large numbers for the normalised block size spectrum. That is, we will describe the limit as $n \to \infty$ of 
		$\bC^{n}$ in terms of an ordinary differential equation. 
		\begin{theorem}[A dynamic law of large numbers] \label{thm:lln}
			For all $\varepsilon > 0$ and $T > 0$, 
			\begin{equation*}
				\lim_{n \to \infty} \PP{\sup_{t \in [0,T]} |\bC^n_t - \cs_{t} |  > \varepsilon} \to 0, \quad  \textnormal{as } n \to \infty,
			\end{equation*}
			where $\cs_t = (\cs_{t,1}^{}, \ldots, \cs_{t,d}^{})_{t \geqslant 0}$ solves the following system of ordinary differential equations
			\begin{align} \label{eq:system_ODE}
				\frac{\dd}{\dd t} \cs_{t,i}^{} = \sum_{\bl \in \Ps_{2+}(i)} \prod_{j = 1}^d \frac{(j \cs_{t,j}^{})^{\ell_j^{}}}{\ell_j^{} !} \Gamma (\alpha + |\bl| - 2)
				- \frac{\Gamma(\alpha)}{1 - \alpha} i \cs_{t,i}^{} \eqdef F_i^{} (\cs_t^{}), \quad 1 \leqslant i \leqslant d,
			\end{align}
			with initial condition $\cs_{0,i} = \delta_{i,1}$.
		\end{theorem}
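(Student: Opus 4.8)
The natural strategy is a standard generator/martingale argument à la Ethier--Kurtz, combined with the Poisson integral representation that (per the introduction) will be developed in Subsection~\ref{subsec:poissonrep}. First I would verify that the drift $F = (F_1,\dots,F_d)$ in~\eqref{eq:system_ODE} is exactly the limiting drift of the Markov chain $\bC^n$. That is, write the generator $\mathcal{L}^n$ of $\bC^n$ acting on a test function $f$ as the sum over $\bl$ of $\lambda^{n,>}_{\bl}(\bc)$ and $\lambda^{n,\leqslant}_{\bl}(\bc)$ times the corresponding increments, take $f = \pi_i$ (the $i$-th coordinate), and show $\mathcal{L}^n \pi_i(\bc) \to F_i(\bc)$ uniformly on $E^n$ as $n\to\infty$. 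The key computation is the asymptotics of the rate integrals: in~\eqref{lambdalarge} and~\eqref{lambdasmall} one substitutes the binomial asymptotics $\binom{nc_j}{\ell_j} \sim (nc_j)^{\ell_j}/\ell_j!$ and $(1-(1-u)^j)^{\ell_j} \sim (ju)^{\ell_j}$ near $u=0$, so the integrand behaves like $u^{\alpha - 3 + |\bl|}$ times a factor $(1-u)^{n\|\bc\|+\cdots}$; after rescaling $u \mapsto u/n$ one gets, via Lemma~\ref{lem:integralestimate2} (the Gamma-integral estimate already invoked in the text), a factor $n^{2-\alpha-|\bl|}\Gamma(\alpha+|\bl|-2)$, which cancels the $n^{|\bl|}$ from the binomials and the $n^{\alpha-1}$ time change to leave an $O(1)$ rate. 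Summing the gain terms over $\bl \in \mathcal{P}_{2+}(i)$ produces the coagulation sum, and the loss term $-\tfrac{\Gamma(\alpha)}{1-\alpha} i \cs_{t,i}$ comes from the $O(1)$ computation of the per-vertex rate already displayed in the text just before the Definition of $\bC^n$; here one must carefully collect the contributions to $\texttt{C}^n_{\cdot,i}$ both from an $i$-block being marked (a loss) and from several smaller blocks merging into size exactly $i$ (a gain), matching the two transition types in~\eqref{lambdalarge}--\eqref{lambdasmall}.

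Next I would establish the quantitative martingale decomposition
\[
\bC^n_t = \bC^n_0 + \int_0^t \mathcal{L}^n(\mathrm{id})(\bC^n_s)\,\dd s + M^n_t,
\]
where $M^n$ is an $\Rb^d$-valued martingale, and control both the drift error and the martingale term. The drift satisfies $|\mathcal{L}^n(\mathrm{id})(\bc) - F(\bc)| \leqslant \varepsilon_n$ uniformly on $E^n$ with $\varepsilon_n \to 0$ by the generator computation above; since $E^n$ is contained in the fixed compact simplex $\{\|\bc\|\leqslant 1\}$ and $F$ is Lipschitz there (it is polynomial), Grönwall will close the argument once the martingale is shown to be negligible. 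For the latter, I would bound the predictable quadratic variation: $\langle M^{n,i}\rangle_T = \int_0^T \big(\mathcal{L}^n(\pi_i^2) - 2\pi_i \mathcal{L}^n \pi_i\big)(\bC^n_s)\,\dd s$, which is a sum over $\bl$ of (rate) $\times$ (jump size)$^2$; each jump in $\bC^n$ has size $O(1/n)$ in $\|\cdot\|$ while the total rate of jumps is $O(n)$ after the time change, so $\langle M^{n,i}\rangle_T = O(1/n)$, and Doob's inequality gives $\Pb(\sup_{t\leqslant T}|M^n_t| > \varepsilon) \leqslant C/(n\varepsilon^2) \to 0$.

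Finally, combining the pieces: on the event $\{\sup_{t\leqslant T}|M^n_t| \leqslant \delta\}$, the process $\bC^n$ satisfies $\bC^n_t = \cs_0 + \int_0^t F(\bC^n_s)\,\dd s + R^n_t$ with $\sup_{t\leqslant T}|R^n_t| \leqslant T\varepsilon_n + \delta$, and since $\cs$ solves $\dot\cs = F(\cs)$ with the same initial condition, Grönwall's inequality yields $\sup_{t\leqslant T}|\bC^n_t - \cs_t| \leqslant (T\varepsilon_n + \delta)e^{LT}$ where $L$ is the Lipschitz constant of $F$ on the simplex; choosing $\delta$ small and then $n$ large makes the right-hand side $< \varepsilon$, proving the claim. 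One should also note that existence, uniqueness and invariance of the simplex for the ODE~\eqref{eq:system_ODE} need a brief remark (local Lipschitzness gives local existence/uniqueness; the constraint $\sum_j j\cs_{t,j}\leqslant 1$ is preserved because $\sum_j j F_j(\cs) \leqslant 0$ on the boundary, so solutions do not escape the compact simplex and are global). The main obstacle I anticipate is the bookkeeping in the generator computation — precisely identifying which merger configurations contribute to the gain of size-$i$ blocks, handling the two integral families~\eqref{lambdalarge} and~\eqref{lambdasmall} simultaneously, and extracting the uniform-in-$\bc$ error bounds from Lemma~\ref{lem:integralestimate2} rather than mere pointwise asymptotics; the martingale estimates and Grönwall step are routine by comparison.
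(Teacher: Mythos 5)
Your strategy is the same as the paper's: a semimartingale decomposition of $\bC^n$ into drift $\int_0^t \mathcal L^n(\mathrm{id})(\bC^n_s)\,\dd s$ plus martingale, uniform convergence of the drift to $F$, smallness of the martingale, and Gr\"onwall. The paper implements the decomposition through a Poisson integral representation of $\bC^n$ rather than the generator directly (Eqs.~\eqref{decomp}--\eqref{mhatdef}), but these are equivalent. Your drift computation is right in spirit and is essentially the paper's Lemma~\ref{lem:uniformapproximation}, and the Gr\"onwall closure is exactly the paper's.

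There is, however, a genuine error in your martingale bound. You claim that ``each jump in $\bC^n$ has size $O(1/n)$ while the total rate of jumps is $O(n)$ after the time change, so $\langle M^{n,i}\rangle_T = O(1/n)$.'' The first claim is false: a single $(u,\bx)$-merger with $u$ of order one colours a macroscopic fraction of the $n$ vertices and can therefore destroy $O(n)$ singletons (or size-$i$ blocks) in one step, so the jump of $\bC^n$ in the $\|\cdot\|_\infty$-norm can be $O(1)$, not $O(1/n)$. These large mergers are rare on the slowed-down time scale (rate $\sim n^{\alpha-1}$), but they are the \emph{dominant} contribution to the predictable quadratic variation: heuristically
\[
\langle M^{n,i}\rangle_T \asymp \int_0^1 n^{\alpha-1}\, u^{\alpha-3}(1-u)^{\beta-1}\,\EE{|\Delta \bC^n|^2\mid u}\,\dd u
\asymp n^{\alpha-1}\int_0^1 u^{\alpha-1}\,\dd u = O(n^{\alpha-1}),
\]
which is strictly larger than your claimed $O(1/n)$ since $\alpha-1>-1$. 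This is exactly what the paper's Lemma~\ref{lem:apriorimartingaleestimate} shows, by splitting $\widehat M^n = \widehat M^{n,+}-\widehat M^{n,-}$, using the It\^o isometry, and estimating the resulting Beta-type integrals via Lemmas~\ref{lem:integralestimate1}--\ref{lem:integralestimate2}. The slower rate $O(n^{\alpha-1})$ still tends to zero, so your Doob~$+$~Gr\"onwall closure survives unchanged; you just cannot obtain it from the ``small-jump / $O(n)$-rate'' heuristic, which would be appropriate for a process with uniformly bounded jump sizes but not here where jumps have heavy tails in $u$ inherited from the $\Lambda$-measure. Getting the correct exponent also matters beyond this theorem: the same $O(n^{\alpha-1})$ estimate feeds into the $L^2$-version (Theorem~\ref{thm:llnl2}) and then into the fluctuation scaling $\sigma_n = n^{(1-\alpha)/(2-\alpha)}$ in the FLT, where an $O(1/n)$ bound would give the wrong CLT scale.
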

		
		\begin{remark} \label{rem:Theorem2.4} 
			Letting $\cs_0\in [0,1]^d$ such that $\| \cs_0 \| \leqslant 1$, then one can easily relax the assumption on the initial condition and simply assume $\EE{(\bC_0^n-\cs_0)^2}\to 0$ for $n \to \infty$. The initial condition of the system of ordinary differential equations in \eqref{eq:system_ODE} then has initial condition $\cs_0$.
			
			A similar result on the block size spectrum of the Beta coalescent coming down from infinity has recently been obtained in \cite{Miller2023}. They obtain the convergence of the block size spectrum towards polynomials, hence the decay in the frequency of blocks of any size is of polynomial order, whereas in our case the decay of blocks is exponentially fast. Notably, the time rescaling in both models is the same, namely time is slowed down by $n^{a-1}$. Since the Beta coalescent comes down from infinity for $a\in(0,1)$ \cite{Schweinsberg2000CDI}, Miller and Pitters also provide a limiting result if one starts the coalescent with infinitely many lineages. In our case the multiplicative coalescent is not consistent in $n$, hence we do not provide such a result.
		\end{remark}
		%\textcolor{red}{Relevant papers might be \cite{Berestycki2010}, \cite{Miller2023}}
		It is a straightforward exercise to compute $\cs$ in an iterative fashion.
		\begin{corollary}
			For $i = 1,\ldots,d$ and $t \geqslant 0$,
			\begin{equation*}
				\cs_{t,i}^{} = p_i^{} (t) \ee^{-i \gamma t},
			\end{equation*}
			where $\gamma = \Gamma(\alpha) / (1 - \alpha)$ and $p_i^{}$ is a polynomial of degree $i-1$ for each $i$ between $1$ and $d$, see Figure \ref{fig:2} for a plot of $\cs_{t,1}^{},\dots, \cs_{t,4}^{}$. These polynomials can be computed via the recursion 
			\begin{equation*}
				p_i^{} (t) = \cs_{0,i}^{} + \sum_{\bl \in \Ps_{2+} (i) } \Gamma ( \alpha + |\bl| - 2) \prod_{j = 1}^d \frac{j^{\ell_j}}{\ell_j !}
				\int_0^t \prod_{j = 1}^d p_j(s)^{\ell_j} \dd s.
			\end{equation*}
			In particular $p_1(t)=\cs_{0,1}$ and
			\begin{align*}
				p_2(t)&= \cs_{0,2} + \frac{\Gamma(\alpha)}{2} \cs_{0,1}^2 t\\
				p_3(t)&= \cs_{0,3} + \Gamma(\alpha+1)\left( \frac{\cs_{0,1}^3 t}{3!} + 2 \cs_{0,1} \cs_{0,2} t + \Gamma(\alpha) \cs_{0,1}^3 \frac{t^2}{4} \right).
			\end{align*}
		\end{corollary}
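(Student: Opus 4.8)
The plan is to solve the triangular system \eqref{eq:system_ODE} explicitly by induction on $i$. The crucial structural observation is that every $\bl\in\Ps_{2+}(i)$ satisfies $\ell_i=0$, so the product $\prod_{j=1}^d(j\cs_{t,j})^{\ell_j}/\ell_j!$ involves only the coordinates $\cs_{t,1},\dots,\cs_{t,i-1}$; consequently $\tfrac{\dd}{\dd t}\cs_{t,i}$ is an explicit function of $\cs_{t,1},\dots,\cs_{t,i-1}$ together with the linear sink $-i\gamma\,\cs_{t,i}$, where $\gamma=\Gamma(\alpha)/(1-\alpha)$. For the base case $i=1$ one has $\Ps_{2+}(1)=\varnothing$, so \eqref{eq:system_ODE} reduces to $\tfrac{\dd}{\dd t}\cs_{t,1}=-\gamma\,\cs_{t,1}$, whence $\cs_{t,1}=\cs_{0,1}\ee^{-\gamma t}$, i.e. $p_1\equiv\cs_{0,1}$, a polynomial of degree $0$.

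For the inductive step, assume $\cs_{t,j}=p_j(t)\ee^{-j\gamma t}$ with $\deg p_j=j-1$ for all $j<i$. Since $\sum_j j\ell_j=\|\bl\|=i$ for every $\bl\in\Ps_{2+}(i)$, substituting the inductive hypothesis into the source term of \eqref{eq:system_ODE} produces the common factor $\ee^{-i\gamma t}$, so this term equals $\ee^{-i\gamma t}\,q_i(t)$ with
\[ q_i(t)\defeq\sum_{\bl\in\Ps_{2+}(i)}\Gamma(\alpha+|\bl|-2)\prod_{j=1}^d\frac{j^{\ell_j}}{\ell_j!}\,p_j(t)^{\ell_j}, \]
a polynomial in $t$. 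Inserting the ansatz $\cs_{t,i}=p_i(t)\ee^{-i\gamma t}$ into \eqref{eq:system_ODE}, the terms $-i\gamma\,p_i(t)\ee^{-i\gamma t}$ on both sides cancel and there remains $\dot p_i(t)=q_i(t)$; integrating from $0$ and imposing $p_i(0)=\cs_{0,i}$ yields precisely the asserted recursion. For the degree bound, observe that for $\bl\in\Ps_{2+}(i)$ one has $\deg\prod_j p_j^{\ell_j}\le\sum_j\ell_j(j-1)=\|\bl\|-|\bl|=i-|\bl|\le i-2$ since $|\bl|\ge2$, so each integrand has degree $\le i-2$ and hence $\deg p_i\le i-1$.

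Equality $\deg p_i=i-1$ follows from a short positivity argument: the degree-$(i-2)$ part of $q_i$ is a sum over the partitions $\bl$ with $|\bl|=2$ only (at least one of which exists for $2\le i\le d$, e.g. $2e_1$ for $i=2$ and $e_1+e_{i-1}$ for $i\ge3$), and its coefficients are products of the (inductively strictly positive) leading coefficients of $p_1,\dots,p_{i-1}$ with the strictly positive constants $\Gamma(\alpha+|\bl|-2)\prod_j j^{\ell_j}/\ell_j!$, so they cannot cancel; the induction starts from $p_1\equiv\cs_{0,1}$, whose leading coefficient $\cs_{0,1}$ is positive under the standing initial condition. Finally, the explicit expressions for $p_1,p_2,p_3$ are obtained by performing the elementary integrations in the recursion for $i\le3$. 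The argument is essentially bookkeeping; the only step demanding a moment's attention is this degree count, and no analytic input beyond Theorem~\ref{thm:lln} is needed.
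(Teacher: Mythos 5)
Your proof is correct and does precisely what the paper declares (but does not show) to be a ``straightforward exercise'': substitute the ansatz $\cs_{t,i}=p_i(t)\ee^{-i\gamma t}$ into the triangular system \eqref{eq:system_ODE}, observe that the constraint $\sum_j j\ell_j=i$ makes the exponential factor out, cancel the linear sink, integrate, and close the induction with a degree count. The positivity argument you add for $\deg p_i=i-1$ (tracking the $|\bl|=2$ partitions and the sign of leading coefficients) is a genuine addition, since the paper asserts the degree but gives no justification; it is correct.

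One remark on the last sentence of your argument. You defer the explicit expressions for $p_2,p_3$ to ``elementary integrations,'' but if you actually perform them you will find that the recursion does \emph{not} reproduce the paper's displayed formula for $p_3$. Carrying out the two relevant partitions $\bl=3e_1$ (with $|\bl|=3$) and $\bl=e_1+e_2$ (with $|\bl|=2$), the recursion gives
\[
p_3(t)=\cs_{0,3}+\Gamma(\alpha+1)\frac{\cs_{0,1}^3 t}{6}+2\Gamma(\alpha)\,\cs_{0,1}\cs_{0,2}\,t+\frac{\Gamma(\alpha)^2}{2}\cs_{0,1}^3 t^2,
\]
which is not the same as the paper's
$\cs_{0,3}+\Gamma(\alpha+1)\bigl(\tfrac{\cs_{0,1}^3 t}{3!}+2\cs_{0,1}\cs_{0,2}t+\Gamma(\alpha)\cs_{0,1}^3\tfrac{t^2}{4}\bigr)$:
the factor $\Gamma(\alpha+1)$ should not multiply the contributions coming from $\bl=e_1+e_2$ (those carry $\Gamma(\alpha+|\bl|-2)=\Gamma(\alpha)$, not $\Gamma(\alpha+1)$), and the $t^2$ coefficient is off by a factor as well. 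This appears to be a typo in the paper rather than a flaw in your argument, but your closing claim that the paper's displayed $p_3$ ``is obtained by performing the elementary integrations'' is, as stated, not accurate; it would be better to state the correct expression.
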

		\begin{figure}
			\centering
			\includegraphics[scale=0.65]{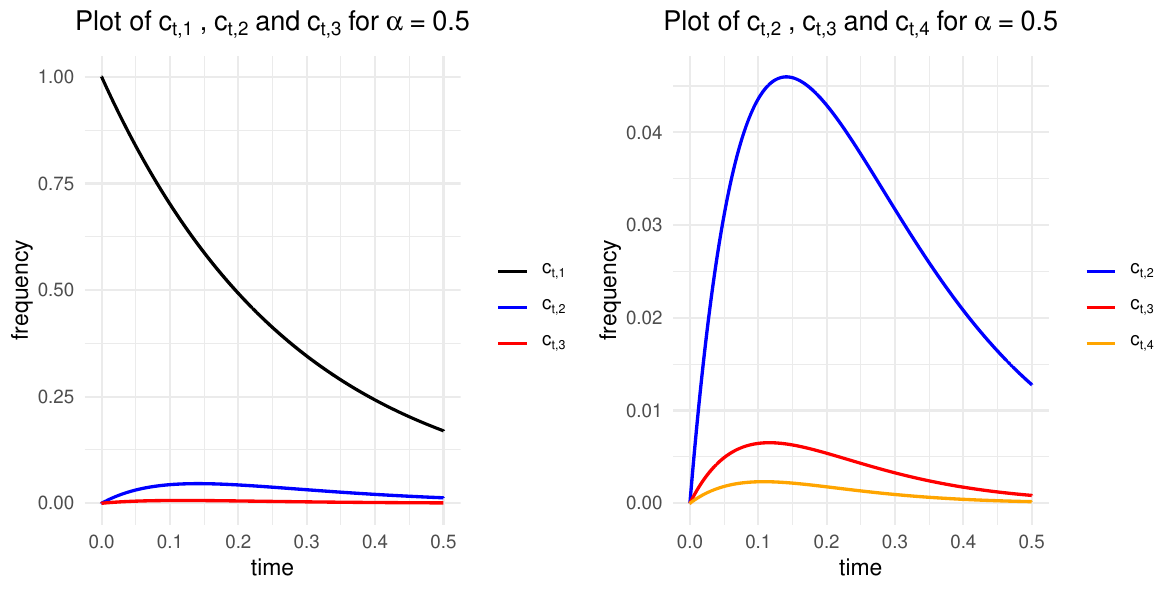}
			\caption{\label{fig:2} A plot of $\cs_{t,i}$ for $i=1,...,4$ for $\alpha=0.5$. The right hand side is a scaled version of the first picture excluding the blocks which contain exactly one element.}
		\end{figure}
		
		\begin{remark}
			Theorem~\ref{thm:lln} can be proved by computing the transition rates and using general theory~\cite{Kurtz1970,EthierKurtz1986}. However, in view of our proof of the functional limit theorem, we will work with a representation via Poisson integrals, see Subsection~\ref{subsec:poissonrep}, which also provides the appropriate coupling for $(\bC^n)_{n \geqslant 0}$ so that the convergence in probability in 
			Theorem~\ref{thm:fclt} holds.
			In fact, using this method, we obtain slightly more; we can show that for all $\varepsilon > 0$ and any $T > 0$, 
			\begin{equation*}
				\PP{\sup_{0 \leqslant t \leqslant T} |\bC^n_t - \cs_t^{}| \geqslant \varepsilon} = O(n^{\alpha - 1}) \, \textnormal{ as } n \to \infty.
			\end{equation*}
		\end{remark}
		The following theorem is a $L_2$ version of the law of large numbers, which plays an important role in our proof of the functional limit theorem below.
		\begin{theorem}[Law of large numbers --- $L_2$-version] \label{thm:llnl2}
			For all $T > 0$,
			\begin{equation*}
				\sup_{0 \leqslant t \leqslant T} \EE{ \| \bC_t^n - \cs_t^{}  \big \|_2^2} = O(n^{\alpha -1}).
			\end{equation*}
			
		\end{theorem}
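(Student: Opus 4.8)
The plan is to establish the $L_2$-bound via a Gronwall argument applied to the quantity $g_n(t) \defeq \EE{\|\bC^n_t - \cs_t\|_2^2}$, using the Poisson integral representation from Subsection~\ref{subsec:poissonrep} to obtain a semimartingale decomposition of $\bC^n$. Writing $\bC^n_t = \bC^n_0 + \int_0^t b_n(\bC^n_s)\,\dd s + M^n_t$, where $b_n$ is the (rescaled) drift obtained by integrating the jump sizes $-\bl/n + e_{\|\bl\|}\1_{\|\bl\|\leqslant d}$ against the rates in Eqs.~\eqref{lambdalarge}--\eqref{lambdasmall}, and $M^n$ is a martingale, the first step is to show that $b_n \to F$ uniformly on $E^n$ with an error of order $n^{\alpha-1}$, and that $F$ (the right-hand side of Eq.~\eqref{eq:system_ODE}) is Lipschitz on the compact simplex $\{\|\bc\|\leqslant 1\}$. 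The drift estimate is where the Beta-integral asymptotics enter: one expands $\prod_j(1-(1-u)^j)^{\ell_j} = u^{|\bl|}\prod_j j^{\ell_j} + O(u^{|\bl|+1})$, uses $\binom{nc_j}{\ell_j} = (nc_j)^{\ell_j}/\ell_j! + O(n^{\ell_j-1})$, and invokes Lemma~\ref{lem:integralestimate2} (and its analogues in Section~\ref{sect:calculus}) to evaluate $\int_0^1 u^{\alpha+|\bl|-3}(1-u)^{\cdots}\dd u$ as $\Gamma(\alpha+|\bl|-2)$ plus lower-order corrections, and to handle the factor $(1-(1-u)^{n-n\|\bc\|})$ — noting that the $\bl \in \Ps_{2+}(i)$ terms come from the ``$\leqslant$'' transitions while the linear death term $-\gamma i \cs_{t,i}$ comes from summing the single-block markings in the ``$>$'' transitions.

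Next I would control the martingale part. Since $M^n$ is a pure-jump martingale, $\EE{\|M^n_t\|_2^2} = \EE{\langle M^n\rangle_t}$ equals the time-integral of $\sum_{\bl} |{-}\bl/n + e_{\|\bl\|}\1|_2^2\,\lambda^n_\bl(\bC^n_s)$; each jump has size $O(1/n)$ in $\|\cdot\|_2$, and the total jump rate is $O(n \cdot n^{\alpha-1}) = O(n^\alpha)$ after the time-rescaling (this is exactly the ``macroscopic number of vertices involved per unit time'' heuristic from the text), so $\EE{\langle M^n\rangle_T} = O(n^\alpha \cdot n^{-2}) = O(n^{\alpha-1})$. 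Care is needed because the rate $\lambda^{n,>}_\bl$ involves $\binom{nc_j}{\ell_j}$ which can be as large as $n^{\ell_j}$, so one should bound the rate-times-jump-size-squared sum using $\|\bl\|\leqslant n$ together with the fact that the integral $\int_0^1 u^{\alpha+|\bl|-3}(1-u)^{\cdots}\dd u$ decays like $n^{-(\alpha+|\bl|-2)}$ for the pieces that contribute large binomial factors; combining these gives a uniform $O(n^{\alpha-1})$ bound on the quadratic variation rate over $E^n$, possibly after splitting off the finitely many ``small'' partitions from the ``large'' ones.

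With these pieces in hand, the core estimate is standard: using $\|\bC^n_t - \cs_t\|_2 \leqslant \|\bC^n_0-\cs_0\|_2 + \int_0^t\|b_n(\bC^n_s) - F(\cs_s)\|_2\,\dd s + \|M^n_t\|_2$, splitting $\|b_n(\bC^n_s)-F(\cs_s)\|_2 \leqslant \|b_n(\bC^n_s) - F(\bC^n_s)\|_2 + \|F(\bC^n_s)-F(\cs_s)\|_2 \leqslant Cn^{\alpha-1} + L\|\bC^n_s - \cs_s\|_2$, squaring, applying Cauchy--Schwarz to the time integral and Doob's $L_2$-inequality to $M^n$, and taking expectations, one arrives at
\begin{equation*}
g_n(t) \leqslant C'\big(n^{\alpha-1} + \EE{\|\bC^n_0-\cs_0\|_2^2}\big) + C'' \int_0^t g_n(s)\,\dd s,
\end{equation*}
so Gronwall's inequality yields $\sup_{t\leqslant T} g_n(t) = O(n^{\alpha-1})$ once $\EE{\|\bC^n_0-\cs_0\|_2^2} = O(n^{\alpha-1})$ (which holds for the deterministic initial condition $\cs_{0,i}=\delta_{i,1}$ since then $\bC^n_0 = \cs_0$ exactly for $n\geqslant1$). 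The main obstacle I anticipate is the uniform drift and quadratic-variation estimates over the whole state space $E^n$ — in particular, verifying that the potentially large binomial coefficients $\binom{nc_j}{\ell_j}$ are always tamed by the corresponding decay of the Beta integrals so that neither the drift error nor the martingale bracket picks up an unwanted power of $n$ — and bookkeeping the distinction between the ``$>$'' and ``$\leqslant$'' transition families so that the limit drift comes out exactly as $F_i$ in Eq.~\eqref{eq:system_ODE}.
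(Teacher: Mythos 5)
Your proposal is correct and follows essentially the same route as the paper: the Poisson/semimartingale decomposition from Eq.~\eqref{decomp}, the uniform $O(n^{\alpha-1})$ convergence of the drift (the paper's Lemma~\ref{lem:uniformapproximation}), an $L_2$ martingale bound via It\^o isometry/quadratic variation (the paper's Lemma~\ref{lem:apriorimartingaleestimate}), the Lipschitz property of $F_i$, and Gr\"onwall. The only inessential difference is your invocation of Doob's $L_2$-inequality, which is superfluous here since the supremum in the statement sits outside the expectation, so $\EE{\|M_t^n\|_2^2}\leqslant\EE{\|M_T^n\|_2^2}$ already suffices by the submartingale property.
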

		Note that Theorem \ref{thm:llnl2} does not directly imply Theorem \ref{thm:lln}.

		Our second main result is a functional limit theorem (FLT) for the fluctuations of $\bC^n$ around its deterministic limit $\cs$.
		
		\begin{theorem}[Functional Limit Theorem] \label{thm:fclt}
			For $n \in \Nb_+$, set $\sigma_n^{} \defeq n^{\frac{1-\alpha}{2-\alpha}}$ and $\bU_t^n \defeq \sigma_n^{} (\bC_t^n - \cs_t)$.
			Then, there exists a Poisson point process $\Ns$ on $[0,\infty) \times (0, \infty)$ with intensity
			\begin{equation*}
				\dd t \, u^{\alpha - 3}  \dd u,
			\end{equation*}
			defined on the same probability space as $\bC^n$,
			such that for any $T>0$
			\begin{equation*}
				\PP{ \sup_{0 \leqslant t \leqslant T} |U^n_{t,i} - U_{t,i}^{} | \geqslant \varepsilon} \to 0 \, \textnormal{as } n \to \infty.
			\end{equation*}
			Here, for all $i = 1,\ldots,d$, $\bU_i = (U_{t,i})_{t \geqslant 0}$ is a generalised Ornstein-Uhlenbeck process satisfying 
			\begin{equation*}
				\dd U_{t,i} = \langle \nabla F_i^{} (\cs_t^{}), \bU_t \rangle \dd t  + \dd M_{t,i}    ,
			\end{equation*}
			see \eqref{eq:system_ODE} for the definition of $F_i^{}$; the driving martingale $\bM_{i}$ is defined as
			\begin{equation*}
				M_{t,i} \defeq -i \int_{[0,t) \times [0, \infty)} \cs_{s,i} u \, \widetilde \Ns (\dd s \dd u),
			\end{equation*}
			where here and in the following $\widetilde{\Ns}$ denotes the compensation of $\Ns$.
			Put differently, $U_i$ satisfies
			\begin{equation*}
				\dd U_{t,i} = \langle \nabla F_i^{} (\cs_t^{}), \bU_t^{} \rangle \dd t - i \cs_{t,i}^{} \dd L_t ,
			\end{equation*}
			where $L$ is a spectrally positive Lévy process with Lévy measure $u^{\alpha - 3} \dd u$ driven by $\Ns$.
		\end{theorem}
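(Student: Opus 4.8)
The plan is to work throughout with the Poisson integral representation of $\bC^n$ set up in Subsection~\ref{subsec:poissonrep}, which exhibits $\bC^n$ as a semimartingale driven by the (time‑rescaled) Poisson random measure $N$: writing $b^n$ for the resulting drift — the vector field obtained by weighting the displacements $-\bl/n$ and $-\bl/n+e_{\|\bl\|}\1_{\|\bl\|\le d}$ against the rates \eqref{lambdalarge} and \eqref{lambdasmall} — one has $\bC^n_t=\bC^n_0+\int_0^t b^n(\bC^n_{s-})\,\dd s+\Xi^n_t$ with $\Xi^n$ the compensated Poisson integral (a càdlàg martingale, $\Xi^n_0=0$). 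Subtracting $\cs_t=\cs_0+\int_0^t F(\cs_s)\,\dd s$, using $\bC^n_0=\cs_0$ (both equal $e_1$), and multiplying by $\sigma_n$,
\[
\bU^n_t=\int_0^t\sigma_n\big(b^n(\bC^n_{s-})-F(\cs_s)\big)\,\dd s+\sigma_n\Xi^n_t .
\]
Everything then reduces to proving that (a) $\sigma_n\Xi^n\to M$ uniformly on $[0,T]$ in probability, with $M$ the martingale of the statement built from $\Ns$, and (b) $\sigma_n\big(b^n(\bC^n_{s-})-F(\cs_s)\big)=DF(\cs_s)\bU^n_s+R^n_s$ with $\sup_{[0,T]}\big|\int_0^t R^n_s\,\dd s\big|\to0$ in probability, $DF$ denoting the Jacobian of $F$. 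Granted (a) and (b), a pathwise Gronwall estimate together with well‑posedness of the linear equation $\dd U_{t,i}=\langle\nabla F_i(\cs_t),\bU_t\rangle\,\dd t+\dd M_{t,i}$ — a $d$‑dimensional Ornstein–Uhlenbeck type SDE with bounded continuous deterministic coefficients, solved by variation of constants — gives $\sup_{[0,T]}|\bU^n-\bU|\to0$ in probability; the alternative form with $-i\cs_{t,i}\,\dd L_t$ is just $M$ rewritten through the spectrally positive Lévy process $L_t=\int_{(0,\infty)}v\,\widetilde\Ns(\dd s\,\dd v)$.

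For (b) I would apply the coordinatewise mean value theorem, $b^n(\bC^n_{s-})-b^n(\cs_s)=A^n_s(\bC^n_{s-}-\cs_s)$ with $A^n_s$ an average of $Db^n$ on the segment $[\cs_s,\bC^n_{s-}]$, and feed in the beta‑integral asymptotics collected in Section~\ref{sect:calculus} (the behaviour of $\int_0^1 u^{\alpha-3+k}(1-u)^{n+\cdots}\,\dd u$ and the expansion of $\binom{n\bc}{\bl}/n^{|\bl|}$) to get $\|b^n-F\|_\infty=O(1/n)$ and $\|Db^n-DF\|_\infty=O(1/n)$ on a fixed neighbourhood of the compact arc $\{\cs_s:s\in[0,T]\}$. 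On the event $\{\sup_{[0,T]}|\bC^n_s-\cs_s|\le\varrho\}$, of probability $1-O(n^{\alpha-1})$ by Theorem~\ref{thm:lln}, this yields $|R^n_s|\le\big(\omega_{DF}(\varrho)+O(1/n)\big)|\bU^n_s|+O(\sigma_n/n)$, where $\omega_{DF}$ is the modulus of continuity of $DF$ and $\sigma_n/n=n^{-1/(2-\alpha)}\to0$; the term proportional to $|\bU^n|$ is absorbed by Gronwall once $\sup_{[0,T]}|\bU^n|=O_{\Pb}(1)$, an a priori bound produced by a first Gronwall step inside (a).

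Step (a) is the heart of the matter, and I would organise it by splitting the $u$‑range of $N$ into three regimes. \emph{Large mergers} ($u>\epsilon$): such an atom falls in $[0,T]$ with probability $O(n^{\alpha-1})\to0$, and off that event the contribution of $u>\epsilon$ to $\sigma_n\Xi^n$ is purely its compensator, of order $\sigma_n n^{\alpha-1}=n^{-(1-\alpha)^2/(2-\alpha)}\to0$; symmetrically, the $v>\sigma_n\epsilon$ part of $\Ns$ is negligible for $M$. \emph{Small mergers} ($u\le\delta/\sigma_n$): the elementary bound $\mathbb{E}[(\Delta\bC^n_i)^2\mid u]\lesssim u^2+\tfrac un(1-(1-u)^n)$ — a block of size $i$ can be destroyed only in a non‑silent merger — makes the predictable quadratic variation of this part $O(\delta^\alpha)+O(\sigma_n^2/n)$ on $[0,T]$, which is small uniformly in $n$ after first choosing $\delta$ and then $n$; Doob's $L^2$ inequality bounds the supremum, and $\langle M_i;v\le\delta\rangle_T=O(\delta^\alpha)$ likewise. \emph{Bulk} ($\delta/\sigma_n<u\le\epsilon$, i.e. $v:=\sigma_n u\in(\delta,\sigma_n\epsilon]$): now $nu\ge\delta\,n^{1/(2-\alpha)}\to\infty$, so with probability $\to1$ a block of size $>d$ is coloured and all coloured small blocks are swallowed by it, giving $\sigma_n\Delta\bC^n_i=-\tfrac{\sigma_n}{n}\,\mathrm{Bin}\big(\texttt{C}^{n}_{s-,i},p_i(u)\big)$; splitting this into its conditional mean and a centred Binomial, the centred part has quadratic variation $\lesssim\tfrac{\sigma_n}{n}\int\cs_{s,i}v\,v^{\alpha-3}\,\dd v\,\dd s=O(\sigma_n/n)\to0$, while, using $\texttt{C}^{n}_{s-,i}=n\cs_{s,i}+n\sigma_n^{-1}U^n_{s-,i}$, $p_i(u)=iu+O(u^2)$ and $u=v/\sigma_n$, the conditional mean equals $-i\cs_{s,i}v+O(v^2/\sigma_n)+O\!\big(\tfrac v{\sigma_n}|U^n_{s-,i}|\big)$, the $O(v^2/\sigma_n)$ error integrating to $O(\sigma_n^{\alpha-1})\to0$ and the $|\bU^n|$‑part being Gronwall‑absorbed. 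Hence, up to uniformly vanishing terms, the bulk of $\sigma_n\Xi^n_i$ is $-i\int\cs_{s,i}v\,\widetilde N^{(n)}(\dd s\,\dd v)$, where $N^{(n)}$ is the image of $N$ (time‑rescaled) under $(s,u)\mapsto(s,\sigma_n u)$ restricted to $v\le\sigma_n\epsilon$; as $n^{\alpha-1}\sigma_n^{2-\alpha}=1$, this has intensity $\dd s\,v^{\alpha-3}(1-v/\sigma_n)^{\beta-1}\,\dd v\to\dd s\,v^{\alpha-3}\,\dd v$. I would then realise, on (a suitable enlargement of) the probability space carrying $(\bC^n)_n$, a Poisson measure $\Ns$ with the limiting intensity and couple it to $N^{(n)}$ — matching atoms exactly on each window $[0,T]\times(\eta,W)$ (possible up to vanishing error by the standard coupling of Poisson measures with asymptotically equal intensities), handling $v\le\eta$ by the small‑jump bound and $v>W$ by the finite one‑sided mean $O(W^{\alpha-1})$ uniformly in $n$ — which gives $-i\int\cs_{s,i}v\,\widetilde N^{(n)}\to M_i$ uniformly in probability, hence (a).

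The step I expect to be the main obstacle is precisely this last coupling. Since $L$ is heavy‑tailed of index $2-\alpha\in(1,2)$, one has $\langle M_i\rangle_T=\infty$, so there is no $L^2$ control of the bulk and one is forced into a pathwise, atom‑by‑atom comparison of $N^{(n)}$ with $\Ns$, to be performed uniformly over $[0,T]$ and simultaneously with the Binomial‑concentration estimate, the ``a giant block is always coloured'' reduction, and the uniform‑in‑$n$ smallness of the small‑jump part — with the iterated limits ($n\to\infty$, then $\delta,\eta\downarrow0$ and $W\uparrow\infty$, then $\epsilon\downarrow0$) arranged consistently. A merely distributional statement would follow much more cheaply from tightness of $(\bU^n)$ in the Skorokhod $J_1$ topology together with uniqueness for the limiting martingale problem, but the convergence in probability in the statement makes the explicit coupling unavoidable. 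By comparison, the drift analysis (b) and the concluding Gronwall argument are routine given Section~\ref{sect:calculus} and Theorems~\ref{thm:lln}–\ref{thm:llnl2}.
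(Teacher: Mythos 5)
Your high-level strategy — express $\bC^n$ as drift plus a compensated Poisson martingale via the representation of Subsection~\ref{subsec:poissonrep}, linearise the drift around $\cs$, show the (rescaled) martingale converges to $M$, and close with Gronwall — matches the paper's. The decisive difference is how you treat the martingale. The paper proceeds in two steps: Lemma~\ref{lem:replacetheintegrand} first replaces the true integrand by $-iu\cs_{s,i}$ \emph{in $L^2$} (this works because it is the \emph{difference} of the two martingales that has small quadratic variation, not each separately), and Lemma~\ref{lem:replacingthenoise} then builds the coupling explicitly by realising $N_n$ as a thinning/superposition of $\Ns$: on $v<\sigma_n^{1/2}$ the correction $M^{\Delta,<}$ has quadratic variation $O(\sigma_n^{(\alpha-1)/2})$ and is controlled by Doob in $L^2$, while on $v\geqslant\sigma_n^{1/2}$ there are no atoms of $N_n^\Delta\cup\Ns_n'$ with probability tending to one, so the martingale collapses to its vanishing compensator. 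This sidesteps the obstacle you identify: although $\langle M_i\rangle_T=\infty$, the coupling ensures the \emph{discrepancy} between $\sigma_n M^n$ and $M$ is either $L^2$-small or identically zero off a rare event, so no atom-by-atom comparison over an expanding window and no iterated $\epsilon,\delta,\eta,W$ limits are needed. Your three-regime split in $u$ with conditional Binomial analysis in the bulk is morally sound and would likely get there, but it is substantially more work, and the piece you flag as the ``main obstacle'' is precisely where the paper's explicit superposition coupling is cleaner and more direct. Two smaller remarks: for the drift the paper exploits that each $F_i$ is a polynomial to get an exact second-order Taylor bound $|\mathbf{G}_t|\leqslant K\sigma_n\|\bC_t^n-\cs_t\|_\infty^2$, which only requires the a priori estimate $\sigma_n\|\bC_t^n-\cs_t\|_\infty^2\to 0$ in probability (Lemma~\ref{lem:fluctuationsaresmallenough}), strictly weaker than your claimed $\sup_{[0,T]}|\bU^n|=O_\Pb(1)$; and to obtain one $\Ns$ coupled simultaneously to the whole sequence $(N_n)$ it is considerably easier to construct all $N_n$ \emph{from} a given $\Ns$ (as the paper does) than to ``enlarge the space carrying $(\bC^n)_n$'' and match atoms for each $n$, since the latter must produce a single $\Ns$ consistent across $n$.
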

		
		\begin{remark} \label{rem:limic}
			We expect our results to carry over to the more general situation that $\Lambda(\{0\}) = \Lambda(\{1\}) = 0$ and  there exists
			$y_0^{} \leqslant 1$ such that 
			\begin{align}
				\Lambda(\dd y) = g(y) \dd y \textnormal{ for, } y \in [0,y_0^{}] \quad \textnormal{ and } \quad  \lim_{y \to 0+} g(y) y^{1 - \alpha} = A, \label{eq:assumption A}
			\end{align}
			for some $A \in (0,\infty)$ and $\alpha \in (0,1)$; see Assumption (A) in \cite{Limic2015}. However, to keep technicalities in check, we restrict ourselves to measures with densities of the form \eqref{betadist}.
			
			In particular, a result in this spirit was obtained for the Beta coalescent in \cite{Limic2015}, where it is shown that the fluctuations of the block counting process of the Beta coalescent around its deterministic limit fulfils a functional limit theorem. More precisely, under the assumption in \eqref{eq:assumption A}, they prove that the fluctuations are given by $(2-\alpha)$ stable process of Ornstein-Uhlenbeck type, see Theorem 1.2 in \cite{Limic2015}. However, note that in their work they are able to start the coalescent with infinitely many lineages.
		\end{remark}
		
		\begin{remark}
			In general, it is well-known 
			(see~\cite[Exercise~6.2.10]{Applebaum2009}) that a time-inhomogeneous $\Rb^d$-valued SDE of the form
			\begin{equation*}
				\dd Y(t) = b \big ( t, Y(t)    \big ) \dd t
				+ \sigma \big (t, Y(t-)   \big ) \dd B(t)
				+ \int_{|x| < c} F\big ( t,Y(t-),x \big ) \, \widetilde N (\dd t, \dd x) 
				+ \int_{|x| \geqslant c} F \big ( t,Y(t-),x \big ) \, N (\dd t, \dd x) 
			\end{equation*}
			has a unique solution if the coefficients satisfy the Lipschitz-condition 
			\begin{equation*}
				\begin{split}
					&| b(t,y_1^{}) - b(t,y_2^{}) |^2 
					+
					\| a(t,y_1^{},y_1^{}) - 2 a(t,y_1^{},y_2^{}) + a(t,y_2^{},y_2^{}) \| \\
					& \quad +
					\int_{|x| < c} |F(t,y_1^{},x) - F(t,y_2^{},x)|^2 \, \nu (\dd x)
					\leqslant K_1 (t) |y_1^{} - y_2^{}|^2.
				\end{split}
			\end{equation*}
			and the growth condition
			\begin{equation*}
				| b(t,y) |^2 + \| a(t,y,y) \| + \int_{|x| < c}
				|F(t,y,x)|^2 \, \nu (\dd x) \leqslant K_2 (t) (1 + |y|^2).
			\end{equation*}
			Here, $a(t,y_1^{},y_2^{}) = \sigma(t,y_1^{}) \sigma(t,y_2^{})^T$ 
			and $K_1,K_2$ are locally bounded and measurable. 
			
			Since there is no diffusion term in our SDE for $\bU = (\bU_t)_{t \geqslant 0}$ in Theorem~\ref{thm:fclt} and the martingale does not depend on 
			$\bU$, it suffices to check that
			\begin{equation*}
				| DF(\cs_t^{}) (y) | \leqslant K_1 (t) |y|
			\end{equation*}
			where $DF(\cs_t^{})$ denotes the Jacobian of $F$ in 
			$\cs_t^{}$. But this is obvious because $F$ is a polynomial (and therefore smooth), restricted to a compact set whence its Jacobian is bounded. 
			
		\end{remark}
		
		\begin{remark}\label{rem:Bloznelis}
			In the literature dealing with subgraph counts, stable limits only appeared recently \cite{Bloznelis2024}, whereas normal 
			limits and normal functional limit theorems have been observed for the Bernoulli random graph case for example in \cite{Janson1990}. In particular, in \cite{Rucinski1988} a Bernoulli random graph is considered and it is shown that under suitable conditions the subgraph counts are either asymptotically normal or Poisson. In \cite{Janson1990} a dynamic Bernoulli random graph is considered, where edges appear randomly as time passes. There, a functional limit theorem for the subgraph count is proved, where the limiting process is Gaussian. In contrast, our block size spectrum for the beta coalescent with momentum
			does not exhibit Gaussian fluctuations and instead exhibits stable fluctuations (Theorem \ref{thm:fclt}).
			
			In \cite{Bloznelis2024} the authors consider a superposition of Bernoulli random graphs to construct a random graph with power-law degree distribution. They derive a stable limit theorem for the number of subgraph counts in this random graph. Even though our model and theirs are not identical, they similarly consider a graph which is constructed as a superposition of Bernoulli random graphs. In contrast to our work, their setting is entirely static. On the other hand, they proved a stable limit theorem even for cases where the edge probabilities in the superposed Bernoulli random graphs are strictly smaller than $1$. 
			
			Their observation, together with ours, raises possible questions for further work. In particular, does Theorem \ref{thm:fclt} also hold if one considers subgraph counts instead of the block size spectrum? Furthermore, so far we have only covered the case where the intensity $\mu(\dd t,\dd u, \dd q)$ of the driving Poisson point process is concentrated on $[0,\infty) \times (0,1] \times\{1\}$, i.e. the edge probability $q$ is always $1$.
			Does Theorem \ref{thm:fclt} still apply for more general measures for the edge probability $q$? 
		\end{remark}

		\section{Proofs}\label{sect:proofs}
		
		\subsection{Integral/Poisson representation} \label{subsec:poissonrep}
		The main device that is used in the proofs of Theorems~\ref{thm:lln}, \ref{thm:llnl2} and~\ref{thm:fclt} is a representation of the normalised block size spectrum $\bC^n$ in terms of an integral equation with respect to a Poisson point process similar to $N$ (see \eqref{eq:definition_of_N}), but augmented by additional information regarding which blocks are affected by each merger. For this, we let $N^E$ (the `$E$' stands for `extended') be a Poisson point process on
		$[0,\infty) \times [0,1] \times [0,1]^\Nb$
		with intensity
		\begin{equation*}
			\mu^E (\dd t, \dd u, \dd \bx) \defeq \dd t \frac{\Lambda(\dd u)}{u^2} \dd \bx,
		\end{equation*}
		where $\dd \bx$ is the uniform distribution on $[0,1]^\Nb$. Note that we may construct $N^E$ by first constructing the process $N$ as in
		Eq.~\eqref{poissonintensity}, and then sampling, independently for each atom $(t,u)$, a third component $\bx$ as a realisation of a random variable $\bX = (X_1,X_2,\ldots)$ where $X_i^{}, i \in \Nb$ are independent random variables, uniformly distributed on $[0,1]$. 
		
		Keeping with our habit of referring to atoms $(t,u)$ of $N$ as $u$-mergers, we refer to an atom $(t,u,\bx)$ of $N^E$ as a $(u,\bx)$-merger.  
		The idea is to index (over $\Nb$) the blocks of $\Pi^{n}$ and their elements in such a way that upon an $(u,\bx)$-merger, the $i$-th vertex participates in a merger if and only if $x_i^{} \leqslant u$. 
		
		Let us be a bit more precise. By exchangeability, we can index the blocks of $\Pi^n$ without loss of generality so that, whenever $\bC_t^n = \bc$, say, 
		$\pi = \Pi^{n}_{tn^{\alpha-1}}$ has the following form.
		\begin{itemize}
			\item
			The $n c_1^{}$ singleton blocks of $\pi$ are 
			$\{1\},\{2\},\ldots,\{nc_1^{} \}$.
			\item
			The $n c_2^{}$ blocks of  $\pi$ of size $2$ are 
			$\{nc_1^{} + 1, nc_1^{} + 2\}, \{nc_1^{} + 3, nc_1^{} + 4 \}, \ldots, \{ n c_1^{} + 2 nc_2^{} - 1, nc_1^{} + 2 nc_2^{}   \}$.
			\item
			In general, the $k$-th block of size $i$ is given by $I_{i,k} \defeq \{S_i + (k-1)i + 1,\ldots,S_i + ki \}$ with
			$S_i \defeq nc_1^{} + 2nc_2^{} + \ldots + (i-1)nc_{i-1}^{}$.
		\end{itemize} 
		Consequently, for any $1 \leqslant i \leqslant d$ and $1 \leqslant k \leqslant nc_i^{}$, 
		\begin{equation*}
			B_{i,k} \defeq \big \{ (u,\bx) \in [0,1] \times [0,1]^\Nb : \exists j \in I_{i,k} \textnormal{ s.t. } x_j^{} \leqslant u         \big \}
		\end{equation*}
		is the set of all $(u,\bx)$ for which the $k$-th block of size $i$ is marked by an  $(u,\bx)$-merger. Denoting for integers 
		$a \leqslant b$ the discrete interval $\{a,a+1,\ldots,b\}$ by
		$[a:b]$,
		we also define 
		\begin{equation*}
			B_{\geqslant} \defeq \big \{ (u,\bx) \in [0,1] \times [0,1]^\Nb : \exists \, j \in [ \|\bc\|n + 1 : n ]  \textnormal{ s.t. } x_j^{} \leqslant u                                  \big \},
		\end{equation*}
		the set of all $(u,\bx)$ for which some block with more than $d$ vertices is marked. Note that $B_{j,q}$ and $B_\geqslant$ all depend implicitly on $\bc$ as well as $n$.

		The advantage of this augmented representation is that, having fixed a realisation of $N^E$, there is no additional randomness; each $(u,\bx)$-merger induces a unique 
		(possibly trivial) transition of $\bC^n$. For $1 \leqslant i \leqslant d$, we will write
		$f_i^{n,+} (\bc,u,\bx)$ and $f_i^{n,-} (\bc,u,\bx)$ for the normalised number of blocks of size $i$ that are gained and lost upon an $(u,\bx)$-merger. 
		
		Clearly, every marked block of size $i$ is lost, except when no other vertex outside of that block is coloured. Therefore,
		\begin{equation} \label{fminusdef}
			f_i^{n,-} (\bc,u,\bx) = \frac{1}{n} \sum_{k = 1}^{nc_i^{}} \1_{B_{i,k}} (u,\bx) \Big ( 1 - \prod_{r \in [n] \setminus I_{i,k} } \1_{x_r^{} > u}    \Big ).
		\end{equation}
		Notices that whenever the term in brackets is $0$ for some 
		$k \in [1:n c_i^{}]$, the entire sum vanishes. 
		
		On the other hand, we gain a block of size $i$ whenever, for some $\bl \in \Ps_{2+}(i)$, exactly $\ell_j^{}$ blocks of size $j$ are marked, and no other vertex participates. Hence,
		\begin{equation} \label{fplusdef}
			f_i^{n,+} (\bc,u,\bx) = \frac{1}{n} \sum_{\bl \in \Ps_{2+} (i)}  \1_{B_{\geqslant}^c} (u,\bx)     \prod_{j = 1}^d \Big ( \sum_{\substack{K \subseteq [nc_j^{}] \\ |K| = \ell_j^{} } }   
			\prod_{q \in K} \1_{B_{j,q}} (u,\bx) \prod_{r \in [nc_j^{}] \setminus K} \1_{B_{j,r}^c} (u,\bx) \Big ).
		\end{equation}
		
		%Note that $f_i^{n,-}$ only depends on the number of blocks of size $i$, and $f_i^{n,+}$ only depends on the numbers of blocks 
		%that are strictly smaller than $i$. \fred{Not really. The RHS of Eq.~\eqref{fplusdef} depends also on 
			%	$\|\bc\|$ via the indicator of $B_{\geqslant n}^c$.}
		%We will sometimes suppress the irrelevant components in our notation. That is, we will write
		%$f_i^{n,-} (c_i^{},u,\bx)$ and $f_i^{n,+} (c_1^{},\ldots,c_{i-1}^{},u,\bx)$ instead of $f_i^{n,-}(\bc,u,\bx)$ and $f_i^{n,+}(\bc,u,\bx)$. 
		
		Finally, we also write $$f_i^{n}(\bc,u,\bx) \defeq f_i^{n,+} (\bc,u,\bx) - f_i^{n,-} (\bc,u,\bx),$$ for the net change in the (normalised) number of blocks of size $i$ caused by a 
		$(u,\bx)$-merger. To account for the time change, we write $N_n^E$ for the image of $N^E$ under the map $(t,u,\bx) \mapsto (t n^{1 - \alpha}, u, \bx)$. 
		Note that by the Poisson mapping theorem (see, for example, Proposition 11.2 in \cite{Privault2018}), $N_n^E$ is a Poisson point process with intensity 
		\begin{equation*}
			\mu_n^E (\dd t, \dd u, \dd \bx) \defeq n^{\alpha -1} \mu^E (\dd t, \dd u, \dd \bx) = n^{\alpha -1} \dd t \frac{\Lambda(\dd u)}{u^2} \dd \bx. 
		\end{equation*}
		
		With this, we now have for all $n \in \Nb_+$, $t \geqslant 0$ and $1 \leqslant i \leqslant d$, 
		\begin{equation*}
			C_{t,i}^{n} =  \delta_{i,1} + \int_{[0,t) \times [0,1] \times [0,1]^\Nb} f_i^{n} \big ( \bC_{s-}^{n}, u, \bx   \big ) N_n^E (\dd s, \dd u, \dd \bx),
		\end{equation*}
		recalling the initial condition $C_{0,i}^n = \delta_{i,1}$.
		By decomposing $N_n^E$ into the compensated Poisson point process $\widetilde N_n^E$ and its intensity $\mu_n^E$, we arrive at 
		\begin{equation} \label{decomp}
			C_{t,i}^n = \delta_{i,1} + \int_{[0,t)} F_i^n  (\bC_{s}^n) \dd s + \widehat M_{t,i}^n,
		\end{equation}
		where, for all $\bc \in E^n$, 
		\begin{equation} \label{Fdef}
			F_i^n (\bc) \defeq n^{\alpha - 1} \int_{[0,1] \times [0,1]^\Nb} f_i^{n} \big ( \bc , u, \bx   \big ) \,
			\frac{\Lambda ( \dd u)}{u^2} \dd \bx
		\end{equation}
		and
		\begin{equation} \label{mhatdef}
			\widehat M_{t,i}^n \defeq  \int_{[0,t) \times [0,1] \times [0,1]^\Nb} f_i^{n} \big ( \bC_{s-}^{n}, u, \bx   \big ) \widetilde N_n^E (\dd s, \dd u, \dd \bx).
		\end{equation}
		We also write $\widehat \bM_t^n \defeq (\widehat M_{t,1}^n,\ldots,\widehat M_{t,d}^n)$ and note that $\widehat \bM^n \defeq (\widehat \bM_t^n)_{t \geqslant 0}$ is a martingale with respect to the filtration generated
		by $N_n^E$.

		\subsection{Structure of the proof / Heuristics}
		Before diving into the computations, we start by outlining the structure of our arguments. 
		In order to prove the law of large numbers, we will show that the martingale part $\widehat \bM^n_t$ vanishes as $n \to \infty$.
		We will also show (see Lemma~\ref{lem:uniformapproximation}), that the (uniform) limit of $F_i^n$ in Eq.~\eqref{Fdef} is given by 
		$F_i^{}$ in Theorem~\ref{thm:lln}. Taking the limit on both sides of 
		Eq.~\eqref{decomp} and exchanging the limit with the integral, we expect $\cs_{t,i}^{} = \lim_{n \to \infty} C_{t,i}^n$ to satisfy the integral equation
		\begin{equation*}
			\cs_{t,i}^{} = \int_0^t F_i(\cs_t^{}) \dd s
		\end{equation*}
		or, equivalently, the ordinary differential equation
		\begin{equation*}
			\frac{\dd }{\dd t} \cs_{t,i}^{} = F_i (\cs_t^{}).
		\end{equation*}
		Controlling the $L^2$ norm of the martingale part via It\^{o} isometry will lead to the $L^2$ version of the law of large numbers in Theorem~\ref{thm:llnl2}.
		
		In order to obtain the functional limit theorem (FLT), we need a finer understanding of the asymptotics of the martingale 
		part. To this end, it is crucial to observe that due to slowing down time by a factor of $n^{\alpha -1}$, we will only see $u$-mergers for very 
		small $u$ and may neglect effects that are of higher order in $u$. In particular, recall that the probability that any given block of size $i$ is marked (and therefore lost) during an $u$-merger is
		$i u + O(u^2)$ (see Eq.~\eqref{markingprobability}) and the probability that such a block is gained is of order $u^2$ (because at least two smaller blocks need to be marked) and thus negligible.
		We therefore expect the net change in the (normalised) number of blocks of size $i$, due to a $u$-merger at rescaled time $t$, to be roughly $-iu \cs_{t,i}^{}$, which suggests the 
		approximation
		\begin{equation} \label{integrandapproximated}
			\widehat M_{t,i}^n \approx M_{t,i}^n \defeq - i \int_{[0,t) \times [0,1]} u \cs_{s,i}^{} \widetilde N_n(\dd s, \dd u)
			=
			- i \int_{[0,t) \times [0,1] \times [0,1]^\Nb} u \cs_{s,i}^{} \widetilde N_n^E (\dd s, \dd u, \dd \bx ),
		\end{equation} 
		where $\widetilde N_n$ is a compensated Poisson point process on $[0,\infty) \times (0,1)$ with intensity
		\begin{equation*}
			\mu_n^{} (\dd t, \dd u) \defeq n^{\alpha-1} \mu(\dd t, \dd u) \defeq n^{\alpha-1} \dd t \frac{\Lambda(\dd u)}{u^2}.
		\end{equation*}
		Next, we investigate the asymptotics of the integral on the right-hand side of Eq.~\eqref{integrandapproximated} and give a heuristic for the scaling 
		$\sigma_n^{}$ in Theorem~\ref{thm:fclt}. First, note that due to the presence of the factor $n^{\alpha -1}$ in the density $\mu_n^{}$, $\bM^n$ will vanish as 
		$n \to \infty$, hence we need to scale it up by a factor $\sigma_n^{}$ to obtain a nontrivial limit. We see that
		\begin{equation*}
			\sigma_n^{} M_{t,i}^n = -i \int_{[0,t) \times [0,1]} (\sigma_n^{} u) \cs_{s,i}^{} \widetilde N_n (\dd s, \dd u) = 
			-i \int_{[0,t) \times [0,\sigma_n^{}]} u \cs_{s,i}^{} \widetilde N_n^\ast (\dd s, \dd u),
		\end{equation*}
		where $N_n^\ast$ is the image of $N_n$ under the map $(t,u) \mapsto (t, \sigma_n^{} u)$, which is by the Poisson mapping theorem
		(see again~\cite{Privault2018}) a Poisson point process with intensity 
		\begin{equation*}
			n^{\alpha -1} \dd t  (u/\sigma_n^{})^{\alpha - 3} (1 - u/\sigma_n^{})^{\beta -1} \dd (u / \sigma_n^{}) = 
			\sigma_n^{2 - \alpha} n^{\alpha - 1}  \dd t u^{\alpha -3} (1- u/\sigma_n^{})^{\beta -1} \dd u,
		\end{equation*}
		which converges to the intensity of $\Ns$ in Theorem~\ref{thm:fclt} upon choosing $\sigma_n^{} = n^{(1-\alpha)/(2-\alpha)}$.
		
		\subsection{Rigorous argument}
		To prepare our proof of Theorem~\ref{thm:lln}, we first prove that the functions $F_i^n$ in Eq.~\eqref{Fdef} converge uniformly to $F_i^{}$ given in Theorem~\ref{thm:lln}. In view of later applications in the proof of 
		Theorem~\ref{thm:fclt}, we need 
		some quantitative control.
		\begin{lemma} \label{lem:uniformapproximation}
			For $1 \leqslant i \leqslant d$, we have
			\begin{equation*}
				\sup_{\bc \in E^n} | F_i^n (\bc) - F_i^{} (\bc) | = O(n^{\alpha -1}).
			\end{equation*}
		\end{lemma}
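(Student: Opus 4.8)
The plan is to compute $F_i^n(\bc)$ explicitly from its definition in Eq.~\eqref{Fdef}, compare it term-by-term with $F_i(\bc)$, and track the error uniformly in $\bc\in E^n$. First I would substitute the expressions \eqref{fminusdef} and \eqref{fplusdef} for $f_i^{n,-}$ and $f_i^{n,+}$ into \eqref{Fdef}. Since $\mu_n^E = n^{\alpha-1}\,\dd t\,\frac{\Lambda(\dd u)}{u^2}\,\dd\bx$ and the only $\bx$-dependence is through the indicator sets $B_{i,k}$, $B_{\geqslant}$, I would integrate out $\bx$: the probability under the uniform law on $[0,1]^\Nb$ that a fixed block of size $j$ is marked is exactly $p_j(u)=1-(1-u)^j$, that a block is \emph{not} marked is $(1-u)^j$, and these events are independent across disjoint blocks. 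This turns the $\bx$-integral in the loss term into a sum over $k$ of $p_i(u)\big(1-(1-u)^{n-i}\big)$, and the $\bx$-integral in the gain term into $\1_{B_\geqslant^c}$-probability $(1-u)^{n-\|\bc\|n}$ times $\prod_j \binom{nc_j}{\ell_j} p_j(u)^{\ell_j}(1-u)^{j(nc_j-\ell_j)}$. After the dust settles, $nF_i^n(\bc)$ equals a sum over $\bl\in\Ps_{2+}(i)$ of $n^{\alpha-1}\binom{n\bc}{\bl}\int_0^1 \prod_j p_j(u)^{\ell_j} u^{-2}(1-u)^{n\|\bc\|-\|\bl\|}\big(1-(1-u)^{n-n\|\bc\|}\big)\,\Lambda(\dd u)$ minus $n^{\alpha-1} nc_i \int_0^1 p_i(u)\big(1-(1-u)^{n-i}\big)u^{-2}\Lambda(\dd u)$ — i.e.\ exactly the rates in \eqref{lambdalarge}, \eqref{lambdasmall} reassembled, which is a useful consistency check.

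Next I would do the asymptotic analysis of each piece. For the loss term, $F_i^{n,-}(\bc) = n^{\alpha-1} c_i \int_0^1 (1-(1-u)^i)(1-(1-u)^{n-i}) u^{\alpha-3}(1-u)^{\beta-1}\dd u$; using $1-(1-u)^i = iu + O(u^2)$ near $0$ and Lemma~\ref{lem:integralestimate2} (the integral estimate quoted just before the definition of $\bC^n$), the leading term is $c_i\cdot i \cdot \frac{\Gamma(\alpha)}{1-\alpha} + O(n^{\alpha-1})$, uniformly since $c_i\in[0,1]$. For the gain term with a fixed partition $\bl$, I would use $\binom{nc_j}{\ell_j} = \frac{(nc_j)^{\ell_j}}{\ell_j!}(1+O(1/n))$, combine $\prod_j (nc_j)^{\ell_j} = n^{|\bl|}\prod_j c_j^{\ell_j}$, pull $p_j(u)^{\ell_j} = (ju)^{\ell_j}(1+O(u))$ so that $\prod_j p_j(u)^{\ell_j} = u^{|\bl|}\prod_j j^{\ell_j}(1+O(u))$, and note $n^{\alpha-1}\cdot n^{|\bl|}\cdot u^{|\bl|} u^{\alpha-3} = n^{\alpha-1+|\bl|} u^{\alpha-3+|\bl|}$, so that after substituting and again invoking Lemma~\ref{lem:integralestimate2} for $\int_0^1 u^{\alpha-3+|\bl|}(1-u)^{\text{(large power)}}\dd u \sim \Gamma(\alpha+|\bl|-2)\, n^{-(\alpha+|\bl|-2)}$ — here one uses $|\bl|\geqslant 2$ so the exponent $\alpha-3+|\bl|\geqslant\alpha-1>-1$ and the integral converges — the powers of $n$ cancel and the leading term is $\prod_j \frac{(jc_j)^{\ell_j}}{\ell_j!}\Gamma(\alpha+|\bl|-2)$, matching $F_i(\bc)$. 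Summing the finitely many partitions $\bl\in\Ps_{2+}(i)$ and combining with the loss term gives $F_i^n(\bc) = F_i(\bc) + O(n^{\alpha-1})$.

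The main obstacle is making the error bounds genuinely \emph{uniform} in $\bc$ and controlling the factor $(1-u)^{n\|\bc\|-\|\bl\|}$ (respectively $(1-u)^{n-\|\bl\|}$ in the "nothing else" term), whose exponent depends on $\bc$ and can be anywhere from $0$ to $\sim n$. The cleanest way is to bound $(1-u)^{n\|\bc\| - \|\bl\|}(1-(1-u)^{n-n\|\bc\|}) \leqslant (1-u)^{n\|\bc\|-\|\bl\|}$ and split the $u$-integral at a threshold like $u = n^{-1+\epsilon}$: on $(0,n^{-1+\epsilon}]$ the factor $(1-u)^{\text{exponent}}$ is between a constant and $1$ and the Taylor expansions of $p_j(u)$ have uniformly controlled remainders, so Lemma~\ref{lem:integralestimate2} applies with uniform constants; on $(n^{-1+\epsilon},1]$ one uses crude bounds ($u^{\alpha-3+|\bl|}$ is bounded there, $\Lambda$ is finite) to see the contribution is $O(n^{\alpha-1})$ or smaller. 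A subtle point requiring care is the "nothing else" / $\lambda^{n,\leqslant}$ contribution to the \emph{loss} of blocks of size $i$ when $\|\bl\|=i$: a marked block of size $i$ with no other vertex coloured does \emph{not} disappear, which is precisely the $\prod_{r\in[n]\setminus I_{i,k}}\1_{x_r>u}$ correction in \eqref{fminusdef}; I would check that this correction contributes only $O(n^{\alpha-1})$ since it forces \emph{all} of the remaining $n-i$ vertices to be uncoloured, giving an extra factor $(1-u)^{n-i}$ that makes the $u$-integral $O(n^{-(\alpha-1)}\cdot\text{something small})$ — more carefully, $\int_0^1 iu\,(1-u)^{n-i}u^{\alpha-3}(1-u)^{\beta-1}\dd u = O(n^{-\alpha})$ by Lemma~\ref{lem:integralestimate2}, hence this term is $n^{\alpha-1}\cdot O(n^{-\alpha}) = O(n^{-1})$, absorbed into the error. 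Collecting all pieces and using that there are only finitely many ($d$, and $|\Ps_{2+}(i)|$) terms, each with a uniform $O(n^{\alpha-1})$ error, completes the proof.
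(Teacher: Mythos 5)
Your plan is the same as the paper's: split $F_i^n = F_i^{n,+}-F_i^{n,-}$, integrate out $\bx$ using the independence of the marking indicators across disjoint index sets, Taylor-expand $p_j(u)$ and $\binom{n\bc}{\bl}$, and apply the auxiliary integral lemmas. The first two paragraphs are essentially correct and recover the paper's computation, with one miscitation: for the gain term you need $\int_0^1 u^{k+\alpha-3}(1-u)^{n+\theta}\,\dd u = \Gamma(k+\alpha-2)\,n^{2-\alpha-k}+O(n^{1-\alpha-k})$ with $k=|\bl|\geqslant 2$, which is Lemma~\ref{lem:integralestimate1}, not Lemma~\ref{lem:integralestimate2}.

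Your third paragraph, however, over-engineers a difficulty that isn't there and then introduces a genuine error. The ``obstacle'' you describe — the $\bc$-dependent exponent $(1-u)^{n\|\bc\|-\|\bl\|}$ — does not survive into $F_i^{n,+}$: once the $\1_{B_\geqslant^c}$ factor $(1-u)^{n-n\|\bc\|}$ is combined with $\prod_j(1-u)^{j(nc_j-\ell_j)}$, the exponents collapse to $(1-u)^{n-\|\bl\|}$, which is $\bc$-independent (this is exactly the first display of the paper's proof). The remaining $\bc$-dependence sits only in $\binom{n\bc}{\bl}$ and $c_i$, and the error bounds in Lemmas~\ref{lem:integralestimate1} and~\ref{lem:integralestimate2} are $\bc$-free, so uniformity is automatic and the proposed split at $u=n^{-1+\epsilon}$ is unnecessary. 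More seriously, your ``subtle point'' asserts that
$\int_0^1 iu\,(1-u)^{n-i}u^{\alpha-3}(1-u)^{\beta-1}\,\dd u = O(n^{-\alpha})$;
this is false — the integrand is $\sim iu^{\alpha-2}$ near $u=0$, and $\alpha-2<-1$ for $\alpha\in(0,1)$, so the integral \emph{diverges} (and Lemma~\ref{lem:integralestimate1} only applies for $k\geqslant 2$, hence cannot be invoked with $k=1$). The decomposition ``marked-and-lost $=$ marked $-$ marked-but-no-other-colour'' fails because each piece individually has a divergent $u$-integral; only the full product $p_i(u)\bigl(1-(1-u)^{n-i}\bigr)\sim i(n-i)u^2$ near $0$ is integrable against $u^{\alpha-3}$. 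Your own loss-term computation in the second paragraph keeps these together and so is fine; just drop the attempt to isolate the ``correction.''
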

		Here and in the following, $O(\cdot)$ refers to the limit $n\to \infty$ unless mentioned otherwise. Additionally, $K>0$ will denote some constant which precise value might change from line to line.
		\begin{proof}
			Recall the definition of $f_i^{n,\pm}$ in Eqs.~\eqref{fminusdef} and \eqref{fplusdef}. Define accordingly, mimicking Eq.~\eqref{Fdef},
			\begin{equation*}
				F_i^{n,\pm}(\bc) \defeq n^{\alpha -1} \int_{[0,1] \times [0,1]^\Nb} f_i^{n,\pm} (\bc,u,\bx) \frac{\Lambda(\dd u)}{u^2} \dd \bx.
			\end{equation*}
			Let also
			\begin{equation*}
				F_i^+ (\bc) \defeq \sum_{\bl \in \Ps_{2+}(i)} \prod_{j=1}^d \frac{(jc_j^{})^{\ell_j^{}}}{\ell_j^{} !} \Gamma(\alpha + |\bl| - 2) 
			\end{equation*}
			and
			\begin{equation*}
				F_i^-(\bc) \defeq \frac{\Gamma(\alpha)}{1-\alpha} i c_i^{}.
			\end{equation*}
			We will separately show that 
			\begin{equation*}
				\sup_{\bc \in E^n} | F_i^{n,\pm} (\bc) - F_i^{\pm} (\bc) | = O(n^{\alpha -1}).
			\end{equation*}
			Using the definition of $f_i^{n,+}$ (see \eqref{fplusdef}) and the fact that $\1_{B_{j,q}}$ depends only on $x_i^{}$ with $i \in I_{j,q}$, and $I_{j,q} \cap I_{h,r} = \varnothing$ for $j \neq h$, and that $\1_{B_\geqslant}$ (and hence $\1_{B_\geqslant^c}$ ) only depends on $[\|\bc\|n + 1,\ldots,n]$, which is disjoint from all $I_{j,q}$, we get for $i \in [2:d]$, recalling the definition of $p_j^{} (u)$ in
			Eq.~\eqref{markingprobability}
			\begin{equation} \label{saiodfc}
				\begin{split}
					&F_i^{n,+}(\bc)	= n^{\alpha-1} \int_{[0,1] \times [0,1]^\Nb} f_i^{n,+} (\bc,u,\bx) \frac{\Lambda(\dd u)}{u^2} \dd \bx \\ 
					&\quad  = n^{\alpha - 2} \sum_{\bl \in \Ps_{2+}(i)} \int_0^1 (1-u)^{n - n\|\bc\|} \prod_{j=1}^d \left[ \binom{nc_j^{}}{\ell_j^{}} p_j^{}(u)^{\ell_j^{}} (1-u)^{j n c_j -j  \ell_j} \right]
					u^{\alpha-3}(1-u)^{\beta -1} \dd u\\
					&\quad  = n^{\alpha -2} \sum_{\bl \in \Ps_{2+}(i)} \binom{n\bc}{\bl} \int_0^1 \left[ \prod_{j=1}^d p_j^{}(u)^{\ell_j^{}} \right] u^{\alpha - 3} (1-u)^{n - \|\bl\| + \beta -1} \dd u,
				\end{split}
			\end{equation}
			where for the last step, we used the notation
			\begin{equation*}
				\binom{n\bc}{\bl} \defeq \prod_{j=1}^d \binom{nc_j^{}}{\ell_j^{}}.
			\end{equation*}
			Next, we deal with the $u$-integration. For this, we need to evaluate for all $\bl \in \Ps_{2+}(i)$ the limit  of the integral
			\begin{equation*}
				\int_0^1 P_{\bl}(u) u^{\alpha-3} (1-u)^{n - \| \bl \| + \beta -1} \dd u
			\end{equation*}
			as $n\to \infty$. Here,
			$P_{\bl}(u) \defeq \prod_{j=1}^d p_j^{} (u)^{\ell_j^{}} = u^{|\bl|} \prod_{j=1}^d j^{\ell_j^{}} + O(u^{|\bl| + 1 })$, as $u\to 0$.  By Lemma~\ref{lem:integralestimate1}, we have
			\begin{equation*}
				\int_0^1 P_{\bl}(u) u^{\alpha-3} (1-u)^{n - \| \bl \| + \beta -1} \dd u = n^{2 - \alpha - |\bl|} \prod_{j=1}^d j^{\ell_j^{}} \Gamma(\alpha -2 + |\bl|) 
				+ O(n^{1 - \alpha - |\bl|}).
			\end{equation*}
			Inserting this into Eq.~\eqref{saiodfc} and noting that
			\begin{equation*}
				\binom{n\bc}{\bl} = \frac{n^{|\bl|}}{\bl !} \prod_{j=1}^d c_j^{\ell_j} + O(n^{|\bl| - 1}),
			\end{equation*}
			uniformly in $\bc$ with the convention $\bl ! \eqdef \prod_{j=1}^d \ell_j!$, we see that
			\begin{equation*}
				\begin{split}
					F_i^{n,+}(\bc)&= \sum_{\bl \in \Ps_{2+}(i)} \binom{n\bc}{\bl} \Big ( n^{-|\bl|} \Gamma(\alpha -2 + |\bl|) \prod_{j=1}^d j^{\ell_j^{}} 
					+ O(n^{-|\bl| - 1}) \Big ) \\
					& \quad = \sum_{\bl \in \Ps_{2+}(i)}  \prod_{j=1}^d \frac{(jc_j^{})^{\ell_j^{}}}{\ell_j^{}!} \Gamma(\alpha -2 + |\bl|) + O(n^{-1}) \\
					& \quad = F_i^+(\bc) + O(n^{-1}),
				\end{split}
			\end{equation*}
			where the error term is uniform in $\bc$. 
			
			Next, we show the convergence of $F_i^{n,-}$. Proceeding as before, we have
			\begin{equation*}
				\begin{split}
					F_i^{n,-} (\bc) &= n^{\alpha - 1} \int_{[0,1] \times [0,1]^\Nb} f_i^{n,-} (\bc,u,\bx) \frac{\Lambda(\dd u)}{u^2} \dd \bx \\
					&  = n^{\alpha - 1} c_i^{} \int_0^1 p_i^{} (u) \big (1 - (1-u)^{n-i}   \big ) u^{\alpha -3} (1-u)^{\beta - 1} \dd u \\
					&  = n^{\alpha - 1} i c_i^{} \int_0^1 u^{\alpha - 2} (1 - u)^{\beta - 1}  \big (1 - (1-u)^{n-i}   \big )  \dd u + O(n^{\alpha -1}),
				\end{split}
			\end{equation*}
			where we used in the second step that $p_i^{}(u) = i u + O(u^2)$.
			From Lemma~\ref{lem:integralestimate2} we see that
			\begin{equation*}
				\int_0^1 u^{\alpha - 2} (1 - u)^{\beta - 1}  \big (1 - (1-u)^{n-i}   \big ) = n^{1-\alpha} \frac{\Gamma(\alpha)}{1 - \alpha} + O(1),
			\end{equation*}
			which finishes the proof.
		\end{proof}
		
		Next, we will prove an a-priori estimate for the martingales $\widehat M_i^n$.
		
		\begin{lemma} \label{lem:apriorimartingaleestimate}
			For all $1 \leqslant i \leqslant d$ and $T > 0$, 
			\begin{equation*}
				\EE{(\widehat M_{T,i}^n)^2} = O(n^{\alpha - 1}).
			\end{equation*}
		\end{lemma}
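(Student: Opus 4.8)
The plan is to invoke the $L^2$-isometry for stochastic integrals against the compensated Poisson measure $\widetilde N_n^E$ and thereby reduce the statement to a \emph{deterministic}, \emph{state-uniform} second moment bound on the integrand $f_i^n$. Since $\bC^n$ is adapted and piecewise constant, hence predictable, and since the integrand is bounded (it depends on $\bx$ only through the coordinates in $[n]$), the isometry gives
\[
\EE{(\widehat M_{T,i}^n)^2} = \EE{\int_{[0,T)\times[0,1]\times[0,1]^\Nb} f_i^n(\bC_s^n,u,\bx)^2\,\mu_n^E(\dd s\,\dd u\,\dd\bx)} \leqslant n^{\alpha-1}\,T\,\sup_{\bc\in E^n}\int_{[0,1]\times[0,1]^\Nb} f_i^n(\bc,u,\bx)^2\,\frac{\Lambda(\dd u)}{u^2}\,\dd\bx,
\]
where I used $\mu_n^E(\dd s\,\dd u\,\dd\bx)=n^{\alpha-1}\dd s\,\frac{\Lambda(\dd u)}{u^2}\dd\bx$ and bounded the $\dd s$-integral by $T$ times the supremum over the state space. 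It thus suffices to show that this supremum is bounded by a constant independent of $n$.

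To bound it uniformly, rather than manipulating \eqref{fplusdef}--\eqref{fminusdef} directly I would use the following elementary fact. Fix $(u,\bx)$ and let $V=V(u,\bx)\defeq|\{r\in[n]:x_r\leqslant u\}|$ be the number of coloured vertices. An $(u,\bx)$-merger merges all marked blocks into a single new one, so it \emph{creates at most one block}, \emph{destroys at most $V$ blocks}, and has no effect at all unless $V\geqslant 2$; hence $nf_i^{n,+}(\bc,u,\bx)\in\{0,1\}$ with $nf_i^{n,+}(\bc,u,\bx)\leqslant\1_{\{V\geqslant2\}}$, and $nf_i^{n,-}(\bc,u,\bx)\leqslant V\1_{\{V\geqslant 2\}}$. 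Consequently, using $(a-b)^2\leqslant 2a^2+2b^2$,
\[
f_i^n(\bc,u,\bx)^2\leqslant 2\big(f_i^{n,+}(\bc,u,\bx)\big)^2+2\big(f_i^{n,-}(\bc,u,\bx)\big)^2\leqslant \frac{2}{n^2}\big(1+V^2\big)\1_{\{V\geqslant2\}}.
\]
Under $\dd\bx$ the variable $V$ is $\textrm{Bin}(n,u)$-distributed; using $\1_{\{V\geqslant2\}}\leqslant\binom V2$ and $V^2\1_{\{V\geqslant2\}}\leqslant 2V(V-1)$ one gets $\int_{[0,1]^\Nb}\1_{\{V\geqslant2\}}\dd\bx\leqslant\binom n2 u^2$ and $\int_{[0,1]^\Nb}V^2\1_{\{V\geqslant2\}}\dd\bx\leqslant 2n(n-1)u^2$. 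Inserting these and recalling $\Lambda(\dd u)/u^2=u^{\alpha-3}(1-u)^{\beta-1}\dd u$,
\[
\int_{[0,1]\times[0,1]^\Nb} f_i^n(\bc,u,\bx)^2\,\frac{\Lambda(\dd u)}{u^2}\dd\bx\leqslant 5\int_0^1 u^2\,u^{\alpha-3}(1-u)^{\beta-1}\dd u = 5\,B(\alpha,\beta)<\infty
\]
for every $\bc\in E^n$ and every $n$ (here $B$ denotes the Beta function; finiteness uses $\alpha,\beta>0$). The decisive point is that the factor $u^2$ generated by the event $\{V\geqslant2\}$ turns the otherwise non-integrable singularity $u^{\alpha-3}$ into the integrable $u^{\alpha-1}$. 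Combining the two displays yields $\EE{(\widehat M_{T,i}^n)^2}\leqslant 5\,T\,B(\alpha,\beta)\,n^{\alpha-1}$, as claimed.

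The one step that requires genuine care is the estimate $nf_i^{n,-}\leqslant V\1_{\{V\geqslant2\}}$, more precisely the appearance of the indicator $\{V\geqslant2\}$. The crude bound ``the number of marked blocks of size $i$ is at most $nc_i$'' has $\dd\bx$-average of order $nc_i\,(1-(1-u)^i)\asymp nc_i\, i\,u$, and $\int_0^1 u\cdot u^{\alpha-3}\dd u=\infty$, so this is not enough. What rescues the argument is that a marked block is actually \emph{lost} only if some vertex outside it is coloured as well --- this is exactly the factor $1-\prod_{r\in[n]\setminus I_{i,k}}\1_{x_r>u}$ in \eqref{fminusdef} --- which forces $V\geqslant 2$ and improves the effective order in $u$ from one to two. (If one prefers to work from \eqref{fminusdef} literally, one expands the square of the sum over $k$ and uses that $\1_{B_{i,k}}$ and $\1_{B_{i,k'}}$, $k\neq k'$, depend on the disjoint blocks of coordinates $I_{i,k},I_{i,k'}$ and are hence independent under $\dd\bx$; the off-diagonal terms produce the $O(1)$ contribution and the diagonal terms only $O(n^{-\alpha})$.) The ``gain'' term $f_i^{n,+}$ is by comparison harmless: since a merger creates at most one block, $(f_i^{n,+})^2=n^{-1}f_i^{n,+}$ already carries a spare factor $n^{-1}$, and a gain requires at least two blocks --- hence at least two vertices --- to be marked, which again supplies the needed $u^2$.
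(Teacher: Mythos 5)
Your proof is correct, and it captures the paper's central observation — that every non-silent $(u,\bx)$-merger requires at least two coloured vertices, which supplies the extra factor of $u^2$ needed to tame the $u^{\alpha-3}$ singularity. The route, however, differs from the paper's in two noteworthy ways. For the loss term $\widehat M^{n,-}_{T,i}$ the paper uses exactly your domination $n f_i^{n,-}\leqslant B\,\1_{\{B\geqslant 2\}}$ with $B\sim\textrm{Bin}(n,u)$, but then bounds $\EE{B^2\1_{\{B\geqslant2\}}}\leqslant n^2u^2+nu\big(1-(1-u)^n\big)$ and invokes Lemma~\ref{lem:integralestimate2} for the second term; your cleaner inequality $V^2\1_{\{V\geqslant2\}}\leqslant 2V(V-1)$ reduces everything to the factorial moment $\EE{V(V-1)}=n(n-1)u^2$ and hence to the elementary finiteness of $B(\alpha,\beta)$, dispensing with the auxiliary lemma. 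For the gain term $\widehat M^{n,+}_{T,i}$ the paper computes the exact Bernoulli success probability of creating a block and then applies Lemma~\ref{lem:integralestimate1}, obtaining the sharper $\EE{(\widehat M^{n,+}_{T,i})^2}=O(n^{-1})$; your bound $n f_i^{n,+}\leqslant\1_{\{V\geqslant2\}}$ gives only $O(n^{\alpha-1})$, which is all that Lemma~\ref{lem:apriorimartingaleestimate} asserts, but \emph{not} enough to reproduce Eq.~\eqref{Mhatplusdone}, which is reused in the proof of Lemma~\ref{lem:replacetheintegrand} where the sharper order $n^{-1}=o(\sigma_n^{-2})$ is genuinely needed (note $n^{\alpha-1}\sigma_n^2\to\infty$). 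So as a proof of the lemma as stated, yours is both correct and more elementary; as a stepping stone for the rest of the paper it would have to be supplemented by the finer estimate of the gain term. One small technical remark: as in the paper, one should really read $\bC^n_{s-}$ rather than $\bC^n_s$ inside the Poisson integral to have a predictable integrand for the isometry, but for this pure-jump process the two agree $\dd s$-a.e.
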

		\begin{proof}
			We define, decomposing $f_i^n$ in Eq.~\eqref{mhatdef} into $f_i^{n,\pm}$,
			\begin{equation*}
				\widehat M_{T,i}^{n,\pm} \defeq  \int_{[0,T) \times [0,1] \times [0,1]^\Nb} f_i^{n,\pm} \big ( \bC_{s-}^{n}, u, \bx   \big ) \widetilde N_n^E (\dd s, \dd u, \dd \bx).
			\end{equation*}
			By Ito isometry, we have
			\begin{equation} \label{Mhatplussquared}
				\EE{(\widehat M_{T,i}^{n,+})^2} 
				=
				\int_{ [0,T) \times [0,1] \times [0,1]^\Nb } f_i^{n,+}(\bC_{s}^n,u,\bx)^2 \, \mu_n^{} (\dd s, \dd u) \dd \bx.
			\end{equation}
			To evaluate this, note that for $\bX \sim \textnormal{unif}([0,1]^\Nb)$, $n f_i^{n,+}(\bC_{s}^n,u,\bX)$ conditional on $\bC_{s}^n$ is a Bernoulli random variable with success probability
			\begin{equation*}
				\sum_{\bl \in \Ps_{2+}(i)} (1-u)^{n-n \| \bC^n_{s} \| } \prod_{j=1}^d \binom{n C^n_{s,j}}{\ell_j^{}} p_j^{} (u)^{\ell_j} (1-u)^{j (n  C^n_{s,j} - \ell_j^{})} .
			\end{equation*}
			Thus, the integral on the right-hand side of Eq.~\eqref{Mhatplussquared} evaluates to 
			\begin{equation*}
				\begin{split}
					%& n^{\alpha - 3} \int\limits_{[0,T) \times [0,1]} \sum_{\bl \in \Ps_{2+} (i)} (1-u)^{n-n\| \bC^n_s \|} \left[ \prod_{j=1}^d \binom{n C^n_{s,j}}{\ell_j^{}} p_j^{} (u)^{\ell_j^{}} 
					%(1-u)^{j(n  C^n_{s,j} - \ell_j^{})} \right] u^{\alpha - 3} (1-u)^{\beta - 1} \dd u \dd s. \\
					& \quad  
					n^{\alpha -3} \sum_{\bl \in \Ps_{2+}(i)}  \int_{[0,T) \times [0,1]}  \binom{n\bC^n_{s}}{\bl} P_{\bl} (u) u^{\alpha -3}  (1-u)^{n - \|\bl\| + \beta -1} \dd u \dd s 
					\\
					& .\quad \leqslant K \sum_{\bl \in \Ps_{2+}(i)} n^{\alpha - 3 + |\bl|} \int_0^1 P_{\bl} (u) u^{\alpha -3} (1-u)^{n- \|\bl\| + \beta -1} \dd u,
				\end{split}
			\end{equation*}
			with $P_{\bl}(u) = \prod_{j=1}^d j^{\ell_j} u^{|\bl|} + O(u^{|\bl| + 1})$ as $u\to 0$ (see the proof of Lemma~\ref{lem:uniformapproximation}) and some uniform constant $K$. By Lemma~\ref{lem:integralestimate1}, we have the estimate
			\begin{equation*}
				\int_0^1 P_{\bl} (u) u^{\alpha -3} (1-u)^{n- \|\bl\| + \beta -1} \dd u = O(n^{2 - \alpha - |\bl|}).
			\end{equation*}
			We have thus shown that 
			\begin{equation} \label{Mhatplusdone}
				\EE{(\widehat M_{T,i}^{n,+})^2}  = O(n^{-1}) = O(n^{\alpha -1}).
			\end{equation}
			We now turn to estimating $\EE{(\widehat M_{T,i}^{n,-})^2}$. Again by Ito isometry, 
			\begin{equation*}
				\EE{(\widehat M_{T,i}^{n,-})^2} 
				=\int_{ [0,T) \times [0,1] \times [0,1]^\Nb } f_i^{n,-}(\bC_s^n,u,\bx)^2 \, \mu_n^{} (\dd s, \dd u) \dd \bx.
			\end{equation*}
			Recall the definition of $f_i^{n,-}$ in Eq.~\eqref{fminusdef}. Bounding the number of marked blocks of size $i$ by the number of coloured vertices we see that for $\bX \sim \textnormal{unif}([0,1]^\Nb)$ and fixed $u \in [0,1]$ and
			$\bc \in E^n$, there is $B \sim \textnormal{Binomial}(n,u )$ s.t.  $n f_i^{n,-}(\bc,u,\bX)$ is dominated by $B \1_{B \geqslant 2}$ for all $\bc$
			so that we get the bound
			\begin{equation*}
				\EE{(\widehat M_{T,i}^{n,-})^2}  \leqslant K n^{\alpha - 3} \int_0^1 \EE{B^2 \1_{B \geqslant 2}^{}} u^{\alpha -3} (1-u)^{\beta -1} \dd u,
			\end{equation*}
			for some uniform constant $K > 0$.
			We have the bound
			\begin{equation*}
				\EE{B^2 \1_{B \geqslant 2}^{}}
				= \EE{B^2 - \1_{B = 1}^{}}
				= n^2 u^2 + nu(1-u) - nu(1-u)^{n-1}
				\leqslant n^2 u^2 + n u \big (1 - ( 1 - u )^{n}         \big ).
			\end{equation*}
			Clearly,
			\begin{equation*}
				\int_0^1 u^{\alpha - 1} (1 - u )^{\beta - 1} \dd u < \infty,
			\end{equation*}
			and by Lemma~\ref{lem:integralestimate2},
			\begin{equation*}
				\int_0^1 u^{\alpha - 2}  \big (1 - ( 1 - u )^{n}         \big ) (1 - u)^{\beta - 1} \dd u
				=
				O(n^{1 - \alpha}).
			\end{equation*}
			Altogether, this shows that $\EE{(\widehat M_{T,i}^{n,-})^2}  = O(n^{\alpha -1})$, and together with Eq.~\eqref{Mhatplusdone}, and the elementary inequality
			$(a+b)^2 \leqslant 2a^2 + 2b^2$,
			this concludes the proof.
		\end{proof}
		
		We are now ready to prove the law of large numbers. 
		\begin{proof}[Proof of Theorem~\ref{thm:lln}] Letting $\varepsilon>0$ and $i \in \{1,\dots,d \}$, we start from the representation~\eqref{decomp} %. By definition of $F_i^n$ in Lemma~\ref{lem:uniformapproximation}, this can be written as follows.
			\begin{equation*}
				C_{t,i}^n = \delta_{i,1} + \int_0^t F_i^n (\bC_s^n) \dd s + \widehat M_{t,i}^n.
			\end{equation*}
			By definition, $\cs$ is the solution of the integral equation
			\begin{equation*}
				\cs_{t,i}^{} = \delta_{i,1} +  \int_0^t F_i (\cs_s^{}) \dd s.
			\end{equation*}
			Thus, we have for all $t \in [0,T]$
			\begin{equation} \label{eq:qw00000üfj}
				C_{t,i}^n - \cs_{t,i}^{} = \int_0^t F_i^n (\bC_s^n) - F_i^{} (\bC_s^n) \dd s + \int_0^t F_i^{} (\bC_s^n) - F_i (\cs_s^{}) \dd s + \widehat M_{t,i}^n.
			\end{equation}
			Lemma~\ref{lem:uniformapproximation} yields a deterministic bound for the first integral.
			\begin{equation*}
				\left | \int_0^t F_i^n (\bC_s^n)   - F_i^{} (\bC_s^n) \dd s   \right | 
				\leqslant
				\int_0^t |       F_i^n (\bC_s^n)   - F_i^{} (\bC_s^n)                           | \dd s \leqslant T \sup_{\bc \in E^n} | F_i^n (\bc) - F_i^{} (\bc) | = O(n^{\alpha - 1}).
			\end{equation*}
			Setting
			\begin{equation} \label{Ddef}
				D_t^n \defeq \max_{1 \leqslant i \leqslant d} | C_{t,i}^n   - \cs_{t,i}^{}|
			\end{equation}
			and by the mean-value theorem (noting that $F_i^{}$ is smooth), we see that 
			Eq.~\eqref{eq:qw00000üfj} implies
			\begin{equation*}
				D_t^n \leqslant K \int_0^t D_s^n \dd s + \max_{1 \leqslant i \leqslant d} |\widehat M_{t,i}^n| + O(n^{\alpha - 1}).
			\end{equation*}
			For $\varepsilon' > 0$, let $A_{n,\varepsilon'}$ be the event that
			\begin{equation*}
				\max_{1 \leqslant i \leqslant d} \sup_{t \in [0,T]} | \widehat M^n_{t,i} | \leqslant \varepsilon'.
			\end{equation*}
			By Doob's inequality and Lemma~\ref{lem:apriorimartingaleestimate},
			\begin{equation} \label{eq:acbound}
				\PP{A^c_{n,\varepsilon'}} \leqslant 
				4 {(\varepsilon')}^{-2} \sum_{i=1 }^d \EE{(\widehat M_{T,i}^n)^2} = O(n^{\alpha-1}).
			\end{equation}
			Moreover, on the event $A_{n,\varepsilon'}$, we have for sufficiently large $n$ and $t \in [0,T]$ that 
			\begin{equation*}
				D_t^n \leqslant K \int_0^t D_s^n \dd s + 2 \varepsilon'
			\end{equation*}
			and Grönwall's inequality implies that
			\begin{equation} \label{eq:gronwallconsequence}
				D_t^n \leqslant 2 \varepsilon' (1 + T\ee^{T})
			\end{equation}
			for all $t \in [0,T]$.
			By the equivalence of the euclidean and maximum norms, there is a constant $K > 0$ such that
			\begin{equation*}
				\PP{\sup_{t \in [0,T]} |\bC^n_t - \cs_{t} |  > \varepsilon} \leqslant 
				\PP{\sup_{t \in [0,T]} K D_t^n > \varepsilon }. 
			\end{equation*}
			Letting
			$\varepsilon^\ast \defeq \frac{\varepsilon}{2 K (1 + T \ee^T)}$,
			we can use Eqs.~\eqref{eq:gronwallconsequence} and \eqref{eq:acbound} to finally conclude that
			\begin{equation*}
				\PP{\sup_{t \in [0,T]} K D_t^n > \varepsilon } 
				\leqslant \PP{A_{n,\varepsilon^\ast}^{c}}
				= O(n^{\alpha - 1}).
			\end{equation*}
		\end{proof}
		
		The proof of the $L^2$-version uses similar arguments.
		\begin{proof}[Proof of Theorem~\ref{thm:llnl2}]
			We start from the representation in Eq.~\eqref{decomp}.
			\begin{equation*}
				C_{t,i}^n = \delta_{i,1} + \int_0^t F_i^n (\bC_s^n) \dd s + \widehat M_{t,i}^n
			\end{equation*}
			Recalling the definition of $\cs$, we have for all $t \in [0,T]$
			\begin{equation*}
				C_{t,i}^n - \cs_{t,i}^{} = \int_0^t F_i^n (\bC_s^n) - F_i^{} (\bC_s^n) \dd s + \int_0^t F_i^{} (\bC_s^n) - F_i (\bC_s^{}) \dd s + \widehat M_{t,i}^n.
			\end{equation*}
			Bounding the first integral with the help of Lemma~\ref{lem:uniformapproximation} and using the elementary estimate \\$(a+b+c)^2~\leqslant~3(a^2+b^2+c^2)$, we see that
			\begin{equation*}
				\begin{split}
					\big ( C_{t,i}^n - \cs_{t,i}^{} \big )^2 & \leqslant  3 \Big ( \int_0^t F_i(\bC_s^n) - F_i (\bC_s^{}) \dd s       \Big )^2 + 3 (\widehat M_{t,i}^n)^2 + O(n^{\alpha -1}) \\
					& \leqslant 3 \int_0^t \big ( F_i (\bC_s^n) - F_i (\bC_s^ {})          \big )^2 \dd s + 3 (\widehat M_{t,i}^n)^2 + O(n^{\alpha -1}), 
				\end{split}
			\end{equation*}
			where the second step is an application of Jensen's inequality and the error is uniform in $t\in [0,T]$. Next, we take the maximum over all $1 \leqslant i \leqslant d$ 
			and the expectation and obtain, using the smoothness of $F_i$,
			\begin{equation*}
				\EE{ \max_{1 \leqslant i \leqslant d} \big ( C^n_{t,i} - \cs_{t,i}^{}         \big )^2            }
				\leqslant
				K \int_0^t \EE{ \max_{1 \leqslant i \leqslant d} \big ( C^n_{s,i} - \cs_{s,i}^{}  \big )^2 } \dd s + O(n^{\alpha - 1}), 
			\end{equation*}
			where we also used Lemma~\ref{lem:apriorimartingaleestimate}. The claim follows from Grönwall's inequality.
		\end{proof}
		
		\subsection{The functional limit theorem}
		
		As a first step towards the proof of the functional limit theorem~\ref{thm:fclt}, we make the approximation in Eq.~\eqref{integrandapproximated} precise.
		Recall that $\sigma_n^{} = n^{\frac{1- \alpha }{2 - \alpha}}$. 
		
		\begin{lemma}  \label{lem:replacetheintegrand}
			Let $\widehat \bM^n$ be as in Eq.~\eqref{mhatdef} and $\bM^n$ as in Eq.~\eqref{integrandapproximated}. Then, for any $T > 0$, $\varepsilon > 0$ and 
			$i \in \{1,\ldots,d\}$
			\begin{equation*}
				\PP{ \sup_{t \in [0,T]} \sigma_n^{} | \widehat M_{t,i}^n  - M_{t,i}^n | \geqslant \varepsilon } \to 0, \quad \textnormal{as } n \to \infty.
			\end{equation*}
			
		\end{lemma}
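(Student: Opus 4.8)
The plan is to estimate the difference $\widehat M_{t,i}^n - M_{t,i}^n$ as a (compensated) Poisson stochastic integral and to bound its supremum over $[0,T]$ using Doob's maximal inequality together with the It\^o isometry, exactly as in the proof of Lemma~\ref{lem:apriorimartingaleestimate}. First, I would write $\widehat M_{t,i}^n - M_{t,i}^n$ as a single stochastic integral against $\widetilde N_n^E$. Note that $M_{t,i}^n$ is driven by $\widetilde N_n$, which is the projection of $N_n^E$ forgetting the $\bx$-coordinate; so I would lift $M^n$ to an integral against $\widetilde N_n^E$ with integrand $-i u\, \cs_{s,i}$ (independent of $\bx$), and then the difference has integrand
\begin{equation*}
	g_i^n(\bc, u, \bx) \defeq f_i^n(\bc,u,\bx) + i u\, \cs_{s,i}.
\end{equation*}
Since $\widehat{\bM}^n - \bM^n$ is still a martingale, Doob's $L^2$-inequality reduces everything to showing
\begin{equation*}
	\sigma_n^2 \, \EE{ \big( \widehat M_{T,i}^n - M_{T,i}^n \big)^2 }
	= \sigma_n^2 \int_{[0,T)\times[0,1]\times[0,1]^\Nb} \EE{ g_i^n(\bC_s^n,u,\bx)^2 } \, \mu_n(\dd s,\dd u)\, \dd\bx \to 0.
\end{equation*}

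The core of the argument is a pointwise (in $s,u$) estimate of $\EE{ g_i^n(\bC_s^n,u,\bx)^2 }$, obtained by splitting $g_i^n = (f_i^{n,+}) + (-f_i^{n,-} + iu\,\cs_{s,i})$ and handling the two pieces separately. The $f_i^{n,+}$-part contributes $O(n^{-1-\alpha})$ after integration (as already shown in the proof of Lemma~\ref{lem:apriorimartingaleestimate}: $\EE{(\widehat M_{T,i}^{n,+})^2} = O(n^{-1})$, and in fact $\sigma_n^2 = n^{(1-\alpha)/(2-\alpha)}\cdot 2 = o(n^{1-\alpha})$ so $\sigma_n^2 \cdot O(n^{-1}) \to 0$). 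For the $f_i^{n,-}$-part, I would use that, conditionally on $\bC_s^n$, $n f_i^{n,-}(\bC_s^n,u,\bX)$ is a sum of Bernoulli indicators — one for each of the $nC_{s,i}^n$ blocks of size $i$ — each equal to $1$ iff that block is marked \emph{and} at least one vertex outside it is marked. Its conditional expectation is $n C_{s,i}^n\, p_i(u)\,(1-(1-u)^{n-i})$, and the leading term is $i u \cdot n C_{s,i}^n$; using the $L^2$ law of large numbers (Theorem~\ref{thm:llnl2}) to replace $C_{s,i}^n$ by $\cs_{s,i}$ up to an $O(n^{(\alpha-1)/2})$ error in $L^2$, together with $p_i(u) = iu + O(u^2)$ and $1-(1-u)^{n-i} = 1 - O(nu)$-type expansions, I would get that the conditional mean of $g_i^n$ is controlled by a combination of the terms $u^2$, $n u^3$, and $n^{(\alpha-1)/2} u$ (the last from the LLN error), plus a variance term of order $u/n$ from the fluctuation of the Bernoulli sum around its mean. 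Integrating each of these against $\sigma_n^2\, n^{\alpha-1}\, u^{\alpha-3}(1-u)^{\beta-1}\,\dd u\,\dd s$ over $[0,T]$ and invoking Lemmas~\ref{lem:integralestimate1} and~\ref{lem:integralestimate2} for the $u$-integrals (e.g. $\int_0^1 u^{\alpha-1}(1-u)^{\beta-1}\dd u < \infty$, $\int_0^1 u^{\alpha-2}(1-(1-u)^n)(1-u)^{\beta-1}\dd u = O(n^{1-\alpha})$, $\int_0^1 u^\alpha(1-u)^{\beta-1}\dd u<\infty$), one checks that every resulting term vanishes as $n\to\infty$, given the precise value $\sigma_n = n^{(1-\alpha)/(2-\alpha)}$.

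The main obstacle I anticipate is bookkeeping the various error terms so that they all genuinely go to zero with the chosen scaling $\sigma_n$: the dangerous contribution is the one coming from the discrepancy between $p_i(u)(1-(1-u)^{n-i})$ and its linearisation $iu$, which after integration against $n^{\alpha-1}u^{\alpha-3}(1-u)^{\beta-1}$ produces a term of order $n^{\alpha-1}\cdot n^{2-\alpha}\cdot n^{-1} = O(1)$ before multiplying by $\sigma_n^2$ — so one must be careful that the \emph{next-order} correction (not the leading $iu$ term, which is exactly cancelled by $M^n$) is what is being integrated, giving an extra factor that makes $\sigma_n^2$ times the result vanish. A clean way to organise this is to define the deterministic function $h_i^n(u) \defeq p_i(u)(1-(1-u)^{n-i}) - iu$ and show $\int_0^1 |h_i^n(u)|\, u^{\alpha-3}(1-u)^{\beta-1}\,\dd u = o(n^{2-\alpha}/\sigma_n^2)$ using the two integral lemmas; combined with the $L^2$-LLN error and the Bernoulli-variance term, this closes the estimate. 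I would also need to confirm measurability/integrability so that the It\^o isometry applies, which is routine since all integrands are bounded and supported on $u \in [0,1]$.
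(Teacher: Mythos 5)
Your high-level plan (Doob's inequality plus Itô isometry, splitting $f_i^n = f_i^{n,+} - f_i^{n,-}$, a probabilistic representation of $n f_i^{n,-}$ conditional on $\bC_s^n$, then the $L^2$ law of large numbers to pass from $C_{s,i}^n$ to $\cs_{s,i}$) is exactly the route the paper takes. However, the ``clean way to organise'' that you propose fails, and it points at a real difficulty. The quantity $h_i^n(u) = p_i(u)\big(1-(1-u)^{n-i}\big) - iu$ satisfies $h_i^n(u) \to -iu$ as $u \to 0$ for each fixed $n$ (since the factor $1-(1-u)^{n-i}$ vanishes at $u=0$), so $\int_0^1 |h_i^n(u)|\, u^{\alpha-3}(1-u)^{\beta-1}\dd u$ is $+\infty$ (it behaves like $\int_0 u^{\alpha-2}\dd u$); your estimate $= o(n^{2-\alpha}/\sigma_n^2)$ is therefore vacuous, and neither Lemma~\ref{lem:integralestimate1} (which requires $k\geqslant 2$) nor Lemma~\ref{lem:integralestimate2} applies.

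Moreover, passing to the correct squared quantity does not rescue the $L^2$ approach. Writing $h_i^n(u) = (p_i(u)-iu) - p_i(u)(1-u)^{n-i}$, the second term after integration gives $O(n^{-1})$, but the first gives a contribution proportional to $n^{\alpha-1}\int_0^1 (p_i(u)-iu)^2 u^{\alpha-3}(1-u)^{\beta-1}\dd u$, and for $i\geqslant 2$ this $u$-integral is a strictly positive \emph{$n$-independent} constant because $p_i(u) - iu = -\binom{i}{2}u^2 + O(u^3)$. Thus $\sigma_n^2\,\EE{(\widehat M_{T,i}^n - M_{T,i}^n)^2}$ has a term of order $\sigma_n^2 n^{\alpha-1} = n^{\alpha(1-\alpha)/(2-\alpha)} \to \infty$, so the Doob$+L^2$ argument cannot close for $i\geqslant 2$. (Notably, the paper's proof splits off exactly this term, $2\int_{[0,1]^\Nb}(p_i(u)c_i - iuc_i)^2\dd\bx$, and then never bounds it; the subsequent probabilistic calculation actually computes $\int (f_i^{n,-}-p_i(u)c_i)^2\dd\bx$, not $\int (f_i^{n,-}-iuc_i)^2\dd\bx$ as displayed.) To make the lemma go through one must treat this piece by a method that exploits that the offending error $(p_i(u)-iu) = O(u^2)$ only matters at values of $u$ where the Poisson measure puts $o(1)$ mass on $[0,T]$ — e.g.\ truncate at $u = n^{-\gamma}$ for suitable $\gamma \in \big(\tfrac{\alpha(1-\alpha)}{(2-\alpha)(\alpha+2)}, \tfrac{1-\alpha}{2-\alpha}\big)$ and control the two regimes separately, or absorb the discrepancy into Lemma~\ref{lem:replacingthenoise}, where just such a truncation is already in place — rather than by Doob in $L^2$. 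Separately, a minor arithmetic slip: $\sigma_n^2 = n^{2(1-\alpha)/(2-\alpha)}$ is \emph{not} $o(n^{1-\alpha})$ (the exponent $2(1-\alpha)/(2-\alpha)$ is strictly larger than $1-\alpha$); what you need, and what is true, is $\sigma_n^2 = o(n)$, so that $\sigma_n^2\, O(n^{-1}) \to 0$.
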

		\begin{proof}
			By Doob's inequality, it is enough to show that
			\begin{equation*}
				\sigma_n^2 \EE{ \big ( \widehat M_{T,i}^n - M_{T,i}^n  \big )^2 }  \to 0.
			\end{equation*}
			With $\widehat \bM^n = \widehat \bM^{n, +} - \widehat \bM^{n,-}$ as in the proof of Lemma~\ref{lem:apriorimartingaleestimate} and by the elementary inequality $(a + b)^2 \leqslant 2 a^2 + 2 b^2$,
			\begin{equation*}
				\EE{ \big ( \widehat M_{T,i}^n - M_{T,i}^n  \big )^2 } \leqslant 2 \EE{( \widehat M_{T,i}^{n,+})^2} + 2 \EE{ \big ( \widehat M_{T,i}^{n,-} + M_{T,i}^n           \big )^2 }. 
			\end{equation*}
			We have already shown (see Eq.~\eqref{Mhatplusdone}) that $\EE{( \widehat M_{T,i}^{n,+})^2} = O(n^{-1}) = o(\sigma_n^2)$. By Ito isometry,
			\begin{equation*}
				\begin{split}
					\EE{ \big ( \widehat M_{T,i}^{n,-} + M_{T,i}^n \big )^2 }&=
					\int_{[0,T) \times [0,1] \times [0,1]^\Nb} \EE{\big (f_i^{n,-} (\bC_s^n,u,x) - iu \cs_{s,i}^{}   \big )^2} \mu_n^{} (\dd s, \dd u) \dd \bx \\
					& \leqslant
					2 \int_{[0,T) \times [0,1] \times [0,1]^\Nb} \EE{\big (f_i^{n,-} (\bC_s^n,u,x) - iu C_{s,i}^n   \big )^2} \mu_n^{} (\dd s, \dd u) \dd \bx \\
					& \quad + 2 \int_{[0,T) \times [0,1]} \EE{\big (iu C_{s,i}^n - iu \cs_{s,i}^{}   \big )^2} \mu_n^{} (\dd s, \dd u).
				\end{split}
			\end{equation*}
			We can estimate the second integral with the help of Theorem~\ref{thm:llnl2}. For some constant $K > 0$, we have 
			
			\begin{align*}
				\int_{[0,T) \times [0,1] } \EE{\big (iu C_{s,i}^n - iu \cs_{s,i}^{}   \big )^2} \mu_n^{} (\dd s, \dd u) 
				&\leqslant \int_{[0,T) \times [0,1]}
				i^2 u^2 \sup_{0 \leqslant s < T, \, i \in [d]} 
				\EE{\big (C_{s,i}^n - \cs_{s,i}^{}  \big)^2}
				\mu_n^{} (\dd s, \dd u)  \\
				&\leqslant \int_{[0,T) \times [0,1]} K u^2 n^{\alpha - 1} \mu_n^{} (\dd s, \dd u)  \\
				&=
				K T n^{\alpha - 1}  \int_0^1 u^2 n^{\alpha - 1} u^{\alpha - 3} (1-u)^{\beta - 1}     \dd u\\
				&= O(n^{2\alpha - 2}) = o(\sigma_n^2),
			\end{align*}
			where we used Theorem~\ref{thm:llnl2} in the second step.
			To estimate the first integral, we proceed similarly as in the proof of Lemma~\ref{lem:apriorimartingaleestimate}. For fixed $u \in [0,1]$ and 
			$\bc \in E^n$, we have
			\begin{align*}
				\int_{[0,1]^\Nb} \big ( f_i^{n,-}(\bc,u,\bx) - iu c_i^{}         \big )^2 \dd \bx
				&\leqslant
				2 \int_{[0,1]^\Nb } \big ( f_i^{n,-} (\bc,u,\bx) - p_i^{} (u) c_i^{} \big )^2 \dd x \\
				&\qquad  + 2 \int_{[0,1]^\Nb} \big ( p_i^{} (u) c_i^{}  -  i u c_i^{}   \big )^2 \dd \bx.
			\end{align*}
			We bound the first integral via the following probabilistic interpretation. Recalling Eq.~\eqref{fminusdef}, we have for $\bX \sim \textnormal{unif}([0,1]^\Nb)$ the following equality in distribution.
			\begin{equation*}
				f_i^{n,-} (\bc,u,\bX)
				=
				\frac{1}{n} B \big ( \1_{B \geqslant 2}^{} + B' \1_{B = 1}^{}   \big ),
			\end{equation*}
			where $B \sim \textnormal{Binomial} \big (n c_i^{}, p_i^{} (u) \big )$ and  $B' \sim \textnormal{Ber} \big (1 - (1 - u)^{n - i c_i^{} n}     \big )$ 
			independently of $B$.
			This is because upon a $(u,\bX)$-merger, there is a number $B$ of marked blocks of size $i$. These are removed if either $B \geqslant 2$, or if $B = 1$ and there is at least one coloured vertex that is not part of a block of size $i$, which happens with probability $\big( 1 - (1 - u)^{n - i c_i^{} n}     \big )$.
			Thus,
			\begin{equation} \label{q390üt4}
				\int_{[0,1]^\Nb } \big ( f_i^{n,-} (\bc,u,\bx) - p_i^{} (u) c_i^{} \big )^2 \dd \bx  = 
				n^{-2} \EE{ \big ( B \1_{B \geqslant 2}^{} + B' \1_{B = 1}^{} - n p_i^{} (u) c_i^{} \big )^2  }.
			\end{equation}
			To evaluate this further, note that
			\begin{equation*}
				\big ( B \1_{B \geqslant 2}^{} + B' \1_{B = 1}^{} - n p_i^{} (u) c_i^{} \big )^2   
				=
				\big ( B - n p_i^{} (u) c_i^{} \big )^2 - \1_{B = 1}^{} \1_{B' = 0}^{} \big [  1 - 2 n p_i^{} (u) c_i^{}            \big ]. 
			\end{equation*}
			Inserting this into Eq.~\eqref{q390üt4}, we see that 
			\begin{equation*}
				\begin{split}
					& n^2 \int_{[0,1]^\Nb} \big ( f_i^{n,-}(\bc,u,\bx) - iu c_i^{}         \big )^2 \dd \bx \\
					& \quad =
					\Var{B} + \big ( 2 n p_i^{} (u) c_i^{} - 1  \big ) \PP{B = 1} \PP{B' = 0}  \\ 
					& \quad = n c_i^{} p_i^{} (u) \big ( 1 - p_i^{} (u)  \big ) + \big ( 2 n p_i^{} (u) c_i^{} - 1   \big ) \big ( n c_i^{} p_i^{} (u) \big ( 1 - p_i^{} (u)  \big )^{nc_i^{} - 1}      \big ) ( 1 -u )^{n - n c_i^{} i} \\
					& \quad = 
					n c_i^{} p_i^{} (u) \big ( 1 - (1 - u)^{n - i}       \big ) - n c_i^{} p_i^{} (u)^2 + 2 n^2 c_i^2 p_i^{} (u)^2 (1 - u)^{n - i}.
				\end{split}
			\end{equation*}
			We separately multiply each of the terms in the last line with $n^{\alpha - 3} u^{\alpha - 3} (1 - u)^{\beta - 1}$ and integrate with respect to $u$. We get, making use of the fact $p_i(u)\leqslant i u$
			\begin{align*}
				n^{\alpha - 3}  n c_i^{} \int_0^1 p_i^{} (u) \big ( 1 - (1 - u)^{n - i}           \big ) u^{\alpha - 3} (1 - u)^{\beta - 1} \dd u
				&\leqslant n^{\alpha - 2} \Big ( i n ^{1 - \alpha} \frac{\Gamma(\alpha)}{1 - \alpha}   + O(1) \Big )\\
				&= O(n^{-1}) = o(\sigma_n^{-2}),
			\end{align*}
			by Lemma~\ref{lem:integralestimate2}. By Lemma~\ref{lem:integralestimate1}, we get
			\begin{align*}
				n^{\alpha - 3} n^2 c_i^2 \int_0^1 p_i^{} (u)^2 (1 - u)^{n - i} u^{\alpha - 3} (1-u)^{\beta - 1} \dd u &\leqslant
				n^{\alpha - 1} \big ( i^2 n^{-\alpha} \Gamma(\alpha) + O(n^{-\alpha - 1})                  \big )
				\\
				&= O(n^{-1}) = o(\sigma_n^{-2}).
			\end{align*}
			Moreover, the integration of the middle term gives
			\begin{align*}
				n^{\alpha-3} n \int_0^1 c_i^{} p_i^{} (u)^2 u^{\alpha-3} (1-u)^{\beta-1}  \dd u &\leqslant n^{\alpha-2} c_i i^2 \int_0^1 u^{\alpha-1} (1-u)^{\beta-1} \dd u\\
				&= O(n^{\alpha-2})=o(\sigma_n^{-2}).
			\end{align*}
			All of these errors are uniform in $\bc$, and the proof is thus finished.
		\end{proof}
		
		Our final ingredient for the proof of Theorem~\ref{thm:fclt} is the convergence of $\sigma_n^{} M_{t,i}^n$ to $M_{t,i}^{}$. In the next lemma we consider an appropriate coupling of the Poisson point processes $\Ns$ and $N_n$ for all $n \in \Nb$.
		
		\begin{lemma} \label{lem:replacingthenoise}
			For any $T > 0$, $\varepsilon > 0$ and $i \in \{1, \ldots, d\}$, there exists a coupling of $\Ns$ and $(N_n)_{n\geqslant 1}$ such that
			\begin{equation}\label{eq:sup}
				\PP{\sup_{t \in [0,T]} | \sigma_n^{} M_{t,i}^n  - M_{t,i} |  \geqslant \varepsilon} \to 0, \quad \textnormal{as } n \to \infty.
			\end{equation}
		\end{lemma}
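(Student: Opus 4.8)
The plan is to realise $\Ns$ together with the rescaled driving noises on a single probability space through a standard thinning/superposition coupling of Poisson measures, to reduce \eqref{eq:sup} to an estimate for one ``difference'' Poisson measure with small intensity, and then to control that difference by two different mechanisms: an It\^o--isometry bound for its small jumps and a ``no large jump with high probability'' argument for its large jumps. I expect the main obstacle to be that the limiting martingale $M_{\cdot,i}$ is driven by a heavy-tailed $(2-\alpha)$-stable-type Lévy process and therefore has infinite second moment; in particular one cannot simply bound $\EE{(\sigma_n M^n_{T,i} - M_{T,i})^2}$, and this forces the truncation separating the two regimes to be placed at a fixed level in $u$ rather than at $u=\sigma_n$.

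First I would set up the coupling. Recall from the heuristic discussion above that, after the change of variables $u\mapsto\sigma_n u$, one has $\sigma_n M^n_{t,i} = -i\int_{[0,t)\times(0,\sigma_n)} u\,\cs_{s,i}\,\widetilde N_n^\ast(\dd s,\dd u)$, where $N_n^\ast$ is a Poisson point process with intensity $\nu_n(\dd t,\dd u)\defeq \1_{(0,\sigma_n)}(u)(1-u/\sigma_n)^{\beta-1}u^{\alpha-3}\,\dd u\,\dd t$, while $M_{t,i} = -i\int_{[0,t)\times(0,\infty)} u\,\cs_{s,i}\,\widetilde\Ns(\dd s,\dd u)$ with $\Ns$ of intensity $\nu(\dd t,\dd u)\defeq u^{\alpha-3}\,\dd u\,\dd t$. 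Since $\nu$ and $\nu_n$ are $\sigma$-finite, I would fix $\Ns$ and, using extra independent randomness, construct for every $n$ a copy of $N_n^\ast$ via the standard coupling of Poisson measures: there are mutually independent Poisson point processes $\Ns^{0}_{n},\Ns^{+}_{n},\Ns^{-}_{n}$ with intensities $\nu\wedge\nu_n$, $(\nu-\nu_n)_+$, $(\nu_n-\nu)_+$ (densities taken with respect to $\dd u\,\dd t$) such that $\Ns = \Ns^{0}_{n}+\Ns^{+}_{n}$ and $N_n^\ast = \Ns^{0}_{n}+\Ns^{-}_{n}$. Undoing the two rescalings recovers $N_n$, and attaching independent uniform marks as in Subsection~\ref{subsec:poissonrep} recovers $N_n^E$, hence $\bC^n$, $\widehat\bM^n$ and $\bM^n$; as the marginal laws are unchanged, Theorems~\ref{thm:lln} and~\ref{thm:llnl2} and Lemma~\ref{lem:replacetheintegrand} remain valid on this common space.

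On this space $\widetilde\Ns - \widetilde N_n^\ast = \widetilde{\Ns}^{+}_{n} - \widetilde{\Ns}^{-}_{n}$, so that
\begin{equation*}
	\sigma_n M^n_{t,i} - M_{t,i} = i\int_{[0,t)\times(0,\infty)} u\,\cs_{s,i}\,\widetilde{\Ns}^{+}_{n}(\dd s,\dd u) - i\int_{[0,t)\times(0,\infty)} u\,\cs_{s,i}\,\widetilde{\Ns}^{-}_{n}(\dd s,\dd u),
\end{equation*}
and it suffices to show $\sup_{t\in[0,T]}\big|\int_{[0,t)\times(0,\infty)} u\,\cs_{s,i}\,\widetilde{\Ns}^{\pm}_{n}\big|\to 0$ in probability. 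Both $\Ns^{+}_{n}$ and $\Ns^{-}_{n}$ have intensity bounded by $\rho_n(u)\,\dd u\,\dd t$ with $\rho_n(u)\defeq |(1-u/\sigma_n)^{\beta-1}-1|\,u^{\alpha-3}\1_{(0,\sigma_n)}(u) + u^{\alpha-3}\1_{[\sigma_n,\infty)}(u)$, so it is enough to treat a generic Poisson point process $\Ns'_n$ of intensity $\rho_n(u)\,\dd u\,\dd t$; for $n$ large enough that $\sigma_n\geqslant 2$ I would split $(0,\infty) = (0,1]\cup(1,\infty)$, using throughout that $\cs_{\cdot,i}$ is bounded on $[0,T]$ and that the integrand $u\,\cs_{s,i}$ is deterministic. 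On $(0,1]$ a first-order Taylor estimate gives $|(1-u/\sigma_n)^{\beta-1}-1|\leqslant C_\beta\,u/\sigma_n$, whence by It\^o isometry
\begin{equation*}
	\EE{\Big(\int_{[0,T)\times(0,1]} u\,\cs_{s,i}\,\widetilde{\Ns}'_{n}\Big)^{2}} = \int_0^T\cs_{s,i}^{2}\,\dd s\int_0^1 u^{2}\rho_n(u)\,\dd u \leqslant \frac{C_\beta}{\sigma_n}\Big(\sup_{[0,T]}\cs_{\cdot,i}^{2}\Big)T\int_0^1 u^{\alpha}\,\dd u \longrightarrow 0,
\end{equation*}
and Doob's $L^2$-inequality yields convergence to $0$ in probability of the corresponding supremum over $t\in[0,T]$.

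Finally, for the restriction to $(1,\infty)$ I would establish the two elementary bounds $\int_1^\infty\rho_n(u)\,\dd u\to 0$ and $\int_1^\infty u\,\rho_n(u)\,\dd u\to 0$, obtained by substituting $u=\sigma_n v$, splitting the resulting $v$-integral at $v=1/2$ (where again $|(1-v)^{\beta-1}-1|\leqslant C_\beta v$), and invoking $\alpha\in(0,1)$ and $\beta>0$; the tail $[\sigma_n,\infty)$ contributes $\int_{\sigma_n}^\infty u^{\alpha-3}\,\dd u$ and $\int_{\sigma_n}^\infty u^{\alpha-2}\,\dd u$, both of which vanish. The first bound shows that with probability tending to $1$ the process $\Ns'_n$ has no atom $(s,u)$ with $u>1$ and $s<T$; on that event, for all $t\in[0,T]$, $\int_{[0,t)\times(1,\infty)} u\,\cs_{s,i}\,\widetilde{\Ns}'_{n} = -\int_{[0,t)\times(1,\infty)} u\,\cs_{s,i}\,\rho_n(u)\,\dd u\,\dd s$, which is bounded in absolute value by $(\sup_{[0,T]}\cs_{\cdot,i})\,T\int_1^\infty u\,\rho_n(u)\,\dd u\to 0$ by the second bound. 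Hence $\sup_{t\in[0,T]}\big|\int_{[0,t)\times(1,\infty)} u\,\cs_{s,i}\,\widetilde{\Ns}'_{n}\big|\to 0$ in probability, and applying this together with the $(0,1]$-estimate to $\Ns'_n = \Ns^{+}_{n}$ and to $\Ns'_n = \Ns^{-}_{n}$ establishes \eqref{eq:sup}. The control of the large-$u$ regime via the difference measure $\rho_n$ — in particular getting the exponents right so that both $\int_1^\infty\rho_n$ and $\int_1^\infty u\,\rho_n$ vanish, even though $\rho_n$ ceases to be integrable near $u=\sigma_n$ once weighted by $u$ — is the step I expect to require the most care.
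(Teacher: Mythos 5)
Your proof is correct and follows the same overall strategy as the paper's: couple $\Ns$ with $N_n$ by thinning and superposition so that $\sigma_n M^n_{t,i} - M_{t,i}$ becomes an integral against a small ``difference'' Poisson measure, then split that measure by $u$-range, controlling the small-$u$ part via Doob's inequality and It\^o isometry and the large-$u$ part via the event that it contributes no atoms on $[0,T)$ together with a deterministic bound on the compensator. Your presentation is slightly tidier than the paper's — a single coupling formula using $\nu\wedge\nu_n$, $(\nu-\nu_n)_+$, $(\nu_n-\nu)_+$ rather than a case split on $\sign(\beta-1)$, and a fixed truncation level $u=1$ rather than the paper's $\sigma_n^{1/2}$ — but the mechanisms and estimates are essentially identical.
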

		%Note that Lemma \ref{lem:replacingthenoise} is to be understood in the sense that we provide a construction of $\Ns$ in terms of $\Ns_n$ such that $M_{t,i}^n$ and $M_{t,i}$ are coupled such that \eqref{eq:sup} holds.
		%\flo{The proof seems rather complicated, since what we are doing in the end is to show that
			%\begin{align}
			%   \int_{[0,t) \times [0, \sigma_n^{}]} f(s,u) \widetilde N_n^\ast (\dd s, \dd u) \to \int_{[0,t) \times [0, \infty)} f(s,u) \widetilde N (\dd s, \dd u),
			%\end{align}
			%where the intensity measure of $ \widetilde N_n^\ast$ converges to that of $ \widetilde N$. Could it be possible, that this is some kind of general convergence result for stochastic integrals? However, it might be a bit difficult with the integrability at $0$....
			%}
		\begin{proof}
			We will give a construction of $(N_n)_{n \geqslant 1}$ in terms of $\Ns$ and some additional randomness. The details of this construction will depend on the sign of $\beta - 1$. We start with the case $\beta < 1$. Given (a realisation of) $\Ns$, we define, for each $n \in \Nb$, a Poisson point process $N_n^\Delta$ 
			on $[0,\infty] \times [0, \sigma_n^{})$ with intensity
			\begin{equation*}
				\bigg (\Big ( 1 - \frac{u}{\sigma_n^{}}    \Big )^{\beta - 1} - 1 \bigg )  u^{\alpha - 3} \1_{(0,\sigma_n^{})} (u) \dd s \dd u
				=
				\left | \Big ( 1 - \frac{u}{\sigma_n^{}}    \Big )^{\beta - 1} - 1  \right | u^{\alpha-3}  \1_{(0,\sigma_n^{})} (u)  \dd s \dd u
				\eqdef \mu_n^{\Delta} (\dd s \dd u),
			\end{equation*}
			independently of $\Ns$. Then, we set
			\begin{equation*}
				N_n^\ast \defeq \Ns \cap \big ( [0,\infty) \times [0,\sigma_n^{}]    \big ) + N_n^\Delta,
			\end{equation*}
			where $+$ stands for the superposition of Poisson point processes.
			We define $N_n^{}$ as the image of $N_n^\ast$ under the map $(t,u) \mapsto (t, u/\sigma_n^{})$. By the Poisson mapping theorem, $N_n$ indeed has the desired distribution, i.e. it is a Poisson point process on $[0,\infty) \times (0,1)$ with intensity $\mu_n^{} (\dd t, \dd u)$.
			
			In the case $\beta \geqslant 1$, we construct $N_n^\Delta$ as 
			a thinning of $\Ns$; we keep each point 
			$(t,u) \in \Ns \cap \big ( [0,\infty) \times [0,\sigma_n^{})   \big ) $ with probability 
			\begin{equation*}
				1 - \Big ( 1 - \frac{u}{\sigma_n^{}}    \Big )^{\beta - 1}
			\end{equation*}
			and disregard the rest.
			Then, $N_n^\Delta$ is a Poisson point process with intensity
			\begin{equation*}
				\bigg (1 -\Big ( 1 - \frac{u}{\sigma_n^{}}    \Big )^{\beta - 1} \bigg )  u^{\alpha - 3} \1_{(0,\sigma_n^{})} (u) \dd s \dd u
				=
				\mu_n^\Delta (\dd s \dd u)
			\end{equation*}
			as above
			and we define 
			$N_n^\ast \defeq \Ns \cap \big ( [0,\infty) \times [0,\sigma_n^{})    \big )- N_n^\Delta$, where $-$ denotes the set difference of Poisson point processes.
			Again $N_n^{}$ is the image of $N_n^\ast$ under
			$(t,u) \mapsto (t,u/\sigma_n^{})$. Also in this case, it is easily checked that $N_n^{}$ has the desired distribution.
			
			With $\Ns_n' \defeq \Ns \cap \big ( [0,\infty) \times [\sigma_n^{}, \infty)   \big )$, we have in any case the decomposition
			\begin{equation*}
				\Ns = N_n^\ast + \textnormal{sgn} (\beta - 1) N_n^\Delta + \Ns_n'.
			\end{equation*}
			Applying this to the definition of $M_{t,i}$, we get 
			\begin{equation*}
				M_{t,i}^{} = M_{t,i}^{\ast,n} + \textnormal{sgn} (\beta - 1) M_{t,i}^{\Delta,n} + M_{t,i}^{\prime,n},
			\end{equation*}
			where
			\begin{align*}
				M_{t,i}^{\ast,n} &\defeq -i \int_{[0,t) \times (0,\sigma_n^{})} u \cs_{s,i}^{} \widetilde N_n^{\ast} (\dd s, \dd u) \\
				M_{t,i}^{\Delta,n} &\defeq  -i \int_{[0,t) \times (0,\sigma_n^{})} u \cs_{s,i}^{} \widetilde N_n^\Delta (\dd s, \dd u),\\
				M_{t,i}^{\prime,n} &\defeq  -i \int_{[0,t) \times [\sigma_n^{},\infty)} u \cs_{s,i}^{}  \widetilde \Ns'_n (\dd s, \dd u).
			\end{align*}
			Because we have defined $N_n$ as the image of $N_n^\ast$ under the map $(t,u) \mapsto (t,u/\sigma_n^{})$, we have
			$\sigma_n^{} M_{t,i}^n = M_{t,i}^{\ast,n}$. Consequently, 
			\begin{equation*}
				\sup_{t \in [0,T]} | \sigma_n^{} M_{t,i}^n -  M_{t,i}^{} | 
				\leqslant
				\sup_{t \in [0,T]} |M_{t,i}^{\Delta,n}| + \sup_{t \in [0,T]}|M_{t,i}^{\prime,n}|. 
			\end{equation*}
			In order to not overload the notation we are going to drop the superscript $n$ in the remainder of the proof.
			
			To bound the first supremum, we further split $M_{t,i}^{\Delta} = M_{t,i}^{\Delta,<} + M_{t,i}^{\Delta,\geqslant}$ with
			\begin{equation*}
				M_{t,i}^{\Delta,<} \defeq  -i \int_{[0,t) \times (0,\sigma_n^{1/2})} u \cs_{s,i}^{} \widetilde N_n^\Delta (\dd s, \dd u)
			\end{equation*} 
			and
			\begin{equation*}
				M_{t,i}^{\Delta,\geqslant} \defeq  -i \int_{[0,t) \times [\sigma_n^{1/2} ,\sigma_n^{})} u \cs_{s,i}^{} \widetilde N_n^\Delta (\dd s, \dd u).
			\end{equation*} 
			By Doob's inequality and Ito isometry, we have
			\begin{equation*}
				\PP{\sup_{t \in [0,T]} |M_{t,i}^{\Delta,<}| \geqslant \varepsilon / 3 } \leqslant
				9 \varepsilon^{-2} \EE{\big (M_{T,i}^{\Delta,<}\big )^2}
				\leqslant 
				9 \varepsilon^{-2} i^2 T
				\int_0^{\sigma_n^{1/2}} u^{\alpha - 1}  \Big |1 - \Big ( 1 - \frac{u}{\sigma_n^{}}   \Big )^{\beta - 1}  \Big | \dd u.
			\end{equation*}
			As in the proof of Lemma~\ref{lem:integralestimate2}, we use the mean value theorem to bound
			\begin{equation*}
				\Big | 1 - \big ( 1 - x  \big )^{\beta - 1} \Big | \leqslant K x,
			\end{equation*}
			for some constant $K$ and all $x \in [0,1/2]$, say. Then, for $n$ sufficiently large so that $\sigma_n^{1/2} \leqslant \sigma_n^{} / 2$, we can bound the right-hand side as follows.
			\begin{equation*}
				9 \varepsilon^{-2} i^2 T
				\int_0^{\sigma_n^{1/2}} u^{\alpha - 1}  \Big |1 - \Big ( 1 - \frac{u}{\sigma_n^{}}   \Big )^{\beta - 1}  \Big | \dd u
				\leqslant
				9 \varepsilon^{-2} i^2 T K \sigma_n^{-1} \int_0^{\sigma_n^{1/2}} u^\alpha \dd u = O\big (\sigma_n^{(\alpha - 1)/2} \big ),
			\end{equation*}
			which shows that
			\begin{equation} \label{mdelta<estimate}
				\PP{\sup_{t \in [0,T]} |M_{t,i}^{\Delta,<}| \geqslant \varepsilon / 3 } = O \big (\sigma_n^{(\alpha -  1)/2} \big ).
			\end{equation}
			Next, we deal with $M_{t,i}^{\Delta, \geqslant}$. We decompose
			\begin{equation*}
				M_{t,i}^{\Delta, \geqslant} 
				=
				-i \int_{[0,t) \times [\sigma_n^{1/2} ,\sigma_n^{})} u \cs_{s,i}^{}  N_n^\Delta (\dd s, \dd u) 
				+
				i \int_{[0,t) \times [\sigma_n^{1/2} ,\sigma_n^{})} u \cs_{s,i}^{} \mu_n^\Delta (\dd s, \dd u).
			\end{equation*}
			On the complement of the event
			\begin{equation*}
				A_{n, T} \defeq \big  \{ (\Ns_n' \cup N_n^\Delta) \cap [0,T) \times [\sigma_n^{1/2},\infty) \neq \varnothing  \big \},
			\end{equation*}
			the first integral vanishes, and after substituting $v = \sigma_n^{-1} u$, the second one is bounded by
			\begin{equation*}
				i T \sigma_n^{\alpha - 1}  \int_{0}^{1/2} v^{\alpha - 2}  \big | 1 -  ( 1 - v)^{\beta - 1}  \big |  \dd s \dd u + i T \sigma_n^{\alpha - 1}  \int_{1/2}^{1} v^{\alpha - 2}  \big | 1 -  ( 1 - v)^{\beta - 1}  \big |  \dd s \dd u .
			\end{equation*}
			Here, the first integral is of order $O(\sigma_n^{\alpha -1})$ because
			$|1 - (1-v)^{\beta - 1}| \leqslant K v$ on $[0,1/2)$ and the second one 
			is of order $O(\sigma_n^{\alpha -1})$ since
			\begin{equation} \label{largevestimate}
				\big | 1 -  ( 1 - v)^{\beta - 1}  \big | 
				\leqslant
				2 + 2 (1-v)^{\beta - 1} \quad \textnormal{for all } v \in (1/2,1).
			\end{equation}
			We have thus shown that
			\begin{equation*}
				\limsup_{n \to \infty} \PP{\sup_{t \in [0,T]} |M_{t,i}^{\Delta, \geqslant} | \geqslant \varepsilon / 3}   
				\leqslant
				\limsup_{n \to \infty} \PP{A_{n,T}}.
			\end{equation*}
			A straightforward calculation shows that $\PP{A_{n,T}}\to 0$ as $n \to \infty$. 
			Indeed, for any Poisson random variable $Z$ with parameter $\lambda$, one has $\PP{X \geqslant 1} = 1 - \ee^{-\lambda} \leqslant \lambda$. Hence,
			\begin{equation*}
				\begin{split}
					&\PP{A_{n,T}} \leqslant \PP{\Ns_n' \cap \big ( [0,T) \times [\sigma_n^{1/2},\infty) \big ) \neq \varnothing}
					+
					\PP{N_n^\Delta \cap \big ( [0,T) \times [\sigma_n^{1/2},\infty) \big ) \neq \varnothing} \\
					& \quad \leqslant
					T \int_{\sigma_n^{1/2}}^\infty u^{\alpha - 3} \dd u 
					+
					T \int_{\sigma_n^{1/2}}^\infty 
					\left | \Big ( 1 - \frac{u}{\sigma_n^{}}    \Big )^{\beta - 1} - 1  \right | u^{\alpha-3}  \1_{(0,\sigma_n^{})} (u)  \dd u.
				\end{split}
			\end{equation*}
			The first integral obviously vanishes as $n \to \infty$. After substituting $v = u \sigma_n^{-1}$, the second integral reads
			\begin{equation*}
				\sigma_n^{\alpha - 2} \int_{\sigma_n^{-1/2}}^1 
				| (1-v)^{\beta - 1} - 1 | v^{\alpha - 3} \dd v.
			\end{equation*}
			As previously, we use for $v \leqslant 1/2$ that 
			$|(1-v)^{\beta - 1} - 1| \leqslant K v$ for some $K$, while for 
			$v \in (1/2,1]$ we use the estimate in Eq.~\eqref{largevestimate}. 
			This shows that for some (different) constant $K$, 
			\begin{equation*}
				\sigma_n^{\alpha - 2} \int_{\sigma_n^{-1/2}}^1 
				| (1-v)^{\beta - 1} - 1 | v^{\alpha - 3} \dd v
				\leqslant K \sigma_n^{\alpha - 2} \sigma_n^{(1-\alpha)/2} \to 0
				\quad \textnormal{as } n \to \infty,
			\end{equation*}
			where we absorbed some additive constants into $K$. 
			
			Analogously, one can show that
			\begin{equation*}
				\lim_{n \to \infty} \PP{\sup_{t \in [0,T]} |M_{t,i}' | \geqslant \varepsilon / 3}   
				=
				\lim_{n \to \infty} \PP{A_{n,T}} 
				=
				0.
			\end{equation*}
			Together with Eq.~\eqref{mdelta<estimate}, we finally obtain
			\begin{equation*}
				\begin{split}
					&\lim_{n \to \infty} \PP{\sup_{t \in [0,T]} |\sigma_n^{} M_{t,i}^n -i  M_{t,i}^{}| \geqslant \varepsilon } \\
					& \quad \leqslant 
					\lim_{n \to \infty} 
					\Bigg (
					\PP{\sup_{t \in [0,T]} |M_{t,i}'| \geqslant \varepsilon / 3 } 
					+
					\PP{\sup_{t \in [0,T]} |M_{t,i}^{\Delta,<}| \geqslant \varepsilon / 3 }
					+
					\PP{\sup_{t \in [0,T]} |M_{t,i}^{\Delta,\geqslant}| \geqslant \varepsilon / 3}
					\Bigg ) \\
					& \quad =
					0.
				\end{split}
			\end{equation*}
		\end{proof}
		
		We need another a-priori estimate for the size of the fluctuations.
		\begin{lemma} \label{lem:fluctuationsaresmallenough}
			For all $1 \leqslant i \leqslant d$ and $\varepsilon, T > 0$ we have
			\begin{equation*}
				\lim_{n \to \infty} \PP{\sup_{t \in [0,T]} \sigma_n^{} (C_{t,i}^n - \cs_{t,i}^{})^2 \geqslant \varepsilon} = 0.
			\end{equation*}
		\end{lemma}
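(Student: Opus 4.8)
The plan is to re-run the Grönwall argument from the proof of Theorem~\ref{thm:lln}, but this time carrying along the extra factor $\sigma_n^{}$ and exploiting that $\sigma_n^{} n^{\alpha - 1} \to 0$. Since $|C_{t,i}^n - \cs_{t,i}^{}| \leqslant D_t^n$ with $D_t^n$ as in~\eqref{Ddef}, it suffices to show that $\PP{\sup_{t \in [0,T]} \sigma_n^{} (D_t^n)^2 \geqslant \varepsilon} \to 0$, and then invoke a union bound over $i$.

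Starting from the representation~\eqref{decomp} and the definition of $\cs$, I would write
\[
C_{t,i}^n - \cs_{t,i}^{} = \int_0^t \big(F_i^n(\bC_s^n) - F_i^{}(\bC_s^n)\big)\,\dd s + \int_0^t \big(F_i^{}(\bC_s^n) - F_i^{}(\cs_s^{})\big)\,\dd s + \widehat M_{t,i}^n.
\]
By Lemma~\ref{lem:uniformapproximation} the first integral is bounded in absolute value by $K n^{\alpha - 1}$ uniformly in $t \in [0,T]$, and since $F_i^{}$ is a smooth function on the compact simplex containing $E^n$ and the trajectory $\cs$, the integrand of the second integral is bounded by $K D_s^n$. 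Setting $R_n^{} \defeq K n^{\alpha - 1} + \max_{1 \leqslant i \leqslant d} \sup_{t \in [0,T]} |\widehat M_{t,i}^n|$, a random variable not depending on $t$, one obtains $D_t^n \leqslant R_n^{} + K \int_0^t D_s^n\,\dd s$ for all $t \in [0,T]$, so Grönwall's inequality gives $\sup_{t \in [0,T]} D_t^n \leqslant R_n^{}\,\ee^{KT}$.

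It then remains to check that $\sigma_n^{} R_n^2 \to 0$ in probability, which I would do via $L^1$-convergence: using $(a+b)^2 \leqslant 2a^2 + 2b^2$, Doob's $L^2$-maximal inequality and Lemma~\ref{lem:apriorimartingaleestimate},
\[
\EE{\sigma_n^{} R_n^2} \leqslant 2 K^2 \sigma_n^{} n^{2\alpha - 2} + 2 \sigma_n^{} \sum_{i=1}^d \EE{\sup_{t \in [0,T]} (\widehat M_{t,i}^n)^2} \leqslant 2 K^2 \sigma_n^{} n^{2\alpha - 2} + 8 \sigma_n^{} \sum_{i=1}^d \EE{(\widehat M_{T,i}^n)^2} = O(\sigma_n^{} n^{\alpha - 1}).
\]
Since $\sigma_n^{} = n^{(1-\alpha)/(2-\alpha)}$, one has $\sigma_n^{} n^{\alpha - 1} = n^{-(1-\alpha)^2/(2-\alpha)} \to 0$ (and a fortiori $\sigma_n^{} n^{2\alpha - 2} \to 0$), so $\EE{\sigma_n^{} R_n^2} \to 0$. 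Finally, by Markov's inequality,
\[
\PP{\sup_{t \in [0,T]} \sigma_n^{} (D_t^n)^2 \geqslant \varepsilon} \leqslant \PP{\sigma_n^{} R_n^2 \geqslant \varepsilon\,\ee^{-2KT}} \leqslant \varepsilon^{-1} \ee^{2KT}\,\EE{\sigma_n^{} R_n^2} \to 0,
\]
which proves the lemma. The argument is essentially routine given Lemmas~\ref{lem:uniformapproximation} and~\ref{lem:apriorimartingaleestimate}; the only delicate point is the bookkeeping of exponents confirming that the scaling $\sigma_n^{}$ is slow enough, i.e. $\sigma_n^{} n^{\alpha - 1} \to 0$, which is precisely why the fluctuation exponent in Theorem~\ref{thm:fclt} is $(1-\alpha)/(2-\alpha)$ and not $1-\alpha$.
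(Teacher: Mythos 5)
Your proof is correct and follows essentially the same route as the paper: re-run the Grönwall argument from the proof of Theorem~\ref{thm:lln} with the extra $\sigma_n^{}$ factor, control the martingale part via Doob's inequality together with Lemma~\ref{lem:apriorimartingaleestimate}, and close by checking that $\sigma_n^{} n^{\alpha-1} = n^{-(1-\alpha)^2/(2-\alpha)} \to 0$. Your $L^1$-convergence plus Markov presentation is a slight streamlining of the paper's two-threshold ($\varepsilon$, $\varepsilon'$) bookkeeping, but the underlying argument and exponent calculation are identical.
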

		\begin{proof}
			Following the proof of Theorem~\ref{thm:lln} after Eq.~\eqref{Ddef}
			\begin{equation*}
				\sup_{t \in [0,T]} \sigma^{1/2}_n D_t^n  \leqslant K \int_0^T \sigma_n^{1/2} D_s^n \dd s 
				+
				\max_{1 \leqslant i \leqslant d} \sigma_n^{1/2} |\widehat M_{T,i}^n| + O(n^{\alpha - 1 + \frac{1-\alpha}{4 - 2\alpha}})
			\end{equation*}
			and thus, for an appropriate $\varepsilon' > 0$ by Grönwall's inequality
			\begin{equation*}
				\PP{\sup_{t \in [0,T]}\sigma_n^{1/2} D_t^n \geqslant \varepsilon} 
				\leqslant
				(\varepsilon')^{-2} \max_{1 \leqslant i \leqslant d} \sigma_n^{} \EE{(\widehat M_{T,i}^n)^2} +  O(n^{\alpha - 1 + \frac{1-\alpha}{4 - 2\alpha}}) 
				=   O(n^{\alpha - 1 + \frac{1-\alpha}{2 - \alpha}}),
			\end{equation*}
			which holds for $n$ large enough and the result follows by taking $n \to \infty$.
		\end{proof}
		Now, we just have to put the pieces together.
		\begin{proof}[Proof of Theorem~\ref{thm:fclt}]
			Recall the decomposition from \eqref{decomp}
			\begin{equation*}
				C_{t,i}^n = \delta_{i,1} +  \int_0^t F_i^n (\bC_s^n) \dd s +  \widehat M_{t,i}^n 
			\end{equation*}
			and that $U_{t,i}^n = \sigma_n^{} \big ( C_{t,i}^n   - \cs_{t,i}^{}    \big )$ by definition. Recall that $\cs_i^{}$ satisfies the ordinary differential equation $\frac{\dd}{\dd t} \cs_i^{} = F_i (\cs)$  (see Eq.~\eqref{eq:system_ODE}) with initial conditional $\cs_{0,i}^{} = \delta_{i,1}$. Hence, $\cs_i^{}$ satisfies (for all $t \geqslant 0$) the integral equation
			\begin{equation*}
				\cs_{t,i}^{} = \delta_{i,1} + \int_0^t F_i (\cs_s^{}) \dd s.
			\end{equation*}
			Together with the above, this implies that
			\begin{equation*}
				U_{t,i}^n = \sigma_n^{} \int_0^t F_i^n (\bC_s^n) - F_i^{} (\cs_s^{}) \dd s + \sigma_n^{} \widehat M_{t,i}^n
			\end{equation*}
			and therefore, recalling the definition of $U_{t,i}$ given in 
			Theorem~\ref{thm:fclt},
			\
			\begin{equation*}
				U_{t,i} = \int_0^t \langle \nabla F_i^{} (\cs_s^{}), \bU_s^{} \rangle \dd s +  M_{t,i},
			\end{equation*}
			we see that
			\begin{equation} \label{eq:ß38j94f}
				\begin{split}
					U_{t,i}^n - U_{t,i}^{}&=
					%\sigma_n^{} \int_0^t F_i^{} (\bC_s^n) - F_i^{} (\cs_s^{}) \dd s + \sigma_n^{} \int_0^t F_i^n (\bC_s^n) - F_i^{} (\bC_s^n) \dd s 
					%-
					%\int_0^t \langle \nabla F_i^{} (\cs_s^{}), \bU_s^{} \rangle \dd s \\ 
					%&\quad +\sigma_n^{} \widehat M_{t,i}^n  -  M_{t,i}^{} \\
					%&  = 
					\int_0^t \Big ( \sigma_n^{} \big ( F_i^{} (\bC_s^n) - F_i^{} (\cs_s^{}) \big ) - \langle \nabla F_i^{} (\cs_s^{}), \bU_s^{} \rangle  \Big ) \dd s 
					+ \sigma_n^{} \int_0^t F_i^n (\bC_s^n) - F_i^{} (\bC_s^n) \dd s  \\
					&\quad + \sigma_n^{} \widehat M_{t,i}^n - M_{t,i}^{}\\
					&  = 
					\int_0^t \Big ( \sigma_n^{} \big ( F_i^{} (\bC_s^n) - F_i^{} (\cs_s^{}) \big ) - \langle \nabla F_i^{} (\cs_s^{}), \bU_s^{} \rangle \Big ) \dd s  
					\\&\quad + \sigma_n^{} \widehat M_{t,i}^n -
					M_{t,i}^{} 
					+ 
					O \big (n^{\alpha - 1 + \frac{1-\alpha}{2-\alpha}} \big),
				\end{split}
			\end{equation}
			where we used in the last step that, by Lemma~\ref{lem:uniformapproximation} and recalling that
			$\sigma_n^{} = n^{\frac{1-\alpha}{2-\alpha}}$,
			\begin{equation*}
				\Big | \sigma_n^{} \int_0^t F_i^n (\bC_s^n) - F_i^{} (\bC_s^n) \dd s \Big |
				\leqslant
				tn^{\frac{1-\alpha}{2-\alpha}} \sup_{\bc \in E^n} \big | F_i^n (\bc) - F_i^{}(\bc)   \big | = tn^{\frac{1-\alpha}{2-\alpha}} O (n^{\alpha - 1}).
			\end{equation*}
			Because $F_i^{}$ is a polynomial (and thus smooth), it is an immediate consequence of Taylor's theorem that
			\begin{equation*}
				\sigma_n^{} \big ( F_i^{} (\bC_s^n) - F_i^{} (\cs_s^{}) \big ) = \langle \nabla F_i^{} (\cs_s^{}), \bU_s^n \rangle + \mathbf{G}_s^{}, 
			\end{equation*}
			where
			\begin{equation*}
				|\mathbf{G}_t^{}| \leqslant K \sigma_n^{} \|\bC_t^n - \cs_t \|_\infty^2, 
			\end{equation*}
			for some uniform constant $K$, where $\| . \|_\infty$ denotes the maximum over all $d$ components. 
			Combining this with Eq.~\eqref{eq:ß38j94f} yields
			\begin{equation*}
				\begin{split}
					\big | U_{t,i}^n - U_{t,i} \big | 
					&\leqslant
					\Big | \int_0^t \langle \nabla F_i (\cs_s^{}), \bU^n_s - \bU_s^{} \rangle 
					\dd s \Big |
					+
					\int_0^t \big | \mathbf{G}_s \big | \dd s +
					\big | \sigma_n^{} \hat M_{t,i}^n - M_{t,i}^{}   \big | 
					+
					O \big ( n^{\alpha - 1 + \frac{1-\alpha}{2-\alpha}} \big ) \\
					& \leqslant
					\int_0^t \big | \langle
					\nabla F_i(\cs_s^{}), \bU_s^n - \bU_s^{} \rangle  \big | \dd s 
					+
					K \int_0^t \sigma_n^{} \| C_s^n - \cs_s^{} \|_\infty^2 \dd s   
					+
					\big | \sigma_n^{} \hat M_{t,i}^n - M_{t,i}^{}   \big | 
					+
					O \big ( n^{\alpha - 1 + \frac{1-\alpha}{2-\alpha}} \big ) \\
					& \leqslant
					K \int_0^t \big |  \bU_s^n - \bU_s^{} \big | \dd s 
					+
					K \int_0^t \sigma_n^{} \| C_s^n - \cs_s^{} \|_\infty^2 \dd s    
					+
					\| \sigma_n^{} \hat \bM_{t}^n - \bM_{t}^{}   \|_\infty 
					+
					O \big ( n^{\alpha - 1 + \frac{1-\alpha}{2-\alpha}} \big ),
				\end{split}
			\end{equation*}
			
			\color{black}
			where we used in the third step that 
			$\nabla F_i$ is bounded. Note that the right-hand side does not depend on $i$. Taking on the left the maximum over all $i$ and
			setting
			$\Us_t^n \defeq \| \bU_t^n - \bU_t^{} \|_\infty$, we see that 
			\begin{equation*}
				\Us_t^n \leqslant K \int_0^t \Us_s^n \dd s 
				+ \sup_{t,r \in [0,T]} K \Big (\|  \sigma_n^{} \widehat \bM_{t}^n - \bM_{t}^{}\|_\infty +  \sigma_n^{} \|\bC_r^n - \cs_r \|_\infty^2 +  n^{-\varrho}    \Big ) 
				+ O(n^{\alpha - 1 + \frac{1 - \alpha}{2 - \alpha}}) 
			\end{equation*}
			with $\varrho \defeq \alpha - 1 + \frac{1-\alpha}{2 - \alpha}$.
			%\flo{There seems to be a problem in bounding the first integral to obtain this bound... We roughly have
				%\begin{align}
				%    \EE{ \sup_{t \leqslant T} | \sigma_n \int_0^t F_i^+ (C_s^n) - F_i^+ (\cs_s^{}) \dd s |} &\leqslant \EE{ \sigma_n \int_0^T | F_i^+ (C_s^n) - F_i^+ (\cs_s^{}) | \dd s } \\
				%    &\leqslant  \EE{ \sigma_n \int_0^T |C_s^n - \cs| ||(F^+)' ||_\infty \dd s } \\
				%    & \leqslant K \sigma_n \int_0^T \EE{ |C_s^n - \cs| } \dd s \\
				%    &\leqslant K \sigma_n \int_0^T \EE{ |C_s^n - \cs|^2 }^{\frac{1}{2}} \dd s
				%    = O(n^{\frac{1-\alpha}{2-\alpha}} n^{\frac{\alpha-1}{2}}),
				%\end{align}
				%where we need to use Jensen to apply the $L_2$ bound. Note that the above does not go to $0$!!
				%}
			Clearly, $\Us_t^n \leqslant 2 \sigma_n^{} + $
			By Grönwall's inequality, we see that for sufficiently large $n$
			\begin{equation*}
				\PP{\sup_{t \in [0,T]} \Us_t^n \geqslant \varepsilon} \leqslant 
				\sum_{i=1}^d \PP{ \sup_{t,r \in [0,T]} K \Big (| \sigma_n^{} \widehat M_{t,i}^n - M_{t,i}^{}| + \sigma_n^{} \|\bC_r^n - \cs_r \|_\infty^2     \Big )  \geqslant \frac{\varepsilon}{2 + 2\ee^{KT}}}.
			\end{equation*}
			By Lemmas~\ref{lem:replacetheintegrand},~\ref{lem:replacingthenoise} and~\ref{lem:fluctuationsaresmallenough}, the right-hand side goes to $0$ as $n \to \infty$.
		\end{proof}
		
		\section{Some auxiliary results} \label{sect:calculus}
		
		In the next two lemmas, we provide approximation results for certain integrals that appear throughout the manuscript.
		
		\begin{lemma} \label{lem:integralestimate1}
			For all $\theta \in \Rb$, $\alpha \in (0,1)$ and all $k \geqslant 2$, 
			\begin{equation*}
				\int_0^1 u^{k + \alpha - 3} (1 - u)^{n + \theta} \dd u = n^{2 - \alpha - k} \Gamma(k+ \alpha - 2) + O(n^{1-\alpha-k}).
			\end{equation*}
			as $n \to \infty$.
		\end{lemma}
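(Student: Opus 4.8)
The plan is to recognise the left-hand side as a Beta integral and then reduce the claim to the standard asymptotics of a ratio of Gamma functions. Since $k \geqslant 2$ and $\alpha \in (0,1)$ we have $k + \alpha - 3 > -1$, so for every $n$ with $n + \theta + 1 > 0$ the integral converges, and by the Euler integral representation of the Beta function,
\begin{equation*}
	\int_0^1 u^{k+\alpha-3}(1-u)^{n+\theta}\,\dd u = B(k+\alpha-2,\, n+\theta+1) = \frac{\Gamma(k+\alpha-2)\,\Gamma(n+\theta+1)}{\Gamma(n+\theta+k+\alpha-1)}.
\end{equation*}
Note that $k + \alpha - 2 \geqslant \alpha > 0$, so $\Gamma(k+\alpha-2)$ is a finite positive constant, and $n+\theta+k+\alpha-1 = (n+\theta+1)+(k+\alpha-2) > 0$ for $n$ large, so the right-hand side is well defined.

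It then remains to control $\Gamma(n+\theta+1)/\Gamma(n+\theta+k+\alpha-1)$ as $n \to \infty$. I would invoke the standard expansion $\Gamma(x+a)/\Gamma(x+b) = x^{a-b}\bigl(1 + O(x^{-1})\bigr)$ as $x \to \infty$ (a consequence of Stirling's formula), applied with $x = n$, $a = \theta+1$ and $b = \theta + k + \alpha - 1$, so that $a - b = 2 - \alpha - k$. This gives
\begin{equation*}
	\frac{\Gamma(n+\theta+1)}{\Gamma(n+\theta+k+\alpha-1)} = n^{2-\alpha-k}\bigl(1 + O(n^{-1})\bigr) = n^{2-\alpha-k} + O(n^{1-\alpha-k}),
\end{equation*}
and multiplying by the constant $\Gamma(k+\alpha-2)$ yields exactly the claimed expansion.

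An alternative, more self-contained route avoids quoting the Gamma-ratio expansion: substituting $u = s/n$ turns the integral into $n^{2-\alpha-k}\int_0^n s^{k+\alpha-3}(1-s/n)^{n+\theta}\,\dd s$; since $(1-s/n)^{n+\theta}\to \ee^{-s}$ pointwise and is dominated (for $n$ large) by an integrable multiple of $\ee^{-s/2}s^{k+\alpha-3}$ near both endpoints (integrability at $0$ uses $k+\alpha-3>-1$), dominated convergence yields the leading term $n^{2-\alpha-k}\Gamma(k+\alpha-2)$. Neither route presents a serious obstacle; the only point needing a little care is the quantitative $O(n^{1-\alpha-k})$ error. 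In the first route it is immediate from the next-order term in the Gamma-ratio asymptotics, while in the second route one would have to estimate $\ee^{s}(1-s/n)^{n+\theta}-1$ via $\log(1-s/n) = -s/n + O(s^2/n^2)$, which is where most of the bookkeeping would sit. I would therefore favour the first route for brevity.
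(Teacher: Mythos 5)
Your proof is correct and follows essentially the same route as the paper: recognise the Euler Beta integral and invoke the standard Gamma-ratio asymptotics $\Gamma(x+a)/\Gamma(x+b)=x^{a-b}(1+O(x^{-1}))$. In fact your Beta identity has the correct denominator $\Gamma(n+\theta+k+\alpha-1)$, whereas the paper's displayed version contains a typo ($\alpha-2$ in place of $\alpha-1$ there) that is only implicitly corrected when the ratio asymptotics from Abramowitz--Stegun 6.1.47 are applied.
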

		\begin{proof}
			Since $k\geqslant 2$, we have $k+\alpha-3\geqslant -1$, hence by the definition of the Beta-function
			\begin{align*}
				\int_0^1 u^{k + \alpha - 3} (1 - u)^{n + \theta} \dd u = \frac{\Gamma(k+\alpha-2) \Gamma(n+\theta+1)}{\Gamma(n+\theta+k+\alpha-2)}.
			\end{align*}
			We note that by 6.1.47 in \cite{Abramowitz1972} it holds
			\begin{align*}
				\frac{\Gamma(n+\theta+1)}{\Gamma(n+\theta+k+\alpha-2)} (n+\theta)^{k+\alpha-2} = 1+ O(n^{-1}),
			\end{align*}
			and we arrive at
			\begin{align*}
				\int_0^1 u^{k + \alpha - 3} (1 - u)^{n + \theta} \dd u &= \Gamma(k+\alpha-2) (n+\theta)^{2-\alpha-k} (1+O(n^{-1})) \\
				&=\Gamma(k+\alpha-2) n^{2-\alpha-k} (1+O(n^{-1})).
			\end{align*}
		\end{proof}
		
		\begin{lemma} \label{lem:integralestimate2}
			For all $\theta_1^{} \in (-1, \infty), \theta_2^{} \in \Rb$ and $\alpha \in (0,1)$,
			\begin{equation*}
				\int_0^1 u^{\alpha - 2} (1 - u)^{\theta_1^{}}  \big ( 1 - (1 - u)^{n + \theta_2^{}}        \big ) \dd u 
				=
				n^{1 - \alpha} \frac{\Gamma(\alpha)}{1 - \alpha} + O(1),
			\end{equation*}
			as $n \to \infty$. 
		\end{lemma}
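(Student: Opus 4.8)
The plan is to reduce the integral to a Beta function by a single integration by parts and then quote Lemma~\ref{lem:integralestimate1}.

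First I would dispose of the factor $(1-u)^{\theta_1}$. Since $u^{\alpha-2}$ is not integrable at $0$, one cannot simply expand $(1-u)^{\theta_1}\big(1-(1-u)^{n+\theta_2}\big) = (1-u)^{\theta_1} - (1-u)^{\theta_1+n+\theta_2}$ into two separately non-integrable pieces; instead I write $(1-u)^{\theta_1} = 1 + \big((1-u)^{\theta_1} - 1\big)$ and split the integral accordingly. The correction term $\int_0^1 u^{\alpha-2}\big((1-u)^{\theta_1}-1\big)\big(1-(1-u)^{n+\theta_2}\big)\,\dd u$ is $O(1)$: once $n+\theta_2 > 0$ one has $0 \leqslant 1-(1-u)^{n+\theta_2} \leqslant 1$, so its integrand is dominated, uniformly in $n$, by $u^{\alpha-2}\,|(1-u)^{\theta_1}-1|$, and this envelope is integrable on $(0,1)$ because it behaves like $|\theta_1|\,u^{\alpha-1}$ near $0$ (mean value theorem) and like a constant times $(1-u)^{\theta_1}$ near $1$, with $\alpha > 0$ and $\theta_1 > -1$.

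It then remains to handle $\int_0^1 u^{\alpha-2}\big(1-(1-u)^{m}\big)\,\dd u$ with $m := n + \theta_2$. Here I would integrate by parts using $u^{\alpha-2}\,\dd u = \dd\!\big(u^{\alpha-1}/(\alpha-1)\big)$: the boundary term at $u=1$ equals $1/(\alpha-1)$, the boundary term at $u=0$ vanishes since $u^{\alpha-1}\big(1-(1-u)^m\big) = O(u^{\alpha}) \to 0$, and what remains is $-\tfrac{m}{\alpha-1}\int_0^1 u^{\alpha-1}(1-u)^{m-1}\,\dd u$. Now Lemma~\ref{lem:integralestimate1}, applied with $k=2$ and $\theta = \theta_2 - 1$ (valid for $n$ large, so that $m-1 > -1$), gives $\int_0^1 u^{\alpha-1}(1-u)^{m-1}\,\dd u = n^{-\alpha}\Gamma(\alpha) + O(n^{-1-\alpha})$. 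Multiplying by $-m/(\alpha-1) = (n+\theta_2)/(1-\alpha)$ produces $\tfrac{\Gamma(\alpha)}{1-\alpha}\, n^{1-\alpha} + O(1)$, and adding the $O(1)$ boundary term together with the $O(1)$ correction term from the previous paragraph yields the claim.

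I do not expect a genuine obstacle here; the only step requiring a little care is the initial splitting, where one must subtract off the constant $1$ rather than split $(1-u)^{\theta_1}\big(1-(1-u)^{n+\theta_2}\big)$ naively, so that the remainder is controlled by an $n$-independent integrable envelope. Everything else is a one-line integration by parts and a direct appeal to Lemma~\ref{lem:integralestimate1}.
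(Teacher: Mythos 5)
Your proof is correct, and it follows a genuinely different route from the one in the paper. The paper splits the range at $u = 1/2$, substitutes $u \mapsto u/n$ on the inner piece, approximates $(1 - u/n)^n$ by $\ee^{-u}$ (relying on the auxiliary Lemma~\ref{lem:approximatingtheexponential}), and then identifies $\int_0^\infty u^{\alpha-2}(1-\ee^{-u})\,\dd u = \Gamma(\alpha)/(1-\alpha)$ by integration by parts. You instead peel off the $n$-independent weight $(1-u)^{\theta_1}$ by writing it as $1 + \big((1-u)^{\theta_1}-1\big)$, observe that the correction integrand is dominated by the $n$-free integrable envelope $u^{\alpha-2}\,|(1-u)^{\theta_1}-1|$ (integrable because of the $O(u^{\alpha-1})$ behaviour near $0$, using $\alpha>0$, and the $O\big((1-u)^{\theta_1}\big)$ behaviour near $1$, using $\theta_1>-1$), and then hit the main piece with a single integration by parts to reduce it to a Beta integral handled by Lemma~\ref{lem:integralestimate1} with $k=2$ and $\theta=\theta_2-1$. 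The steps all check out: the boundary term at $u=0$ vanishes as $O(u^\alpha)$, the one at $u=1$ is the constant $1/(\alpha-1)$, and the remaining factor $m/(1-\alpha)\cdot\big(n^{-\alpha}\Gamma(\alpha)+O(n^{-1-\alpha})\big)$ with $m=n+\theta_2$ produces precisely $\tfrac{\Gamma(\alpha)}{1-\alpha}n^{1-\alpha}+O(1)$. What your approach buys is economy: it bypasses Lemma~\ref{lem:approximatingtheexponential} entirely and reuses Lemma~\ref{lem:integralestimate1}, which the paper has already established, rather than re-deriving the $\Gamma(\alpha)/(1-\alpha)$ constant from scratch. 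The paper's substitution-and-Laplace-type argument is somewhat more self-contained and makes the appearance of the limiting density $u^{\alpha-2}(1-\ee^{-u})$ transparent, which is conceptually suggestive, but your version is shorter and equally rigorous.
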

		\begin{proof}
			We start by decomposing the integral as
			\begin{align}
				\int_0^{1/2}       u^{\alpha - 2} (1 - u)^{\theta_1^{}}  \big ( 1 - (1 - u)^{n + \theta_2^{}}        \big ) \dd u 
				+
				\int_{1/2}^1 u^{\alpha - 2} (1 - u)^{\theta_1^{}}  \big ( 1 - (1 - u)^{n + \theta_2^{}}        \big ) \dd u .  \label{eq:decomposition_int}
			\end{align}
			Clearly, since $ \big ( 1 - (1 - u)^{n + \theta_2^{}}        \big )  \leqslant 1$ for $n \geqslant -\theta_2^{}$, the second integral in \eqref{eq:decomposition_int} can be bounded uniformly in $n$. To deal with the first integral in \eqref{eq:decomposition_int},
			we employ the substitution $u \mapsto un$ and obtain
			\begin{equation*}
				\int_0^{1/2}       u^{\alpha - 2} (1 - u)^{\theta_1^{}}  \big ( 1 - (1 - u)^{n + \theta_2^{}}        \big ) \dd u 
				=
				n^{1 - \alpha} 
				\int_0^{n/2} u^{\alpha - 2} \Big ( 1 - \frac{u}{n}  \Big )^{\theta_1^{}} \Big ( 1 - \Big (1 - \frac{u}{n}          \Big )^{n + \theta_2^{}}           \Big ) \dd u.
			\end{equation*}
			We split the integral on the right-hand side into three parts
			\begin{equation} \label{decomposition2}
				\begin{split}
					&\int_0^{n/2} u^{\alpha - 2} \Big ( 1 - \frac{u}{n}  \Big )^{\theta_1^{}} \Big ( 1 - \Big (1 - \frac{u}{n}          \Big )^{n + \theta_2^{}}           \Big ) \dd u \\
					& \quad = 
					\int_0^{n/2} u^{\alpha - 2} \Big ( 1 - \frac{u}{n}  \Big )^{\theta_1^{}} ( 1 - \ee^{-u}) \dd u  +
					\int_0^{n/2} u^{\alpha - 2} \Big ( 1 - \frac{u}{n}  \Big  )^{\theta_1^{}} \Big ( \ee^{-u} - \Big ( 1 - \frac{u}{n}  \Big )^n             \Big ) \dd u \\ 
					& \quad \quad +
					\int_0^{n/2} u^{\alpha - 2} \Big ( 1 - \frac{u}{n}  \Big  )^{\theta_1^{} + n} \Big ( 1 - \Big (1 - \frac{u}{n}   \Big )^{\theta_2^{}}  \Big ) \dd u.
				\end{split}
			\end{equation}
			To proceed, note that the mean-value theorem implies that 
			\begin{equation} \label{meanvalue}
				\left | 1 - \Big (1 - \frac{u}{n}   \Big )^{\theta_2^{}}     \right | \leqslant K \frac{u}{n}
			\end{equation}
			for all $u \in [0, n/2]$ and some constant $K > 0$; here and in the following, $K$ will always denote a constant that may change its value from line to line. Thus, the third integral in Eq.~\eqref{decomposition2} can be bounded from above by
			\begin{equation*}
				K n^{-1} \int_0^{n/2} u^{\alpha - 1} \Big ( 1- \frac{u}{n}   \Big )^n \dd u \leqslant K n^{-1} \int_0^\infty u^{\alpha -1} \ee^{-u} \dd u = O(n^{-1}),
			\end{equation*}
			again for some constant $K$ (different from the one above).
			To bound the second integral, we use Lemma~\ref{lem:approximatingtheexponential} and see that
			\begin{equation*}
				\int_0^{n/2} u^{\alpha - 2} \Big ( 1 - \frac{u}{n}  \Big  )^{\theta_1^{}} \Big ( \ee^{-u} - \Big ( 1 - \frac{u}{n}  \Big )^n             \Big ) \dd u
				\leqslant
				Kn^{-1} \int_0^{\infty} u^\alpha \ee^{-u} \dd u = O(n^{-1}).
			\end{equation*}
			We further decompose the first integral in Eq.~\eqref{decomposition2} as follows
			\begin{equation*}
				\begin{split}
					& \int_0^{n/2} u^{\alpha - 2} \Big ( 1 - \frac{u}{n}  \Big )^{\theta_1^{}} ( 1 - \ee^{-u}) \dd u \\
					& \quad = \int_0^{\infty} u^{\alpha - 2} (1- \ee^{-u}) \dd u - \int_{n/2}^\infty u^{\alpha - 2} (1- \ee^{-u}) \dd u \\
					& \quad \quad + \int_0^{n/2} u^{\alpha - 2} \Big ( \Big ( 1 - \frac{u}{n} \Big)^{\theta_1^{}}  - 1   \Big  ) ( 1 - \ee^{-u}) \dd u.
				\end{split}
			\end{equation*}
			The first integral is precisely $\Gamma(\alpha) / (1-\alpha)$, as can be seen by an elementary application of integration by parts. Bounding 
			$1 - \ee^{-u}$ in the second integral by $1$, we see that it is bounded in absolute value by $n^{\alpha - 1}$. For the third integral, we 
			once more apply Eq.~\eqref{meanvalue} to see that
			\begin{equation*}
				\int_0^{n/2} \left | u^{\alpha - 2} \Big ( \Big ( 1 - \frac{u}{n} \Big)^{\theta_1^{}}  - 1   \Big  ) ( 1 - \ee^{-u}) \right | \dd u
				\leqslant
				Kn^{-1} \int_0^{n/2} u^{\alpha - 1} \dd u
				= O(n^{\alpha  - 1}). 
			\end{equation*}
			
		\end{proof}
		\begin{lemma}\label{lem:approximatingtheexponential}
			Let $n \in \mathbb{N}$, then for all $u \in [0, \frac{n}{2}]$ it holds
			\begin{align*}
				\left  | \Big ( 1 - \frac{u}{n}   \Big )^n   - \ee^{-u}           \right | \leqslant 2 n^{-1} u^2 e^{-u}.
			\end{align*}
		\end{lemma}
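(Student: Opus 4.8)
The statement is a routine calculus estimate, so the plan is a direct comparison of $\big(1-\tfrac{u}{n}\big)^n$ with $\ee^{-u}$ through their logarithms, with the only care being that the restriction $u\leqslant n/2$ keeps $1-\tfrac{u}{n}$ bounded away from $0$.

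First I would set $\varphi_n(u)\defeq n\log\!\big(1-\tfrac{u}{n}\big)+u$ for $u\in[0,n/2]$, so that $\big(1-\tfrac{u}{n}\big)^n=\ee^{-u}\,\ee^{\varphi_n(u)}$. The elementary inequality $\log(1-x)\leqslant -x$ gives $\varphi_n(u)\leqslant 0$ throughout, and $\varphi_n(0)=0$. Differentiating, $\varphi_n'(u)=-\tfrac{1}{1-u/n}+1=-\tfrac{u/n}{1-u/n}$, and since $1-\tfrac{u}{n}\geqslant \tfrac12$ on $[0,n/2]$ one obtains $|\varphi_n'(u)|\leqslant \tfrac{2u}{n}$. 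Integrating from $0$ to $u$ then yields $|\varphi_n(u)|\leqslant \tfrac{u^2}{n}$.

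Next I would invoke the bound $|\ee^{x}-1|\leqslant |x|$ valid for $x\leqslant 0$ (indeed $\ee^{t}\leqslant 1$ for $t\leqslant 0$, so $|\ee^{x}-1|=\int_x^0\ee^{t}\,\dd t\leqslant -x$). Applying this with $x=\varphi_n(u)\leqslant 0$ gives
\begin{equation*}
	\Big|\Big(1-\tfrac{u}{n}\Big)^n-\ee^{-u}\Big|=\ee^{-u}\,\big|\ee^{\varphi_n(u)}-1\big|\leqslant \ee^{-u}\,|\varphi_n(u)|\leqslant \frac{u^2}{n}\,\ee^{-u}\leqslant \frac{2u^2}{n}\,\ee^{-u},
\end{equation*}
which is the claim (in fact with the slightly sharper constant $1$ in place of $2$).

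There is no genuine obstacle here; the one point to watch is that the estimate $|\varphi_n'(u)|\leqslant \tfrac{2u}{n}$ — equivalently, the control of the tail $\sum_{k\geqslant 2}\tfrac{1}{k}(u/n)^{k}$ of $-\log(1-u/n)+u$ — uses precisely $u/n\leqslant \tfrac12$, i.e. the hypothesis $u\in[0,n/2]$. An essentially equivalent route would be to sandwich $\ee^{-u-u^2/n}\leqslant \big(1-\tfrac{u}{n}\big)^n\leqslant \ee^{-u}$, the left-hand inequality being $\log(1-x)\geqslant -x-x^2$ on $[0,\tfrac12]$ (checked by noting the difference vanishes at $0$ and has nonnegative derivative $\tfrac{x(1-2x)}{1-x}$ there), and then to use $1-\ee^{-t}\leqslant t$ for $t\geqslant 0$ to conclude $\ee^{-u}-\big(1-\tfrac{u}{n}\big)^n\leqslant \ee^{-u}\big(1-\ee^{-u^2/n}\big)\leqslant \tfrac{u^2}{n}\ee^{-u}$.
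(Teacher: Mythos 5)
Your proof is correct, and it in fact yields the sharper constant $1$ in place of $2$. It takes a genuinely different route from the paper's: the paper works directly with the difference $f(u)\defeq \ee^{-u}-(1-u/n)^n\geqslant 0$, computes $f'(u)=(1-u/n)^{n-1}-\ee^{-u}$, uses $1-u/n\geqslant\tfrac12$ to get $f'(u)\leqslant 2\ee^{-u}u/n$, and then appeals to the mean value theorem with $f(0)=0$; you instead pass to the logarithmic difference $\varphi_n(u)=n\log(1-u/n)+u$, bound $|\varphi_n'(u)|\leqslant 2u/n$ by the same use of $1-u/n\geqslant\tfrac12$, integrate to get $|\varphi_n(u)|\leqslant u^2/n$, and exponentiate via $|\ee^{x}-1|\leqslant|x|$ for $x\leqslant 0$. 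Your formulation buys a real technical advantage: the paper's final step asserts $f(u)\leqslant u\sup_{0\leqslant v\leqslant u}f'(v)\leqslant 2\ee^{-u}u^2/n$, but the bound $\sup_{0\leqslant v\leqslant u}\ee^{-v}v\leqslant\ee^{-u}u$ requires $v\mapsto v\ee^{-v}$ to be nondecreasing on $[0,u]$, which holds only for $u\leqslant 1$; for larger $u$ that step needs an extra argument as written. By factoring out $\ee^{-u}$ \emph{before} estimating, your logarithmic approach sidesteps this monotonicity pitfall entirely and gives the bound uniformly on $u\in[0,n/2]$. Your alternate sandwich argument $\ee^{-u-u^2/n}\leqslant(1-u/n)^n\leqslant\ee^{-u}$ is also correct and essentially a repackaging of the same idea.
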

		\begin{proof}
			Note that $\Big( 1 - \frac{u}{n} \Big)^n \leqslant e^{-u} $, hence we have
			\begin{align*}
				0 \leqslant f(u) := e^{-u} - \Big( 1 - \frac{u}{n} \Big)^n.
			\end{align*}
			Then, for all $u \in [0,\frac{n}{2}]$
			\begin{align*}
				f'(u)&= \Big( 1 - \frac{u}{n} \Big)^{n-1}- e^{-u} = \frac{\left( 1 - \frac{u}{n} \right)^n - e^{-u} \left( 1- \frac{u}{n} \right) }{ \left( 1 - \frac{u}{n} \right)} \\
				&\leqslant 2 \left[  e^{-u} - e^{-u} \left( 1- \frac{u}{n} \right) \right] = 2 e^{-u} \frac{u}{n}.
			\end{align*}
			Finally, noting that $f(0)=0$, by the mean value theorem
			\begin{align*}
				f(u) \leqslant u \sup_{0 \leqslant v \leqslant u} f'(v) \leqslant 2 e^{-u} u^2 n^{-1},
			\end{align*}
			which finishes the proof.
		\end{proof}
		
		\subsection*{Acknowledgements}
		Frederic Alberti was funded by the Deutsche Forschungsgemeinschaft (DFG, German Research Foundation) --- Project-ID 519713930.
		
		\noindent Fernando Cordero was funded by the Deutsche Forschungsgemeinschaft (DFG, German Research Foundation) --- Project-ID 317210226 --- SFB 1283.

		\appendix
		\section{Derivation of identity \eqref{samplingformula1}} \label{app-bubble}
		In this section we explain the derivation of identity \eqref{samplingformula1}. 
		Recall that $S_0$ is a uniformly chosen subset of $[n]$ with $|S_0| = m_0^{}$.
		Fix $\bl \in \Nb^d$ with $\bl \leqslant \bc$.
		To determine 
		$\PP{\tla_t^n = \bl \, | \, \tC_t = \bc}$, we will count the number of realisations of $S_0$, which intersect, for each $i \in [n]$, precisely $\ell_i^{}$ components of size $i$. Put differently, we are asking how many ways there are to sprinkle $m_0^{}$ marks onto the underlying graph, so that, for each $i \in [n]$, exactly $\ell_i^{}$ components of size $i$ contain some marked vertex.
		
		The first step is to choose, for each 
		$i \in [n]$, $\ell_i^{}$ out of the $c_i^{}$ components that will receive at least one mark. Clearly, this can be done in
		\begin{equation}
			\label{binomialfactor}
			\prod_{i \in [n]} \binom{c_i^{}}{\ell_i^{}}
		\end{equation}
		different ways. Once we have fixed the components that should receive marks, we count the number of `sprinklings' that are compatible with this choice. For $i \in [n]$, denote the $\ell_i^{}$ chosen blocks of size $i$ by
		$A_{i,1},A_{i,2},\ldots,A_{i,\ell_i^{}}$.
		Given a compatible sprinkling $S_0$, we set for each $i \in [n]$
		and $j \in [\ell_i^{}]$ 
		\begin{equation*}
			f_{i,j}^{} \defeq 
			\# \big  \{k \in A_{i,j} : k \leqslant 
			\min(A_{i,j} \cap S_0) \big \}.
		\end{equation*}
		Note that $f_{i,j}^{} \in [i]$ for all 
		$i \in [n]$ and $j \in [\ell_i^{}]$. 
		
		For now, we regard the values of $f_{i,j}^{}$ as fixed and count all compatible realisations of $S_0$. More precisely, this means that we are counting the number of realisations of $S_0$ satisfying the following two conditions.
		\begin{enumerate}[label=(\roman*)]
			\item 
			Writing 
			$A_{i,j} \defeq 
			\{ v_1^{},\ldots,v_i^{}   \}$ 
			with
			$v_1^{} < \ldots < v_k^{}$, and letting
			$f \defeq f_{i,j}^{}$,
			we demand that $v_f^{}$ carry a mark, meaning that
			$v_f^{} \in S_0$.
			\item 
			On the other hand, smaller vertices do not carry any mark, hence
			$\{ v_1^{},\ldots,v_{f-1}^{} \} \cap S_0
			=
			\varnothing.
			$
		\end{enumerate}
		In how many ways can we distribute marks under the restrictions imposed by (i) and (ii)?
		In accordance with (i), we put a mark on the $f_{i,j}$-st vertex (in ascending order) in $A_{i,j}$, for each $i \in [n]$ and 
		$j \in [\ell_i^{}]$. This leaves 
		$m_0^{} - \ell_1^{} - \ldots - \ell_n^{}$
		marks yet to be distributed. But first note that condition (ii) excludes 
		\begin{equation*}
			\sum_{i=1}^n \sum_{j =1}^{\ell_i^{}} 
			\big ( f_{i,j} - 1 \big )
		\end{equation*}
		vertices from carrying any mark. Also, 
		$|\bl| = \ell_1^{} + \ldots + \ell_n^{}$ vertices already carry a mark due to (i). This leaves us with 
		$m_0^{} - |\bl|$ marks that can be freely sprinkled over 
		$| \bl | -  f_{\bl}^{}$ vertices with $f_{\bl}^{} \defeq 
		\sum_{i=1}^n \sum_{j=1}^{\ell_i^{}} f_{i,j}^{}$. 
		Obviously, this can be done in 
		\begin{equation*}
			\binom{ {|\bl| - f_{\bl}^{} } }{ {m_0^{} - |\bl|} }
		\end{equation*}
		different ways, giving the total number of  realisations of $S_0$ compatible with 
		$\tla_t^n = \bl$, any given choice of the $A_{i,j}$ and fixed values of
		$f_{i,j}^{}$.
		
		To finish our counting, it remains to multiply this with the binomial factor from \eqref{binomialfactor} reflecting the choice of the $A_{i,j}$ and to sum over all possible values for $f_{i,j}^{}$. After dividing by the total number of possible realisations, we obtain
		\begin{equation}
			\label{summingovereffs}
			\PP{\tla_t^{n}=\bl\mid \tC_t^n=\bc} 
			=
			\frac{
				\prod_{i \in [n]} \binom{c_i^{}}{\ell_i^{}}
				\sum \binom{|\bl| - f_{\bl}^{}}{m_0^{} - |\bl|}
			}{\binom{n}{m_0^{}}}.
		\end{equation}
		Recalling that $f_{i,j}^{}$ refers to an arbitrary vertex in $A_{i,j}^{}$, one of the $\ell_i^{}$ components chosen in the first step, it is clear that
		the sums in the numerator runs over all choices of
		$f_{i,j} \in [i]$ for $i \in [n]$ and 
		$j \in [\ell_i^{}] $.
		
		To write this more succinctly, let $(F_{i,j})_{i \in [n], j \in [\ell_i^{}]}$ be a family of independent random variables, where for fixed $i$, $F_{i,j}^{}$ is uniformly distributed on $[i]$ and we set $F_{\bl} \defeq \sum_{i=1}^n\sum_{j=1}^{\ell_i^{}} F_{i,j}$. With this, we can rewrite the sum in \eqref{summingovereffs} as
		\begin{equation}
			\label{sumasexpectation}
			\sum 
			\binom{\| \bl\|-f_{\bl}}{m_0^{} - |\bl|}
			=
			\prod_{i=1}^n i^{\ell_i^{}} 
			\EE{
				\binom{\|\bl\|-F_{\bl}}{m_0^{} - |\bl|}
			}.
		\end{equation}
		The factor $\prod_{i=1}^n i^{\ell_i^{}}$ reflects the fact that the uniform distribution puts a point mass $\prod_{i=1}^n i^{-\ell_i^{}}$ on each possible realisation of the $F_{i,j}$.
		Inserting \eqref{sumasexpectation} back into
		\eqref{summingovereffs} finally yields the desired formula \eqref{samplingformula1}.

		\bibliographystyle{alpha}
		\bibliography{bibfile}
		
	\end{document}